\newcommand\brefGM[1]{\ref{GM#1}}
\newcommand\refGM[1]{I.\brefGM{#1}}
\newcommand\prefGM[1]{(\brefGM{#1})}
\newcommand\trefGM[2]{\refGM{#1}~\prefGM{#2}}
\newcounter{enumitemp}
\newcommand\pref[1]{(\ref{#1})}
\newtheorem{thm}{Theorem}[section]
\newtheorem{theorem}[thm]{Theorem}
\newtheorem*{theorem*}{Theorem}
\newtheorem*{TheoremF}{Theorem F}
\newtheorem*{TheoremG}{Theorem G}
\newtheorem*{TheoremH}{Theorem H}
\newtheorem{lemma}[thm]{Lemma}
\newtheorem{corollary}[thm]{Corollary}
\newtheorem{proposition}[thm]{Proposition}
\newtheorem*{proposition*}{Proposition}
\theoremstyle{definition}
\newtheorem{definition}[thm]{Definition} 
\newtheorem*{defn*}{Definition}
\newtheorem{defns}[thm]{Definitions}
\newtheorem{ex}[thm]{Example}
\newtheorem{remark}[thm]{Remark}
\theoremstyle{remark}
\newcounter{remarks}
{\paragraph*{Remarks}\ skip
 \begin{list}{\arabic{remarks}. }{\usecounter{remarks}%
 \setlength{\leftmargin}{0in}%
 \setlength{\rightmargin}{0in}%
 \setlength{\labelsep}{0pt}%
 \setlength{\labelwidth}{0pt}%
 \setlength{\listparindent}{0pt}%
 }
}
{
\end{list}
}
\newcommand\from\colon
\newcommand\inv{{-1}}
\newcommand\subgroup{<}
\newcommand\infinity\infty
\newcommand\na{\text{na}}
\newcommand\supp{\text{supp}}
\newcommand\disjunion\amalg
\newcommand\act\curvearrowright
\DeclareMathOperator{\Fix}{Fix}
\DeclareMathOperator{\Int}{int}
\DeclareMathOperator\closure{cl}
\newcommand{\C}{{\mathcal C}}
\newcommand{\T}{{\mathbb T}}
\newcommand{\Out}{\mathsf{Out}}
\newcommand{\Aut}{\mathsf{Aut}}
\newcommand{\Stab}{\mathsf{Stab}}
\newcommand{\F}{\mathcal F}
\renewcommand\L{\mathcal L}
\def\B{\mathcal B}
\newcommand{\A}{\mathcal A}
\renewcommand\T{\mathcal T}
\newcommand{\fG} {f : G \to G}
\newcommand{\ti} {\tilde}
\newcommand{\iNp} {indivisible Nielsen path}
\newcommand{\eg}{EG}
\newcommand{\noneg}{NEG}
\renewcommand\neg\noneg
\newcommand{\wt}{\widetilde}
\newcommand{\ct}{CT}
\newcommand{\cts}{CTs}
\newcommand\marginparLee[1]{\marginpar{\tiny #1 --- Lee}}
\newcommand\BH{\cite{BestvinaHandel:tt}}
\newcommand\BookOne{\cite{BFH:TitsOne}}
\newcommand\BookTwo{\cite{BFH:TitsTwo}}
\newcommand\recognition{\cite{FeighnHandel:recognition}}
\newcommand\Axes{\cite{HandelMosher:axes}}
\newcommand\Intro{\cite{HandelMosher:SubgroupsIntro}}
\newcommand\PartOne{Part I \cite{HandelMosher:SubgroupsI}}
\newcommand\PartTwo{Part II \cite{HandelMosher:SubgroupsII}}
\newcommand\PartFour{Part IV \cite{HandelMosher:SubgroupsIV}}
\DeclareMathOperator\interior{int}
\newcommand\bdy\partial
\newcommand\intersect\cap
\newcommand\union\cup
\newcommand\<\langle
\renewcommand\>\rangle
\newcommand\meet\wedge
\newcommand\composed{\circ}
\newcommand\cross\times
\newcommand\restrict{\bigm |}
\newcommand\wh{\widehat}
\newcommand\inject\hookrightarrow
\DeclareMathOperator\Length{Length}
 \newcommand\surjection\twoheadrightarrow
\DeclareMathOperator\MCG{\mathcal{MCG}}
\newcommand{\Lambdapp}{\Lambda^+_\phi}
\newcommand{\Lambdapmp}{\Lambda^{\pm}_\phi}
\newcommand\sing{{\text{sing}}}
\newcommand\gen{{\text{gen}}}
\newcommand\ext{{\text{ext}}}
\newcommand\good{{\text{good}}}
\renewcommand\int{{\text{int}}}
\newcommand\un{\text{\tiny un}}
\newcommand\st{\text{\tiny st}}
\title{Subgroup decomposition in $\Out(F_n)$\\ Part III: Weak Attraction Theory}
\author{Michael Handel and Lee Mosher}
\begin{document}

\maketitle

\begin{abstract}
This is the third in a series of four papers, announced in \cite{HandelMosher:SubgroupsIntro}, that develop a decomposition theory for subgroups of $\Out(F_n)$.

In this paper, given $\phi \in \Out(F_n)$ and an attracting-repelling lamination pair for $\phi$, we study which lines and conjugacy classes in $F_n$ are weakly attracted to that lamination pair under forward and backward iteration of $\phi$ respectively. For conjugacy classes, we prove Theorem F from the research annoucement, which exhibits a unique vertex group system called the ``nonattracting subgroup system'' having the property that the conjugacy classes it carries are characterized as those which are not weakly attracted to the attracting lamination under forward iteration, and also as those which are not weakly attracted to the repelling lamination under backward iteration. For lines in general, we prove Theorem G which characterizes exactly which lines are weakly attracted to the attracting lamination under forward iteration and which to the repelling lamination under backward iteration. We also prove Theorem H which gives a uniform version of weak attraction of lines.

\end{abstract}

\newcommand\arXiv{arXiv}

\subsection*{Introduction}

Many results about the groups $\MCG(S)$ and $\Out(F_n)$ are based on dynamical systems. The Tits alternative (\BookOne, \BookTwo\ for $\Out(F_n)$; \cite{McCarthy:Tits} and \cite{Ivanov:subgroups} independently for $\MCG(S)$) says that for any subgroup $H$, either $H$ is virtually abelian or $H$ contains a free subgroup of rank~$\ge 2$, and these free subgroups are constructed by analogues of the classical ``ping-pong argument'' for group actions on topological spaces. Dynamical ping-pong arguments were also important in Ivanov's classification of subgroups of $\MCG(S)$ \cite{Ivanov:subgroups}. And they will be important in \PartFour\ where we prove our main theorem about subgroups of $\Out(F_n)$, Theorem~C stated in the Introduction~\Intro. 

Ping-pong arguments are themselves based on understanding the dynamics of an individual group element~$\phi$, particularly an analysis of attracting and repelling fixed sets of~$\phi$, of their associated basins of attraction and repulsion, and of neutral sets which are neither attracted nor repelled. The proofs in \cite{McCarthy:Tits,Ivanov:subgroups} use the action of $\MCG(S)$ on Thurston's space $\mathcal{PML}(S)$ of projective measured laminations on $S$. 

The proof of Theorem C in \PartFour\ will employ ping-pong arguments for the action of $\Out(F_n)$ on the \emph{space of lines} $\B = \B(F_n)$, which is just the quotient of the action of $F_n$ on the space $\wt \B$ of two point subsets of $\bdy F_n$. The basis of those ping-pong arguments will be \emph{Weak Attraction Theory} which, given $\phi \in \Out(F_n)$ and a dual lamination pair $\Lambda^\pm \in \L^\pm(\phi)$, addresses the following dynamical question regarding the action of $\phi$ on~$\B$: 
\begin{description}
\item[General weak attraction question:] Which lines $\ell \in \B$ are weakly attracted\index{weakly attracted} to $\Lambda_+$ under iteration of $\phi$? Which are weakly attracted to $\Lambda_-$ under iteration of $\phi^\inv$?  And which are weakly attracted to neither $\Lambda_+$ nor $\Lambda_-$?
\end{description}
To say that $\ell$ is weakly attracted to $\Lambda_+$ (under iteration of~$\phi$) means that $\ell$ is weakly attracted to a generic leaf $\lambda \in \Lambda_+$, that is, the sequence $\phi^k(\ell)$ converges in the weak topology to $\ell$ as $k \to +\infinity$. Note that this is independent of the choice of generic leaf of $\Lambda_+$, since all of them have the same weak closure, namely~$\Lambda_+$.

Our answers to the above question are an adaptation and generalization of many of the ideas and constructions found in The Weak Attraction Theorem 6.0.1 of \BookOne, which answered a narrower version of the question above, obtained by restricting to a lamination $\Lambda_+$ which is topmost in $\L(\phi)$ and to birecurrent lines. That answer was expressed in terms of the structure of an ``improved relative train track representative'' of~$\phi$.

In this paper we develop weak attraction theory to completely answer the general weak attraction question. Our theorems are expressed both in terms of the structure of a \ct\ representative of $\phi$, and in more invariant terms. The theory is summarized in Theorems F, G and~H, versions of which were stated earlier in \Intro; the versions stated here are more expansive and precise. Theorem~F focusses on periodic lines and on the nonattracting subgroup system; Theorems~G and~H are concerned with arbitrary lines. Each of these theorems has applications in \PartFour. 

\paragraph{The nonattracting subgroup system: Theorem F.} We first answer the general weak attraction question restricted to ``periodic'' lines in $\B$, equivalently circuits in marked graphs, equivalently conjugacy classes in $F_n$. The statement uses two concepts from \hbox{\PartOne}: \emph{geometricity} of general \eg\ strata (which was in turn based on geometricity of top \eg\ strata as developed in \BookOne), and \emph{vertex group systems}.


\begin{TheoremF}[Properties of the nonattracting subgroup system] For each rotationless $\phi \in \Out(F_n)$ and each $\Lambda^\pm \in \L^\pm(\phi)$ there exists a subgroup system $\A_\na(\Lambda^\pm)$, called the \emph{nonattracting subgroup system}, with the following properties:
\begin{enumerate}
\item\label{ItemThmCAnaVGS}
(Proposition \ref{PropVerySmallTree} \pref{ItemA_naNP}) $\A_\na(\Lambda^\pm)$ is a vertex group system.
\item\label{ItemThmCAnaGeom}
(Proposition~\ref{PropVerySmallTree}~\pref{ItemA_naNoNP})  $\Lambda^\pm$ is geometric if and only if 
$\A_\na(\Lambda^\pm)$ is not a free factor system.
\item\label{ItemThmCAnaIsAna}
(Corollary~\ref{CorNAWellDefined})
For each conjugacy class $c$ in $F_n$ the following are equivalent: \\
$\bullet$\quad $c$ is not weakly attracted to $\Lambda^+_\phi$ under iteration of $\phi$; \\
$\bullet$\quad $c$ is carried by $\A_\na(\Lambda^\pm)$.
\item\label{ItemThmFAnaWellDef}
(Corollary~\ref{CorPMna}) $\A_\na(\Lambda^\pm)$ is uniquely determined by items~\pref{ItemThmCAnaVGS} and~\pref{ItemThmCAnaIsAna}.
\item\label{ItemThmCAnaPM}
(Corollaries \ref{CorNAWellDefined} and \ref{CorNAIndOfPM}) For each conjugacy class $c$ in $F_n$, $c$ is not weakly attracted to $\Lambda^+$ by iteration of $\phi$ if and only if $c$ is not weakly attracted to $\Lambda^-$ by iteration of~$\phi^\inv$.
\end{enumerate}
\end{TheoremF}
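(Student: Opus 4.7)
The plan is to begin by giving a concrete construction of $\A_\na(\Lambda^\pm)$ in terms of a \ct{} representative $\fG$ of $\phi$. Let $H_r$ be the \egs{} whose associated attracting lamination is $\Lambda^+$. I would define a certain ``nonattracting subgraph'' $Z \subset G$ as the union of strata $H_i$ with $i \ne r$ that are either irreducible non-$H_r$-related strata or NEG edges whose forward $f$-orbit avoids the interior of $H_r$, together with the endpoints of any indivisible periodic Nielsen path of $H_r$-height (the last inclusion being relevant precisely when $H_r$ is geometric). Then $\A_\na(\Lambda^\pm)$ is declared to be the subgroup system whose components are the conjugacy classes of fundamental groups of the noncontractible components of $Z$, taken together with extra peripheral subgroups arising from the geometric case. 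This makes items \pref{ItemThmCAnaVGS} and \pref{ItemThmCAnaGeom} essentially structural facts about $Z$: since $Z$ is a subgraph of a marked graph, its components contribute free subgroups, but to verify that $\A_\na$ is a \emph{vertex group system} one must see each component either deformation retracts to a vertex or is cut off by a Nielsen path, so I would cite Proposition~\ref{PropVerySmallTree} and reduce both items to its conclusions.

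For item \pref{ItemThmCAnaIsAna}, which is the dynamical heart of the theorem, I would argue in two directions. If a conjugacy class $c$ is carried by $\A_\na(\Lambda^\pm)$, its circuit representative $\sigma$ in $G$ lies in $Z$ (possibly after tightening across Nielsen paths), and by construction $f(Z) \subset Z$ up to these Nielsen paths, so no iterate $f^k(\sigma)$ contains an $H_r$-tile of uniformly large length; this forces the weak limit to avoid generic leaves of $\Lambda^+$, giving non-attraction. Conversely, if $c$ is not weakly attracted, then by the splitting technology for \cts{} the circuits $f^k(\sigma)$ must avoid arbitrarily long $H_r$-tiles; using completely split decompositions one shows $\sigma$ is a concatenation of terms each lying in $Z$ or in a Nielsen path, yielding a carrying by $\A_\na$. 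This step is where the hard combinatorial work of the paper should reside; the main obstacle is controlling how NEG edges with $H_r$-content in their forward orbit push circuits to hit $H_r$-tiles of growing length, and one must invoke the bounded cancellation properties of \cts{} to rule out pathological cancellation.

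Item \pref{ItemThmFAnaWellDef} is then a formal uniqueness result: any vertex group system $\A$ whose carried conjugacy classes coincide with those of $\A_\na$ must agree with $\A_\na$, because vertex group systems are determined by their carried conjugacy classes (this is the content of the VGS-recognition result from \PartOne{}). In particular this gives invariance of $\A_\na$ under the choice of \ct{} representative, since any other construction producing a VGS satisfying \pref{ItemThmCAnaIsAna} yields the same system.

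Finally, for the symmetry property \pref{ItemThmCAnaPM}, the natural approach is to apply the entire construction to $\phi^\inv$ and $\Lambda^-$, producing a system $\A_\na'(\Lambda^\pm)$ with the analogous characterization: a conjugacy class is not weakly attracted to $\Lambda^-$ under $\phi^\inv$ iff it is carried by $\A_\na'$. Both $\A_\na$ and $\A_\na'$ are vertex group systems, so by \pref{ItemThmFAnaWellDef} it suffices to show they carry the same set of conjugacy classes. The main obstacle, and the most delicate point of the whole theorem, is proving this equality of carried sets: the \ct{} for $\phi$ and the \ct{} for $\phi^\inv$ are a priori unrelated, and one must bridge between them by a separate dynamical argument. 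I would attempt this by passing through a common invariant: show that a conjugacy class $c$ is attracted to $\Lambda^+$ under $\phi$ iff the $\phi$-orbit of $c$ is unbounded in an appropriate sense iff symmetrically $c$ is attracted to $\Lambda^-$ under $\phi^\inv$, using the dual-lamination-pair technology and the geometric model from \PartOne{} to encode both laminations simultaneously. Once that dynamical symmetry is in place, the remaining parts of Theorem~F drop out of the uniqueness in \pref{ItemThmFAnaWellDef}.
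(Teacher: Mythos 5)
Your outline for items (1)--(4) tracks the paper's development closely: you correctly identify the concrete construction via a nonattracting subgraph $Z$ and a Nielsen-path--glued graph $K$, the role of complete splittings in characterizing nonattracted circuits (Lemma~\ref{defining Z}), and the use of vertex-group-system recognition (Lemma~\brefGM{LemmaVSElliptics}) to obtain uniqueness and CT-independence in item~(4). Two minor imprecisions in your description of $Z$ and $K$: the paper's criterion for $H_i \subset Z$ is that no edge of $H_i$ is weakly attracted to $\Lambda^+$ (not that the forward $f$-orbit avoids the interior of $H_r$), and the edge $E_\rho$ representing the domain of $\rho_r$ is glued onto $Z$ whenever $\hat\rho_r$ is nontrivial, including the parageometric (nongeometric, nonclosed $\rho_r$) case --- though in that case the noncontractible part of $K$ deformation retracts to that of $Z$, so the resulting subgroup system agrees with what you describe.

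The genuine gap is in item~(5), which you correctly flag as the delicate point. The proposed bridge --- that $c$ is attracted to $\Lambda^+$ under $\phi$ iff the $\phi$-orbit of $c$ is ``unbounded in an appropriate sense'' iff $c$ is attracted to $\Lambda^-$ under $\phi^\inv$ --- does not work: a conjugacy class can have unbounded word length under iteration of $\phi$ while being weakly attracted to a completely different lamination pair in $\L^\pm(\phi)$, so unboundedness is not equivalent to attraction to the specific lamination $\Lambda^+$. The paper's actual argument (Corollary~\ref{CorNAIndOfPM}) is asymmetric and works inside the single CT $f \from G \to G$ for $\phi$: assuming $[a]$ is not weakly attracted to $\Lambda^+$ under $\phi$, all $\phi^{-k}([a])$ lie in $\<Z,\hat\rho_r\>$; if additionally $[a]$ were attracted to $\Lambda^-$ under $\phi^\inv$, then closedness of the set of lines carried by $\<Z,\hat\rho_r\>$ (Lemma~\ref{LemmaZPClosed}~\pref{item:closed}) would force the generic leaf $\gamma$ of $\Lambda^-$, realized in $G$, to lie in $\<Z,\hat\rho_r\>$. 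But $\F_\supp(\gamma) = \F_\supp(\Lambda^-) = \F_\supp(\Lambda^+) = [G_r]$ forces $\gamma$ to have height~$r$, while paths in $\<Z,\hat\rho_r\>$ can meet $H_r$ only through occurrences of $\rho_r^{\pm 1}$; a short case analysis on $\hat\rho_r$ (trivial, nonclosed, or closed --- the last using that no generic leaf is periodic) gives the contradiction. Note that the geometric model from \PartOne\ plays no role in this step; the decisive inputs are the weak closedness of $\<Z,\hat\rho_r\>$ and the equality of free factor supports of the dual pair.
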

\noindent
Furthermore (Definition~\ref{defn:Z}), choosing any \ct\ $f \from G \to G$ representing $\phi$ with \eg-stratum $H_r$ corresponding to $\Lambda^+_\phi$, the nonattracting subgroup system $\A_\na(\Lambda^\pm)$ has a concrete description in terms of $f$ and the indivisible Nielsen paths of height~$r$ (the latter are described in (\recognition\ Corollary~4.19) or Fact~\refGM{FactEGNPUniqueness}). The description given in Definition~\ref{defn:Z} is our first definition of $\A_\na(\Lambda^\pm)$, and it is not until Corollary~\ref{CorPMna} that we prove $\A_\na(\Lambda^\pm)$ is well-defined independent of the choice of \ct\ (item~\pref{ItemThmFAnaWellDef} above). Corollary~\ref{CorNAIndOfPM} (item~\pref{ItemThmCAnaPM} above) shows moreover that $\A_\na(\Lambda^\pm)$ is indeed well-defined independent of the choice of nonzero power of $\phi$, depending only on the cyclic subgroup $\<\phi\>$ and the lamination pair~$\Lambda^\pm$. 

\subparagraph{Notation:} The nonattracting subgroup system $\A_\na(\Lambda^\pm)$ depends not only on the lamination pair $\Lambda^\pm$ but also on the outer automorphism $\phi$ (up to nonzero powers). Often we emphasize this dependence by building $\phi$ into the notation for the lamination itself, writing $\Lambda^\pm_\phi$ and $\A_\na(\Lambda^\pm_\phi)$.

\paragraph{The set of nonattracted lines: Theorems G and~H.}  Theorem~G, a vague statement of which was given in the introduction, is a detailed description of the set $\B_\na(\Lambdapp;\phi)$ of all lines $\gamma \in \B$ that are not attracted to $\Lambda^+_\phi$ under iteration by $\phi$. Theorem~H is a less technical and more easily applied distillation of Theorem~G, and is applied several times in \PartFour.

As stated in Lemma~\ref{LemmaThreeNASets}, there are three somewhat obvious subsets of  $\B_\na(\Lambdapp;\phi)$. One is the subset $\B(\A_\na(\Lambdapmp))$ of all lines supported by the nonattracting subgroup system $\A_\na(\Lambdapmp)$. Another is the subset $\B_\gen(\phi^\inv)$ of all generic leaves of attracting laminations for $\phi^{-1}$. The third is the subset $\B_\sing(\phi^\inv)$ of all singular lines for $\phi^{-1}$: by definition these lines are the images under the quotient map $\wt\B \mapsto \B$ of those endpoint pairs $\{\xi,\eta\} \in \wt \B$ such that $\xi,\eta$ are each nonrepelling fixed points for the action of some automorphism representing~$\phi^\inv$. 

In Definition~\ref{DefnConcatenation} we shall define an operation of ``ideal concatenation'' of lines: given a pair of lines which are asymptotic in one direction, they define a third line by concatenating at their common ideal point and straightening, or what is the same thing by connecting their opposite ideal points by a unique line. 

Theorem~G should be thought of as stating that $\B_\na(\Lambdapp;\phi)$ is the smallest set of lines that contains $\B(\A_\na(\Lambdapmp)) \union \B_\gen(\phi^\inv) \union \B_\sing(\phi^\inv)$ and is closed under this operation of ideal concatenation. It turns out that only a limited amount of such concatenation is possible, namely, extending a line of $\B(\A_\na(\Lambdapmp))$ by concatenating on one or both ends with a line of $\B_\sing(\phi^\inv)$, producing a set of lines we denote $\B_\ext(\Lambda^\pm_\phi;\phi^\inv)$ (see Section~\ref{SectionNAConcatenation}).

\begin{TheoremG}[\textbf{Theorem \ref{ThmRevisedWAT}}]
If $\phi, \phi^{-1} \in \Out(F_n)$ are rotationless and if $\Lambda^\pm_\phi \in \L^\pm(\phi)$ then 
$$\B_\na(\Lambda^+_\phi) = \B_\ext(\Lambda^\pm_\phi;\phi^\inv) \union \B_\gen(\phi^\inv) \union \B_\sing(\phi^\inv)
$$
\end{TheoremG}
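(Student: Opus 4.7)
The plan is to prove both inclusions separately, with the reverse inclusion containing the substantive work. For the forward containment $\B_\ext(\Lambda^\pm_\phi;\phi^\inv) \union \B_\gen(\phi^\inv) \union \B_\sing(\phi^\inv) \subseteq \B_\na(\Lambda^+_\phi)$, I would invoke Lemma~\ref{LemmaThreeNASets} for the three summands and supplement it with a bounded cancellation estimate for the extension operation. Non-attraction of $\B(\A_\na(\Lambdapmp))$ follows from Theorem~F for conjugacy classes, extended by weak closure since $\B_\na$ is weakly closed and every carried line is a weak limit of carried circuits. A generic leaf of an attracting lamination $\Lambda'$ of $\phi^\inv$ has its $\phi$-orbit accumulating weakly on $\Lambda'$, which is weakly disjoint from the generic leaves of $\Lambda^+_\phi$. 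A singular line for $\phi^\inv$ is $\phi$-fixed as an unoriented line (its endpoints being fixed by a common representative of $\phi^\inv$), and birecurrence of generic leaves separates it from $\Lambda^+_\phi$. For $\B_\ext$, a bounded cancellation estimate shows that appending a singular tail to a carried line does not create weak accumulation on $\Lambda^+_\phi$.

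For the reverse containment I would fix a \ct\ representative $\fG$ of $\phi$ whose \eg-stratum $H_r$ corresponds to $\Lambda^+_\phi$, realize a line $\gamma \in \B_\na$ as a bi-infinite edge path in $G$, and analyze the complete splittings of the iterates $f^k(\gamma)$. The strategy is a trichotomy on the behaviour of the $H_r$-edges of $\gamma$ under iteration. Case~(i): if $\gamma$ misses $H_r$, then $\gamma$ lies in $G_{r-1}$, and the concrete description of $\A_\na$ in Definition~\ref{defn:Z}, together with approximation of $\gamma$ by carried circuits and Theorem~F, places $\gamma \in \B(\A_\na(\Lambdapmp)) \subseteq \B_\ext$. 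Case~(ii): if $\gamma$ contains $H_r$-edges but the $H_r$-legal subpaths of $f^k(\gamma)$ remain uniformly bounded in length, then $\gamma$ decomposes across its $H_r$-edges into finite pieces and at most two infinite rays whose ideal endpoints, by the principal lift analysis of \recognition, are non-repelling fixed points of a single representative of $\phi^\inv$, so $\gamma \in \B_\sing(\phi^\inv) \union \B_\ext$. Case~(iii): if the $H_r$-legal subpaths of $f^k(\gamma)$ grow unboundedly yet $f^k(\gamma)$ still avoids weak accumulation on $\Lambda^+_\phi$, then a pigeon-hole argument among long $r$-legal subpaths in the \ct, together with birecurrence of generic leaves (\BookZero), forces $\gamma$ to coincide up to shift with a generic leaf of some attracting lamination of $\phi^\inv$, so $\gamma \in \B_\gen(\phi^\inv)$.

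The main obstacle will be case~(ii): bridging the dynamical hypothesis ``$\gamma$ is not weakly attracted to $\Lambda^+_\phi$'' to the algebraic conclusion ``the two ideal ends of $\gamma$ are non-repelling fixed points of a \emph{common} representative of $\phi^\inv$''. This demands combining the uniqueness of \eg\ indivisible Nielsen paths of height $r$ (Fact~\refGM{FactEGNPUniqueness}) with the principal lift classification of fixed points from \recognition, plus careful bounded cancellation bookkeeping in the universal cover of $G$ to pair the two tails of $\gamma$ under a common lift. Showing that the two ends are paired by a single lift, rather than by merely conjugate lifts of $\phi^\inv$, is the technical heart of the proof and the place where the full strength of the complete splitting machinery of \cts\ is needed.
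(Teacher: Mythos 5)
Your forward inclusion is essentially the paper's: Lemma~\ref{LemmaThreeNASets} handles $\B(\A_\na)$, $\B_\sing$, $\B_\gen$, and the extended lines $\B_\ext$ are handled because they are concatenations of singular lines with $\A_\na$-carried lines and $\B_\na$ is closed under concatenation (Lemma~\ref{LemmaConcatenation}, proven by a tiles/bounded-cancellation argument much like the one you sketch). The problems all lie in the reverse inclusion, where your trichotomy on $H_r$-edge behaviour does not survive scrutiny. Case~(i) is wrong as stated: ``$\gamma$ misses $H_r$'' does not force $\gamma \subset G_{r-1}$, since $\gamma$ can carry edges of strata $H_s$ with $s > r$ while containing no $H_r$-edge; whether such a $\gamma$ is carried by $\A_\na(\Lambdapmp)$ then depends on which higher strata lie in the nonattracting subgraph $Z$, and this needs a separate argument. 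Case~(iii) is vacuous: in a \ct, once an iterate $f^k_\#(\gamma)$ contains a long $r$-legal segment meeting $H_r$, later iterates contain longer ones by RTT-(i), so unbounded growth of $r$-legal subpaths is precisely weak attraction to $\Lambdapp$ and contradicts $\gamma \in \B_\na(\Lambdapp)$.

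That leaves your Case~(ii) to carry the entire load, and here the gap is substantive, not merely organizational. The paper's reverse inclusion is an induction on the height $s$ of $\gamma$ with case analysis on whether $H_s$ is \neg\ or \eg, and, for \eg, on whether $\gamma$ is attracted to $\Lambda^+_s$. Only for height $s = r$ does a version of your argument apply (Lemmas~\ref{nonGeometricFullHeightCase} and~\ref{geometricFullHeightCase}), and even there the analysis is done after passing to a \ct\ $f'\from G'\to G'$ for $\psi = \phi^\inv$, not solely in $G$; in the geometric case it requires the full geometric model machinery — the surface $S$, the pseudo-Anosov $\theta$, Nielsen--Thurston theory, the peripheral Bass--Serre tree, and the over--under decomposition of lines. ``Careful bounded cancellation bookkeeping in the universal cover of $G$'' is not a substitute for this. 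Moreover, for lines of height $s > r$ where $\gamma$ is attracted to some intermediate $\Lambda^+_s$ but not to $\Lambdapp$ (the paper's Case~2B), one must show the alternating concatenation of ``over'' and ``under'' pieces is finite before reducing to a special case; this finiteness argument, using $f_\#$-invariance of $\<Z,\hat\rho_r\>$ and cancellation estimates between neighbouring blocks, has no counterpart in your proposal. Finally, your claim in Case~(ii) that the two ideal ends are fixed by a \emph{single} representative of $\phi^\inv$ overshoots: lines in $\B_\ext(\Lambdapmp;\psi)$ can pair an endpoint in $\bdy A$ with one in $\Fix_N(\wh\Psi)$ for an $A$-related $\Psi$, or use two different $A$-related automorphisms, so the correct target is the flexible extended boundary $\bdy_\ext(A,\psi)$ of Definition~\ref{defn: extended na}, reached via an inductive concatenation $\gamma = \mu \diamond \sigma \diamond \nu$ rather than a single singular line.
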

Note that the first of the three terms in the union is the only one that depends on the lamination pair $\Lambda^\pm_\phi$; the other two depend only on $\phi^\inv$.

\bigskip

For certain purposes in \PartFour\ the following corollary to Theorem~G is useful in being easier to directly apply. In particular item~\pref{ThmHUnifWA} provides a topologically uniform version of weak attraction:

\begin{TheoremH}[\textbf{Corollary~\ref{CorOneWayOrTheOther}}]
Given rotationless $\phi,\phi^\inv \in \Out(F_n)$ and a dual lamination pair $\Lambda^\pm \in \L^\pm(\phi)$, the following hold:
\begin{enumerate}
\item Any line $\ell \in \B$ that is not carried by $\A_\na(\Lambda^\pm)$ is weakly attracted either to $\Lambda^+$ by iteration of $\phi$ or to $\Lambda^-$ by iteration by $\phi^\inv$.
\item \label{ThmHUnifWA}
For any neighborhoods $V^+,V^- \subset \B$ of $\Lambda^+, \Lambda^-$, respectively, there exists an integer $m \ge 1$ such that for any line $\ell \in \B$ at least one of the following holds: \ $\gamma \in V^-$; \ $\phi^m(\gamma) \in V^+$; \ or $\gamma$ is carried by $\A_\na(\Lambda^\pm)$.
\end{enumerate}
\end{TheoremH}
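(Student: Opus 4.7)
The plan is to derive Theorem~H from Theorem~G by applying the latter to both $\phi$ and to $\phi^\inv$, using that $\Lambda^\pm$ is also a dual lamination pair for $\phi^\inv$ with the roles of $+$ and $-$ swapped. Write $B^+$ for the set of lines weakly attracted to $\Lambda^+$ under $\phi$, $B^-$ for those weakly attracted to $\Lambda^-$ under $\phi^\inv$, and $C = \B(\A_\na(\Lambda^\pm))$.

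\textbf{Part (1).} Supposing for contradiction that $\ell \notin C \cup B^+ \cup B^-$, Theorem~G applied to $\phi$ places $\ell$ in $\B_\ext(\Lambda^\pm_\phi;\phi^\inv) \cup \B_\gen(\phi^\inv) \cup \B_\sing(\phi^\inv)$, and applied to $\phi^\inv$ places $\ell$ in $\B_\ext(\Lambda^\pm_\phi;\phi) \cup \B_\gen(\phi) \cup \B_\sing(\phi)$. I would analyze the nine pairwise intersections, showing each lies in $C$. For the singular--singular intersection, both endpoints of $\ell$ are nonrepelling fixed points of representative automorphisms of both $\phi$ and $\phi^\inv$, so $\ell$ is $\phi$-periodic, and the concrete description of $\A_\na$ in terms of a \ct\ representative places $\ell$ in $C$. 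For the generic--generic intersection, the laminations of which $\ell$ is a generic leaf cannot be $\Lambda^+_\phi$ or $\Lambda^-_\phi$ (else $\ell \in B^\pm$); generic leaves of other attracting laminations of $\phi$ are carried by $\A_\na(\Lambda^\pm)$, placing $\ell$ in $C$. Mixed cases and cases involving $\B_\ext$ reduce to these base cases via the ideal concatenation structure on which $\B_\ext$ is built.

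\textbf{Part (2).} This needs an intermediate structural claim: $(B^- \setminus B^+) \setminus C \subset \Lambda^-$. I would prove this in a manner parallel to Part~(1), using Theorem~G applied to $\phi$ to place any such $\ell$ in $\B_\ext \cup \B_\gen(\phi^\inv) \cup \B_\sing(\phi^\inv)$; being also in $B^-$ forces its $\phi^\inv$-asymptotic behavior to match $\Lambda^-_\phi$, so $\ell \in \Lambda^-$ (for the $\B_\gen$ case the ambient lamination must be $\Lambda^-_\phi$, and the $\B_\sing$ and $\B_\ext$ cases reduce to this via periodicity of singular endpoints and the finite structure of extensions). Granting this claim, for any neighborhood $V^-$ of $\Lambda^-$ the compact set $K := \B \setminus V^-$ satisfies $K \subset C \cup B^+$. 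Choose a trapping neighborhood $V^+_0 \subset V^+$ of $\Lambda^+$ with $\phi(V^+_0) \subset V^+_0$, so that $B^+ = \bigcup_{m \ge 0} \phi^{-m}(V^+_0)$ is an increasing open union. A compactness argument on $K$, using closedness and $\phi$-invariance of $C$ together with the disjointness $C \cap B^+ = \emptyset$, then extracts a uniform $m$ with $K \subset \phi^{-m}(V^+) \cup C$, which is the conclusion.

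The hardest part will be the pairwise case analyses in Part~(1) and in the intermediate claim for Part~(2); these rely on detailed structural properties of $\B_\ext$, $\B_\gen$, and $\B_\sing$ developed earlier in the paper, as well as the nontrivial fact that generic leaves of attracting laminations other than $\Lambda^\pm_\phi$ are carried by $\A_\na(\Lambda^\pm)$. A secondary obstacle is the compactness step at the end of Part~(2): it is not a straightforward Heine--Borel argument because $C$ is closed rather than open and $K \setminus C$ is therefore not compact, so it will require leveraging the $\phi$-invariance of $C$ and a careful choice of neighborhoods rather than a naive finite subcover.
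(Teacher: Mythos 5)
Your Part~(1) outline follows the paper's own approach --- apply Theorem~G on both sides and case-check the intersections --- though some of your sub-claims are imprecise: a $\phi$-invariant singular line need not be carried by $\A_\na(\Lambda^\pm)$, and it is only generic leaves of those laminations \emph{not containing} $\Lambda^\pm_\phi$ that are carried by $\A_\na(\Lambda^\pm)$ (the two-sided nonattraction hypothesis is what excludes the larger ones). These are fixable.

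Part~(2), however, rests on a false intermediate claim. Take any dual pair $\Lambda^\pm_\phi$ that is not topmost, so there is $\Lambda^-_t \in \L(\phi^\inv)$ with $\Lambda^-_\phi \subsetneq \Lambda^-_t$, and let $\ell$ be a generic leaf of $\Lambda^-_t$. Then $\ell \in B^-$: each iterate of $\ell$ under $\phi^\inv$ is again a generic leaf of $\Lambda^-_t$, and since $\Lambda^-_\phi \subset \Lambda^-_t$ every such leaf already lies in every weak neighborhood of a generic leaf of $\Lambda^-_\phi$. Also $\ell \notin B^+$, which is exactly what Lemma~\ref{LemmaThreeNASets}~\pref{ItemGenNA} shows for lines in $\B_\gen(\phi^\inv)$; $\ell \notin C$, because the stratum carrying $\Lambda^-_t$ in a CT for $\phi^\inv$ lies outside the nonattracting subgraph; and $\ell \notin \Lambda^-_\phi$. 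So $(B^- \setminus B^+) \setminus C$ strictly contains $\Lambda^-$, and the ``ambient lamination must be $\Lambda^-_\phi$'' step in your sketch is wrong. Even with the correct weakening (that such $\ell$ have $\Lambda^-_\phi$ in their weak closure, hence lie in every open $V^-$), the compactness step you gesture at is not a loose end to be tidied later: $\B$ is not compact in the weak topology and $C$ is closed rather than open, so there is no finite-subcover shortcut. The paper argues Part~(2) by contradiction instead: assuming uniformity fails, it produces a sequence $\gamma_i \notin V^- \cup C$ with $\phi^{2m_i}(\gamma_i) \notin V^+$ and $m_i \to \infty$, and then invokes Lemma~\ref{ItemNotZPLimit} --- proved precisely for this purpose --- to extract a weak subsequential limit of $\phi^{m_i}(\gamma_i)$ that is still not carried by $\<Z,\hat\rho_r\>$; with trapping neighborhoods this limit is attracted neither forward nor backward, contradicting Part~(1). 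Some such weak-limit statement for lines outside $\<Z,\hat\rho_r\>$ is the essential ingredient your outline does not supply.
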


\setcounter{tocdepth}{2}
\tableofcontents

\section{The nonattracting subgroup system}
\label{SectionWeakAttraction}

Consider a rotationless $\phi \in \Out(F_n)$ and a dual lamination pair $\Lambda^\pm_\phi \in \L^\pm(\phi)$. Since $\phi$ is rotationless its action on $\L(\phi)$ is the identity and therefore so is its action on $\L(\phi^\inv)$; the laminations $\Lambda^+_\phi$ and $\Lambda^-_\phi$ are therefore fixed by~$\phi$ and by~$\phi^\inv$. In this setting we shall define the \emph{nonattracting subgroup system} $\A_\na(\Lambda^\pm_\phi)$, an invariant of $\phi$ and $\Lambda^\pm_\phi$.

One can view the definition of $\A_\na(\Lambda^\pm_\phi)$ in two ways. First, in Definition~\ref{defn:Z}, we define $\A_\na(\Lambda^\pm_\phi)$ with respect to a choice of a \ct\ representing $\phi$; this \ct\ acts as a choice of ``coordinate system'' for $\phi$, and with this choice the description of $\A_\na(\Lambda^\pm_\phi)$ is very concrete. We derive properties of this definition in results to follow, from Proposition~\ref{PropVerySmallTree} to Corollary~\ref{CorNASubgroups}, including most importantly the proofs of items~\pref{ItemThmCAnaVGS}, \pref{ItemThmCAnaGeom} and~\pref{ItemThmCAnaIsAna} of Theorem~F. Then, in Corollaries~\ref{CorPMna} and~\ref{CorNAIndOfPM}, we prove that $\A_\na(\Lambda^\pm_\phi)$ is invariantly defined, independent of the choice of \ct\, and furthermore independent of the choice of a positive or negative power of~$\phi$, in particular proving items~\pref{ItemThmFAnaWellDef} and~\pref{ItemThmCAnaPM} of Theorem~F. The independence result is what allows us to regard the nonattracting subgroup system as an invariant of a dual lamination pair rather than of each lamination individually (but still with implicit dependence on $\phi$ up to nonzero power).

\smallskip\noindent
\textbf{Weak attraction.} Recall (Section~\refGM{SectionLineDefs})\footnote{Cross references such as ``Section I.X.Y.Z'' refer to Section X.Y.Z of \PartOne. Cross references to the Introduction \Intro, to \PartOne, and to \PartTwo\ are to the June 2013 versions.} 
the notation $\B$ for the space of lines of~$F_n$ on which $\Out(F_n)$ acts naturally, and recall that that a line $\ell \in \B$ is said to be \emph{weakly attracted} to a generic leaf $\lambda \in \Lambda^+_\phi \subset \B$ under iteration by $\phi$ if the sequence $\phi^n(\ell)$ weakly converges to $\lambda$ as $n \to +\infinity$, that is, for each neighborhood $U \subset \B$ of $\lambda$ there exists an integer $N > 0$ such that if $n \ge N$ then $\phi^n(\ell) \in U$. Note that since any two generic leaves of $\Lambda^+_\phi$ have the same weak closure, namely $\Lambda^+_\phi$, this property is independent of the choice of~$\lambda$; for that reason we often speak of $\ell$ being weakly attracted to $\Lambda^+_\phi$ by iteration of $\phi$.

This definition of weak attraction applies to $\phi^\inv$ as well, and so we may speak of $\ell$ being weakly attracted to $\Lambda^-_\phi$ under iteration by $\phi^\inv$. This definition also applies to iteration of a \ct\ $f \from G \to G$ representing $\phi$ on elements of the space $\wh\B(G)$ (Section~\refGM{def:DoubleSharp}), which contains the subspace $\B(G)$ identified with $\B$ by letting lines be realized in~$G$, and which also contains all finite paths and rays in~$G$. We may speak of such paths being weakly attracted to $\Lambda^+_\phi$ under iteration by $f$. Whenever $\phi$ and the $\pm$ sign are understood, as they are in the notations $\Lambda^+_\phi$ and $\Lambda^-_\phi$, we tend to drop the phrase ``under iteration by \ldots''. 

%
%

\begin{remark} Suppose that $\phi$ is a rotationless iterate of some possibly nonrotationless $\eta \in \Out(F_n)$ and that $\Lambda^+_\phi$ is $\eta$-invariant.  Then $\gamma$ is weakly attracted to $\Lambda^+_\phi$ under iteration by $\eta$ if and only $\gamma$ is weakly attracted to $\Lambda^+_\phi $ under iteration by $\phi$. Our results therefore apply to $\eta$ as well as $\phi$.
\end{remark}

\subsection{The nonattracting subgroup system $\A_{\na}(\Lambdapp)$}
\label{SectionAsubNA}
 
The Weak Attraction Theorem  6.0.1 of \BookOne\ answers the ``general weak attraction question'' posed above in the restricted setting of a lamination pair $\Lambda^\pm_\phi$ which is topmost with respect to inclusion, and under restriction to birecurrent lines only. The answer is expressed in terms of an ``improved relative train track representative'' $g \from G \to G$, a ``nonattracting subgraph'' $Z \subset G$, a (possibly trivial) Nielsen path $\hat \rho_r$, and an associated set of paths denoted $\<Z,\hat \rho_r\>$. The construction and properties of $Z$ and $\<Z,\hat \rho_r\>$ are given in \cite[Proposition~6.0.4]{BFH:TitsOne}.  

In Definition~\ref{defn:Z} and the lemmas that follow, we generalize the subgraph $Z$, the path set $\<Z,\hat \rho_r\>$, and the nonattracting subgroup system beyond the topmost setting. 

\medskip
\textbf{Notation:} Throughout Subsections~\ref{SectionAsubNA} and~\ref{SectionAppsAndPropsANA} we fix a rotationless $\phi \in \Out(F_n)$, a lamination pair~$\Lambda^\pm_\phi \in \L^\pm(\phi)$, and a \ct\ representative $f \from G \to G$ with \eg\ stratum $H_r$ corresponding to $\Lambda^+_\phi$. For a review of \cts, completely split paths, and the terms of a complete splitting, we refer the reader to Section~\refGM{SectionRTTDefs}, particularly Definition~\refGM{DefCompleteSplitting}.

For the nonattracting subgroup system we shall use various notations in various contexts. The notation $\A_\na(\Lambda^+_\phi)$ is used from the start, presuming immediately what we shall eventually show in Corollary~\ref{CorNAWellDefined} regarding its independence from the choice of a \ct\ representative, but leaving open for a while the issue of whether it depends on the choice of $\pm$ sign. After the latter independence is established in Corollary~\ref{CorNAIndOfPM} we will switch over to the notation $\A_\na(\Lambda^\pm_\phi)$. When we wish to emphasize dependence on $\phi$ we sometimes use the notation $\A_\na(\Lambda^\pm;\phi)$ or $\A_\na(\Lambda^+;\phi)$; and when we wish to de-emphasize this dependence we sometimes use $\A_\na(\Lambda^\pm)$ or $\A_\na(\Lambda^+)$.

\medskip

\begin{defns} \label{defn:Z} \textbf{The graph $Z$, the path $\hat \rho_r$, the path set $\<Z,\hat \rho_r\>$, and the subgroup system $\A_\na(\Lambda^+_\phi)$.} \quad

\noindent
We shall define the \emph{nonattracting subgraph} $Z$ of $G$, and a path $\hat\rho_r$, either a trivial path or a height~$r$ indivisible Nielsen path if one exists. Using these we shall define a graph $K$ and an immersion $K \mapsto G$ by consistently gluing together the graph $Z$ and the domain of $\hat\rho_r$. We then define $\A_\na(\Lambdapp)$ in terms of the induced \hbox{$\pi_1$-injection} on each component of~$K$. We also define a groupoid of paths $\<Z,\hat\rho_r\>$ in~$G$, consisting of all concatenations whose terms are edges of $Z$ and copies of the path~$\hat\rho_r$ or its inverse, equivalently all paths in $G$ that are images under the immersion $K \to G$ of paths in~$K$.

\smallskip
\textbf{Definition of the graph $Z$.}
The \emph{nonattracting subgraph} $Z$ of $G$ is defined as a union of certain strata $H_i \ne H_r$ of $G$, as follows. If $H_i$ is an irreducible stratum then $H_i \subset Z$ if and only if no edge of $H_i$ is weakly attracted to $\Lambda$; equivalently, using Fact~\trefGM{FactAttractingLeaves}{ItemSplitAnEdge}, we have $H_i \subset G \setminus Z$ if and only if for some (every) edge $E_i$ of $H_i$ there exists $k \ge 0$ so that some term in the complete splitting of $f^k_\#(E_i)$ is an edge in~$H_r$. If $H_i$ is a zero stratum enveloped by an \eg\ stratum $H_s$ then $H_i \subset Z$ if and only if $H_s \subset Z$. 

\smallskip\textbf{Remark.} 
$Z$ automatically contains every stratum $H_i$ which is a fixed edge, an \neg-linear edge, or an \eg\ stratum distinct from $H_r$ for which there exists an indivisible Nielsen path of height $i$. For a fixed edge this is obvious. If $H_i$ is an \neg-linear edge $E_i$ then this follows from (Linear Edges) which says that $f(E_i) = E_i \cdot u$ where $u$ is a closed Nielsen path, because for all $k \ge 1$ it follows that the path $f^k_\#(E_i)$ completely splits as $E_i$ followed by Nielsen paths of height $<i$, and no edges of $E_r$ occur in this splitting. For an \eg\ stratum $H_i$ with an indivisible Nielsen path of height $i$ this follows from Fact~\trefGM{FactNielsenBottommost}{ItemBottommostEdges} which says that for each edge $E \subset H_i$ and each $k \ge 1$, the path $f^k_\#(E)$ completely splits into edges of $H_i$ and Nielsen paths of height $< i$; again no edges of $E_r$ occur in this splitting.

\smallskip\textbf{Remark.} 
Suppose that $H_i$ is a zero stratum enveloped by the \eg\ stratum $H_s$ and that $H_i \subset Z$. Applying the definition of $Z$ to $H_i$ it follows that $H_s \subset Z$. Applying the definition of $Z$ to $H_s$ it follows that no $s$-taken connecting path in $H_i$ is weakly attracted to $\Lambdapp$. Applying (Zero Strata) it follows that no edge in $Z$ is weakly attracted to $\Lambda$.

\smallskip\textbf{Definition of the path $\hat \rho_r$.} If there is an \iNp\ $\rho_r$ of height $r$ then it is unique up to reversal by Fact~\refGM{FactEGNPUniqueness} and we define $\hat \rho_r = \rho_r$. Otherwise, by convention we choose a vertex of~$H_r$ and define $\hat \rho_r$ to be the trivial path at that vertex. 

\smallskip\textbf{Definition of the path set $\<Z,\hat \rho_r\>$.} Consider $\wh\B(G)$, the set of lines, rays, circuits, and finite paths in $G$ (Definition~\refGM{SectionLineDefs}). Define the subset $\<Z,\hat \rho_r\> \subset \wh\B(G)$ to consist of all elements which decompose into a concatenation of subpaths each of which is either an edge in $Z$, the path $\hat \rho_r$ or its inverse $\hat \rho_r^{-1}$. Given $\ell \in \wh\B(G)$, if $\ell \in \<Z,\hat\rho_r\>$ we will also say that $\ell$ is \emph{carried by} $\<Z,\hat\rho_r\>$; see e.g.\ Lemma~\ref{LemmaZPClosed}~\pref{item:ZP=NA}.

\smallskip\textbf{Definition of the subgroup system $\A_\na(\Lambdapp)$.} If $\hat\rho_r$ is the trivial path, let $K = Z$ and let $h \from K \inject G$ be the inclusion. Otherwise, define $K$ to be the graph obtained from the disjoint union of $Z$ and an edge $E_\rho$ representing the domain of the Nielsen path $\rho_r \from E_\rho \to G_r$, with identifications as follows: given an endpoint $x \in E(\rho)$, if $\rho_r(x) \in Z$ then identify $x \sim \rho_r(x)$; also, given distinct endpoints $x,y \in E(\rho)$, if $\rho_r(x)=\rho_r(y)$ then identify $x \sim y$ (these points have already been identified if $\rho_r(x)=\rho_r(y) \in Z$). Define $h \from K \to G$ to be the map induced by the inclusion $Z \inject G$ and by the map $\rho_r \from E_\rho \to G$. By Fact~\refGM{FactEGNPUniqueness} the initial oriented edges of $\rho_r$ and $\bar\rho_r$ are distinct in $H_r$, and since no edge of $H_r$ is in $Z$ it follows that the map $h$ is an immersion. The restriction of $h$ to each component of $K$ therefore induces an injection on the level of fundamental groups. Define $\A_\na(\Lambdapp)$, the \emph{nonattracting subgroup system}, to be the subgroup system determined by the images of the fundamental group injections induced by the immersion $h \from K \to G$, over all noncontractible components of $K$. 

\smallskip
\textbf{Remark: The case of a top stratum.} In the special case that $H_r$ is the top stratum of $G$, there is a useful formula for $\A_\na(\Lambda^+_\phi)$ which is obtained by considering three subcases. First, when $\hat\rho_r$ is trivial we have $K=Z=G_{r-1}$. Second is the geometric case, where $\hat\rho_r$ is a closed Nielsen path whose endpoint is an interior point of $H_r$ (Fact~\trefGM{FactEGNielsenCrossings}{ItemEGNielsenPointInterior}), and so the graph $K$ is the disjoint union of $Z=G_{r-1}$ with a loop mapping to $\rho_r$. Third is the ``parageometric'' case, where $\hat\rho_r$ is a nonclosed Nielsen path having at least one endpoint which is an interior point of $H_r$ (Fact~\trefGM{FactEGNielsenCrossings}{ItemEGParageometricOneInterior}), and so $K$ is obtained by attaching an arc to $Z=G_{r-1}$ by identifying at most one endpoint of the arc to $G_{r-1}$; note in this case that union of noncontractible components of $K$ deformation retracts to the union of noncontractible components of $G_{r-1}$. From this we obtain the following formula:
$$\A_\na(\Lambda^+_\phi) = \begin{cases}
[\pi_1 G_{r-1}] & \quad\text{if $\Lambda^+_\phi$ and $H_r$ are nongeometric} \\
[\pi_1 G_{r-1}] \union \{[\<\rho_r\>]\} &\quad\text{if $\Lambda^+_\phi$ and $H_r$ are geometric}
\end{cases}
$$
where in the geometric case $[\<\rho_r\>]$ denotes the conjugacy class of the infinite cyclic subgroup generated by an element of $F_n$ represented by the closed Nielsen path $\rho_r$. 

This completes Definitions~\ref{defn:Z}.
\end{defns}


\begin{remark} 
\label{RemarkGeometricK}
In the special case that the stratum $H_r$ is geometric, the 1-complex $K$ lives naturally as an embedded subcomplex of the geometric model $X$ for $H_r$ (Definition~\refGM{DefGeomModel}), as follows. By item~\prefGM{ItemInteriorBasePoint} of that definition, we may identify $K$ with the subcomplex $Z \union j(\bdy_0 S) \subset G \union j(\bdy_0 S) \subset X$ in such a way that the immersion $K \to G$ is identified with the restriction to $K$ of the deformation retraction $d \from X \to G$. The subgroup system $\A_\na(\Lambdapp) = [\pi_1 K]$ may therefore be described as the conjugacy classes of the images of the inclusion induced injections $\pi_1 K_i \to \pi_1 X \approx F_n$, over all noncontractible components $K_i \subset X$. Noting that $j \from S \to X$ maps each boundary component $\bdy_1 S,\ldots,\bdy_m S$ to $G_{r-1} \subset Z \subset K$ and maps $\bdy_0 S$ to $j(\bdy_0 S) \subset K$, we have $j(\bdy S) \subset K$. It follows in the geometric case that Proposition~\refGM{PropGeomVertGrSys} applies to $[\pi_1 K]$, the conclusion of which will be used in the proof of the following proposition.
\end{remark}

Recall the characterization of geometricity of $\Lambdapp$ given in Proposition \refGM{PropGeomEquiv}, expressed in terms of the free factor support of the boundary components of $S$. Our next result, among other things, gives a different characterization of geometricity of a lamination $\Lambdapp \in \L(\phi)$, expressed in terms of the nonattracting subgroup system $\A_\na(\Lambdapp)$. 


\begin{proposition}[Properties of the nonattracting subgroup system]
\label{PropVerySmallTree} Given a \ct\ $f \from G \to G$ representing $\phi$ with \eg\ stratum $H_r$ corresponding to $\Lambdapp$, the subgroup system $\A_\na(\Lambdapp)$ satisfies the following:
\begin{enumerate}
\item \label{ItemA_naNP}
$\A_\na(\Lambdapp)$ is a vertex group system.
\item \label{ItemA_naNoNP}
$\A_\na(\Lambdapp)$ is a free factor system if and only if the stratum $H_r$ is not geometric.
\item \label{ItemA_naMalnormal}
$\A_\na(\Lambdapp)$ is malnormal, with one component for each noncontractible component of~$K$.
\end{enumerate}
\end{proposition}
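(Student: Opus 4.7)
The plan is to treat the three items together, separating into the geometric and nongeometric cases for $H_r$ and exploiting the two complementary descriptions of $K$: as a subgraph (or near-subgraph) of $G$ in the nongeometric case, and as an embedded subcomplex of the geometric model $X$ in the geometric case described in Remark~\ref{RemarkGeometricK}. For item~\pref{ItemA_naNP}, in the geometric case I would apply Proposition~\refGM{PropGeomVertGrSys} to $[\pi_1 K]$ as already set up in that remark, and in the nongeometric case deduce item~\pref{ItemA_naNP} from item~\pref{ItemA_naNoNP} since every free factor system is automatically a vertex group system.

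For item~\pref{ItemA_naNoNP} in the nongeometric direction I would split further. If $\hat\rho_r$ is the trivial path then $K = Z$ is literally a subgraph of $G$, and the fundamental groups of its noncontractible components form a free factor system by the standard fact that subgraphs of marked graphs determine free factor systems. If $\rho_r$ is a nonclosed Nielsen path (the remaining nongeometric subcase), then the construction of $K$ together with the characterization of Nielsen path endpoints for nongeometric \eg\ strata (Fact~\refGM{FactEGNielsenCrossings}) implies that $E_\rho$ is attached to $Z$ at most at one endpoint; thus $K$ deformation retracts onto a subgraph of a suitable blow-up of $G$, and one again obtains a free factor system. Conversely, in the geometric case I would use Remark~\ref{RemarkGeometricK} to view $K$ as sitting inside $X$ with one component containing the circle $j(\bdy_0 S)$ whose $\pi_1$-image is the conjugacy class $[\langle\rho_r\rangle]$; since the embedding $j \from S \hookrightarrow X$ is $\pi_1$-injective and $\bdy_0 S$ is a genuine boundary component of a surface of nontrivial topology, the cyclic subgroup $\langle\rho_r\rangle$ cannot be a free factor of $F_n$, so $\A_\na(\Lambdapp)$ fails to be a free factor system.

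For item~\pref{ItemA_naMalnormal}, malnormality of each $h_*(\pi_1 K_i)$ together with the non-conjugacy of the subgroups carried by distinct noncontractible components of $K$ should follow from the fact that $h \from K \to G$ is an immersion, via standard Stallings-style arguments for immersed core subgraphs: distinct lifts of distinct components to $\tilde G$ meet in at most a single vertex and so determine malnormal pairwise non-conjugate subgroups in $F_n = \pi_1 G$. In the geometric case the malnormality conclusion may alternatively be extracted from Proposition~\refGM{PropGeomVertGrSys} as applied for item~\pref{ItemA_naNP}.

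The step I expect to be the main obstacle is the ``geometric $\Rightarrow$ not a free factor system'' direction of item~\pref{ItemA_naNoNP}. Properly carrying this out requires identifying the conjugacy class $[\langle\rho_r\rangle]$ with the boundary curve class of $S \subset X$ and then arguing that such a boundary curve of a surface $\pi_1$-injecting into $X$ cannot be part of a free factor system of $F_n$; this depends on delicate properties of the geometric model developed in \PartOne\ rather than on anything purely local to the definition of $K$.
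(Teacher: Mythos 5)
Your overall case decomposition (geometric vs.\ nongeometric, and within nongeometric, trivial $\hat\rho_r$ vs.\ nonclosed $\rho_r$) matches the paper's, and the nongeometric branch of item~(2) is essentially right: when $\hat\rho_r$ is trivial, $K=Z$ is a subgraph; when $\rho_r$ is nonclosed, Fact~I.\brefGM{FactEGNielsenCrossings} gives $E_\rho$ a free endpoint, and either a deformation retraction onto $Z$ or the paper's explicit blow-up $\wh G$ (replacing the edge $E$ crossed once by $\rho_r$ with an edge $J$ along $\rho_r$, so that $K\cong Z\cup J$) shows $\A_\na$ is a free factor system. Deducing item~(1) in the nongeometric case from item~(2) is also the paper's tacit logic.

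Two genuine problems remain. First, your route to ``geometric $\Rightarrow$ not a free factor system'' is different from the paper's and has an unfilled gap that you yourself flag: you want to conclude that $\langle\rho_r\rangle$ cannot be a free factor of $F_n$ from the facts that $j\from S\hookrightarrow X$ is $\pi_1$-injective and $\bdy_0 S$ is a surface boundary component, but those facts alone don't rule it out (boundary curves of $\pi_1$-injective surface pieces can be free factors in other settings). The paper argues indirectly: if $[\pi_1 K]$ were a free factor system, then since it carries $[\bdy_0 S],\ldots,[\bdy_m S]$, Proposition~I.\brefGM{PropGeomEquiv}~(\brefGM{ItemBoundarySupportCarriesLambda}) would force $[\pi_1 S]\sqsubset[\pi_1 K]$; but taking any simple closed curve $c\subset S$ not peripheral, Lemma~I.\brefGM{LemmaLImmersed}~(\brefGM{ItemSeparationOfSAndL}) gives $[j(c)]\notin[\pi_1 K]$ while $[j(c)]\in[\pi_1 S]$, a contradiction. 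That is the argument you are missing.

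Second, your item~(3) as stated is not correct: malnormality does \emph{not} follow merely from $h\from K\to G$ being an immersion. An immersed loop labelled $a^2$ in a rose gives $\langle a^2\rangle$, which is not malnormal. The Stallings-style criterion requires actually checking that distinct lifts of components of $K$ to $\wt G$ intersect in at most a point, which is not automatic from the immersion property. What the paper actually uses in Cases~1 and~2 is that $K$ is identified with a \emph{subgraph of a marked graph} ($Z\subset G$, or $Z\cup J\subset\wh G$), from which malnormality of the associated free factor system is immediate; and in the geometric Case~3 it invokes Lemma~I.\brefGM{LemmaLImmersed}~(\brefGM{ItemComplementMalnormal}). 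If you make the subgraph identification explicit before asserting malnormality in the nongeometric cases, that part of your argument becomes correct; as written, the appeal to ``immersed core subgraphs'' is too weak.
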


\begin{proof} First we show that any subgroup $A$ for which $[A] \in \A_\na(\Lambdapp)$ is nontrivial and proper, as required for a vertex group system. Nontriviality follows because only noncontractible components of $K$ are used. To prove properness: if $\hat\rho_r$ is trivial then any circuit containing an edge of $H_r$ is not carried by $\A_\na(\Lambdapp)$; if $\hat\rho = \rho_r$ is nontrivial then any circuit containing an edge of $H_r$ but not containing $\rho_r$ is not carried by $\A_\na(\Lambdapp)$.

We adopt the notation of Definition~\ref{defn:Z}. By applying Fact~\refGM{FactEGNielsenCrossings} and Proposition~\refGM{PropGeomEquiv}, when $\Lambdapp$ is not geometric then $\hat\rho_r$ is either trivial or a nonclosed Nielsen path, and when $\Lambdapp$ is geometric then $\hat\rho_r$ is a closed Nielsen path. We prove \pref{ItemA_naNP}---\pref{ItemA_naMalnormal} by considering these three cases of $\hat\rho_r$ separately.

\smallskip

\textbf{Case 1: $\hat\rho_r$ is trivial.} In this case $K=Z$, and $\A_\na(\Lambdapp)$ is the free factor system associated to the subgraph $Z \subset G$. Item~\pref{ItemA_naMalnormal} follows immediately.

\smallskip

\textbf{Case 2: $\hat\rho_r = \rho_r$ is a nonclosed Nielsen path.} We prove that $\A_\na(\Lambdapp)$ is a free factor system following an argument of \BookOne\ Lemma~5.1.7. By Fact~\trefGM{FactEGNielsenCrossings}{ItemEGNielsenNotClosed} there is an edge $E \subset H_r$ that is crossed exactly once by $\rho_r$. We may decompose $\rho_r$ into a concatenation of subpaths $\rho_r = \sigma E \tau$ where $\sigma,\tau$ are paths in $G_r \setminus \Int(E)$. Let $\wh G$ be the graph obtained from $G  \setminus \Int(E)$ by attaching an edge $J$, letting the initial and terminal endpoints of $J$ be equal to the initial and terminal endpoints of~$\rho_r$, respectively. The identity map on $G \setminus \Int(E)$ extends to a map $h \from \wh G \to G$ that takes the edge $J$ to the path $\rho_r$, and to a homotopy inverse $\bar h \from G \to \wh G$ that takes the edge $E$ to the path~$\bar\sigma J \bar\tau$. We may therefore view $\wh G$ as a marked graph, pulling the marking on $G$ back via $h$. Notice that $K$ may be identified with the subgraph $Z \union J \subset \wh G$, in such a way that the map $h \from \wh G \to G$ is an extension of the map $h \from K \to G$ as originally defined. It follows that $\A_\na(\Lambdapp)$ is the free factor system associated to the subgraph $Z \union J$.

In this case, as in Case~1, item~\pref{ItemA_naMalnormal} follows immediately because of the identification of $K$ with a subgraph of the marked graph $\wh G$.

\smallskip

\textbf{Case 3: $\hat\rho_r = \rho_r$ is a closed Nielsen path.} In this case $H_r$ is geometric. Adopting the notation of the geometric model $X$ for $H_r$, Definition~\refGM{DefGeomModel}, by Remark~\ref{RemarkGeometricK} we have $\A_\na(\Lambdapp)=[\pi_1 K]$ for a subgraph $K \subset L$ containing $j(\bdy S)$. Applying Proposition~\refGM{PropGeomVertGrSys} it follows that $[\pi_1 K]$ is a vertex group system. 

If $\A_\na \Lambdapp = [\pi_1 K]$ were a free factor system then, since each of the conjugacy classes $[\bdy_0 S],\ldots,[\bdy_m S]$ is supported by $[\pi_1 K]$, it would follow by Proposition~\trefGM{PropGeomEquiv}{ItemBoundarySupportCarriesLambda} that $[\pi_1 S] \sqsubset [\pi_1 K]$. However, since $S$ supports a pseudo-Anosov mapping class, it follows that $S$ contains a simple closed curve $c$ not homotopic to a curve in $\bdy S$. By Lemma~\trefGM{LemmaLImmersed}{ItemSeparationOfSAndL} we have $[j(c)] \not\in [\pi_1 K]$ while $[j(c)] \in [\pi_1 S]$. This is a contradiction and so $\A_\na\Lambdapp$ is not a free factor system.

In this case, item~\pref{ItemA_naMalnormal} is a consequence of Lemma~\trefGM{LemmaLImmersed}{ItemComplementMalnormal}.
\end{proof}

Item~\pref{Item:Groupoid} in the next lemma states that $\<Z,\hat \rho_r\>$ is a groupoid, by which we mean that the tightened concatenation of any two paths in $\<Z,\hat \rho_r\>$ is also a path in $\<Z,\hat \rho_r\>$ as long as that concatenation is defined. For example, the concatenation of two distinct rays in $\<Z,\hat \rho_r\>$ with the same base point tightens to a line in $\<Z,\hat \rho_r\>$. 

\begin{lemma} \label{LemmaZPClosed}  Assuming the notation of Definitions~\ref{defn:Z},
\begin{enumerate}
\item \label{Item:BEQuivZP}
The map $h$ induces a bijection between $\wh\B(K)$ and $\<Z,\hat \rho_r\>$.
\item \label{Item:Groupoid}
$\<Z,\hat \rho_r\>$ is a groupoid. 
\item \label{item:ZP=NA} 
The set of lines in $G$ carried by $\<Z,\hat \rho_r\>$ is the same as the set of lines carried by $\A_{\na}(\Lambdapp)$. The set of rays in $G$ asymptotically equivalent to a ray carried by $\<Z,\hat\rho_r\>$ is the same as the set of rays carried by $\A_\na(\Lambdapp)$.
\item\label{Item:circuits} The set of circuits in $G$ carried by $\<Z,\hat \rho_r\>$ is the same as the set of circuits carried by $\A_{\na}(\Lambdapp)$. 
\item \label{item:closed} The set of lines carried by $\<Z,\hat \rho_r\>$ is closed in the weak topology.
\item\label{item:UniqueLift}   If $[A_1], [A_2] \in \A_{\na}(\Lambdapp)$ and if $A_1 \ne A_2$ then $A_1 \cap A_2 = \{1\}$ and $\partial A_1 \cap \partial A_2 = \emptyset$.
\end{enumerate}
\end{lemma}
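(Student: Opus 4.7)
The plan is to let Part~\itemref{Item:BEQuivZP} do most of the work and derive the remaining parts from it, treating $h \from K \to G$ throughout as the underlying immersion. For Part~\itemref{Item:BEQuivZP}, the edges of $K$ are precisely the edges of $Z$ together with the single edge $E_\rho$, and $h$ sends these to the corresponding edges of $Z$ and to $\hat\rho_r^{\pm 1}$, so $h(\wh\B(K))$ is by construction contained in $\<Z,\hat\rho_r\>$. Conversely any element of $\<Z,\hat\rho_r\>$ is by definition a concatenation of $Z$-edges and copies of $\hat\rho_r^{\pm 1}$ whose consecutive pieces share endpoints in $G$, and a short case check on the four possible endpoint-transition types shows that the identifications built into the definition of $K$ are exactly what is needed to lift such a concatenation to a tight path in $K$. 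Injectivity of the induced map on $\wh\B(K)$ is the standard uniqueness of path lifts under an immersion.

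Part~\itemref{Item:Groupoid} then follows at once: lift two concatenable paths $\alpha,\beta \in \<Z,\hat\rho_r\>$ to $\wt\alpha,\wt\beta$ in $K$, concatenate there using the same endpoint-identification analysis from Part~\itemref{Item:BEQuivZP}, tighten, and apply $h$ to recover the tightened concatenation in $G$. Parts~\itemref{item:ZP=NA} and~\itemref{Item:circuits} are immediate from Part~\itemref{Item:BEQuivZP} together with the observation that any bi-infinite path or circuit in $K$ lies in a single noncontractible component $K_i$ (trees contain no lines and no circuits), while the lines and conjugacy classes carried by $[\pi_1 K_i] \in \A_\na(\Lambdapp)$ are exactly the images under $h$ of those bi-infinite paths and loops in $K_i$.

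The main obstacle is Part~\itemref{item:closed}, the weak-closure assertion, which is the only step that is not purely formal. Suppose $\gamma_n \to \gamma$ weakly in $\B$ with each $\gamma_n$ carried by $\<Z,\hat\rho_r\>$, and let $\wt\gamma_n \in \wh\B(K)$ be the lift furnished by Part~\itemref{Item:BEQuivZP}. Fix a vertex $v$ of $G$ traversed by $\gamma$; by weak convergence $v$ eventually lies on $\gamma_n$, so $\wt\gamma_n$ passes through some element of the finite set $h^{-1}(v) \subset K$. Pigeonhole yields a single lift $\wt v$ used by $\wt\gamma_n$ for infinitely many $n$, and along this subsequence every finite subpath of $\gamma$ through $v$ is eventually a subpath of $\gamma_n$ through $v$ and so lifts uniquely at $\wt v$. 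These lifts for longer and longer finite subpaths are forced to agree on overlaps by uniqueness of immersion lifts, so a diagonal argument assembles them into a bi-infinite lift of $\gamma$ in $K$, giving $\gamma \in h(\wh\B(K)) = \<Z,\hat\rho_r\>$.

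Finally, Part~\itemref{item:UniqueLift} is a direct consequence of the malnormality of $\A_\na(\Lambdapp)$ established in Proposition~\ref{PropVerySmallTree}\pref{ItemA_naMalnormal}: for a subgroup system of a free group, malnormality is equivalent to the statement that any two distinct subgroups which are each conjugate into a member of the system intersect trivially and have disjoint boundary sets in $\partial F_n$. Applying this to subgroups $A_1 \ne A_2$ with $[A_1], [A_2] \in \A_\na(\Lambdapp)$, whether or not $[A_1] = [A_2]$, gives the stated conclusion.
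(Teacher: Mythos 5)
Your proof is correct and follows essentially the same route as the paper: establish the bijection between $\wh\B(K)$ and $\<Z,\hat\rho_r\>$ induced by the immersion $h$, and then derive the remaining items from it. Two small points are worth flagging. For Part~\itemref{item:closed}, the paper simply cites Fact~\refGM{FactLinesClosed} (closedness of the set of lines in the image of a graph immersion) together with Part~\itemref{Item:BEQuivZP}, whereas you prove the content of that fact directly via a pigeonhole-and-diagonal argument; your argument is sound, though you should be a bit more careful to track which \emph{occurrence} of the vertex $v$ in $\gamma_n$ is being lifted when you pigeonhole over $h^{-1}(v)$, since $\gamma_n$ may pass through $v$ many times. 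For Part~\itemref{item:UniqueLift}, your assertion that malnormality \emph{is equivalent to} the stated conclusion is a slight overreach: malnormality of the subgroup system gives the trivial-intersection claim directly, but the disjointness of boundaries in $\partial F_n$ is a separate (if standard) consequence, which the paper obtains by additionally invoking Fact~\refGM{FactBoundaries}; your argument is still correct in substance, just glossing the second step.
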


\begin{proof} We make use of four evident properties of the immersion $h:K \to G$. The first is that every path in $K$ with endpoints, if any, at vertices is mapped by $h$ to an element of $\<Z,\hat \rho_r\>$. The second is that $h$ induces a bijection between the vertex sets of $K$ and of $Z \cup \partial \hat \rho_r$. The third is that for each edge $E$ of $Z$,  there is a unique edge of $K$ that projects to $E$ and that no other subpath of $K$  has at least one endpoint at a vertex and projects to~$E$. The last is that if $\hat \rho_r$ is non-trivial then it has a unique lift to $K$ (because its  unique illegal turn of height $r$ does). Together these imply~\pref{Item:BEQuivZP} which immediately implies  \pref{Item:Groupoid}. 

Let $K_1,\ldots,K_J$ be the cores of the noncontractible components of $K$. Let $\ti h_j \from \wt K_j \inject \wt G$ be a lift of $h \restrict K_j$ to an embedding of universal covers. By Definition~\ref{defn:Z} we have $\A_\na(\Lambdapp) = \{[A_1],\ldots,[A_J]\}$ where $\wt K_j$ is the minimal subtree of the action of $A_j$ on $\wt G$, and where $\bdy \wt K_j = \bdy A_j \subset \bdy\wt G = \bdy F_n$. Given a line $\ell \in \B(G)$, the first sentence of item~\pref{item:ZP=NA} follows from the fact that a line is carried by $\A_\na(\Lambdapp)$ if and only if $\ell$ lifts to a line with both endpoints in some $\bdy A_j$, if and only $\ell$ lifts via some $\ti h_j$ to $\wt K_j$, if and only if $\ell$ lifts via $h$ to some $K_j$, if and only if $\ell \in \<Z,\hat\rho\>$. Given a ray $\rho \in \wh\B(G)$, the second sentence of item~\pref{item:ZP=NA} follows from the fact that the ray is carried by $\A_\na(\Lambdapp)$ if and only if $\rho$ lifts to a ray with end in some $\bdy A_j$, if and only if some subray of $\rho$ lifts via some $\ti h_j$ to $\wt K_j$, if and only if some subray of $\rho$ lifts via $h$ to some $K_j$, if and only if some subray of $\rho$ is in $\<Z,\hat\rho\>$. 

Item~\pref{Item:circuits} follows from \pref{item:ZP=NA} using the natural bijection between periodic lines and circuits. Item~\pref{item:closed} follows from~\pref{Item:BEQuivZP} and Fact~\refGM{FactLinesClosed}. Item \pref{item:UniqueLift} follows from Proposition~\ref{PropVerySmallTree}~\pref{ItemA_naMalnormal} and Fact~\refGM{FactBoundaries}.
\end{proof}

The following lemma is based on Proposition~6.0.4 and Corollary~6.0.7 of \BookOne.


\begin{lemma} \label{defining Z} Assuming the notation of Definitions~\ref{defn:Z}, we have: 
\begin{enumerate}
\item\label{ItemZPEdgesInv}
If $E$ is an edge of $Z$ then 
$f_\#(E) \in \langle Z,\hat \rho_r \rangle $.
\item\label{ItemZPPathsInv}
$\<Z,\hat \rho_r\>$ is $f_\#$-invariant.
\item\label{ItemZPAnyPaths}
If $\sigma \in\<Z,\hat \rho_r\>$ then $\sigma$ is not weakly attracted to $\Lambdapp$.
\item\label{ItemZPFinitePaths}
For any finite path $\sigma$ in $G$ with endpoints at fixed vertices, the converse to \pref{ItemZPAnyPaths} holds: if $\sigma$ is not weakly attracted to $\Lambda$ then $\sigma \in \<Z,\hat \rho_r\>$.
\item \label{ItemBijection} $f_\#$ restricts to bijections of the the following sets: lines in $\<Z,\hat \rho_r\>$; finite paths in $\<Z,\hat \rho_r\>$ whose endpoints are fixed by $f$; and circuits in $\<Z, \hat\rho_r\>$.
\end{enumerate}
\end{lemma}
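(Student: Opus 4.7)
The plan is to prove items (1)--(5) in order. Items (1) and (2) are direct consequences of the complete-splitting structure of the \ct\ $f$. Item (3) combines $f_\#$-invariance, weak closure of $\langle Z, \hat\rho_r\rangle$-lines (Lemma~\ref{LemmaZPClosed}\itemref{item:closed}), and a structural obstruction to generic leaves of $\Lambdapp$ lying in $\langle Z, \hat\rho_r\rangle$. Item (4) is the substantive converse assertion and requires a ``reverse'' case analysis using the CT axioms. Item (5) then reduces to (4) via the homotopy equivalence property of $f$.

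For item (1), I would invoke the defining \ct\ property that $f_\#(E)$ is completely split (Definition~\refGM{DefCompleteSplitting}) and verify that each type of term in the splitting lies in $\langle Z, \hat\rho_r\rangle$. A single-edge term $E'$ in an irreducible stratum $H_j$ cannot lie in $H_r$ (that would attract $E$ to $\Lambdapp$, contradicting $E \subset Z$), and must satisfy $H_j \subset Z$ since otherwise some edge of $H_j$ appearing in $f_\#(E)$ would make $E$ attracted to $\Lambdapp$. A height-$q$ \iNp\ term equals $\hat\rho_r^{\pm 1}$ when $q=r$ (by Fact~\refGM{FactEGNPUniqueness}), and when $q \ne r$ its stratum $H_q$ lies in $Z$ by the Remark in Definition~\ref{defn:Z}, after which the iNp decomposes into $H_q$-edges and lower Nielsen paths and one concludes by induction on~$q$. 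Exceptional paths and zero-stratum connecting paths are handled by the same Remark together with (Zero Strata): all constituent pieces lie in $Z$. Item (2) then follows by writing any $\sigma \in \langle Z,\hat\rho_r\rangle$ as a concatenation of $Z$-edges and $\hat\rho_r^{\pm 1}$ copies, applying (1) to each edge and $f_\#(\hat\rho_r)=\hat\rho_r$ on each Nielsen-path piece, and invoking Lemma~\ref{LemmaZPClosed}\itemref{Item:Groupoid} to absorb the tightening.

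Item (3) follows from item (2), which keeps every iterate $f_\#^k(\sigma)$ in $\langle Z, \hat\rho_r\rangle$, combined with Lemma~\ref{LemmaZPClosed}\itemref{item:closed}: any weak limit of $f_\#^k(\sigma)$ lies in $\langle Z, \hat\rho_r\rangle$. The remaining point is that no generic leaf $\lambda$ of $\Lambdapp$ lies in $\langle Z, \hat\rho_r\rangle$. For this I would exploit the Perron--Frobenius growth of $H_r$-legal subpaths: $\lambda$ is a weak limit of iterates of an $H_r$-edge, so it contains $H_r$-legal subpaths of arbitrarily large length; but in $\langle Z, \hat\rho_r\rangle$ the $H_r$-edges of any line are confined to occurrences of $\hat\rho_r^{\pm 1}$, whose $H_r$-legal subpaths have length uniformly bounded by $|\hat\rho_r|$, a contradiction.

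For item (4), I would invoke the \ct\ property (from \recognition) that any finite path $\sigma$ with endpoints at fixed vertices has $f_\#^k(\sigma)$ completely split for all sufficiently large $k$, with the complete splitting compatible with further $f_\#$-iteration. Non-attraction to $\Lambdapp$ is inherited by each term of a complete splitting (else further iteration would produce long $\Lambdapp$-subpaths inside $f_\#^{k+j}(\sigma)$), so by the case analysis of (1) each term of the complete splitting of $f_\#^k(\sigma)$ lies in $\langle Z, \hat\rho_r\rangle$. Descending to $\sigma$ itself, I would use the CT feature that $\sigma$ admits a complete splitting whose terms $f_\#^k$-iterate into refinements of the splitting of $f_\#^k(\sigma)$; a reverse case analysis (for instance, a single-edge term $E \subset H_j$ with $H_j \not\subset Z$ would eventually contribute long $H_r$-legal subpaths in $f_\#^k(E)$ outside $\hat\rho_r^{\pm 1}$-copies, contradicting $f_\#^k(E) \in \langle Z, \hat\rho_r\rangle$) then places each term of $\sigma$'s splitting in $\langle Z, \hat\rho_r\rangle$. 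Finally, item (5) reduces to surjectivity of $f_\#$: injectivity is automatic, and for surjectivity one takes the unique $f_\#$-preimage $\sigma$ of $\sigma' \in \langle Z, \hat\rho_r\rangle$ (which is not weakly attracted to $\Lambdapp$ by (3)) and applies (4) or its circuit/line analogue to conclude $\sigma \in \langle Z, \hat\rho_r\rangle$. The main obstacle is the pull-back step in (4): the bookkeeping for how individual terms of $\sigma$'s complete splitting relate to those of $f_\#^k(\sigma)$ is delicate and requires careful invocation of the CT axioms on compatibility of complete splittings with iteration.
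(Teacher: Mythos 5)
Items (1), (2), and (3) are essentially right and match the paper's structure, though your proof of (3) detours through the closedness of $\<Z,\hat\rho_r\>$-lines (Lemma~\ref{LemmaZPClosed}\itemref{item:closed}), which is stated only for lines and does not apply directly to finite paths $\sigma$; the closedness step is unnecessary, since your key observation --- that $H_r$-legal subpaths in any element of $\<Z,\hat\rho_r\>$ are uniformly bounded because $H_r$-edges occur only inside copies of $\hat\rho_r^{\pm 1}$ --- combines immediately with the $f_\#$-invariance from (2) to give the contradiction.

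The real problem is the logical ordering of (4) and (5). Your ``descent'' step in (4) relies on ``the CT feature that $\sigma$ admits a complete splitting whose terms $f_\#^k$-iterate into refinements of the splitting of $f_\#^k(\sigma)$.'' There is no such feature: a finite path with endpoints at fixed vertices is \emph{not} in general completely split --- only sufficiently high iterates $f_\#^k(\sigma)$ are guaranteed to be (Fact~\refGM{FactEvComplSplit}). You have proved that $f_\#^k(\sigma) \in \<Z,\hat\rho_r\>$, but you have no mechanism to descend to $\sigma$ itself. At the same time, your proof of (5) explicitly calls on (4) for surjectivity, so if you were to fix the descent in (4) by invoking (5), the argument would be circular.

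The paper resolves this by reversing the logical order. Item (5) is proved independently of (4): the bijectivity of $f_\#$ on lines and on finite paths with fixed endpoints in an $f_\#$-invariant path groupoid is exactly Corollary~6.0.7 of \BookOne, which only needs (2). With (5) in hand, the general case of (4) reduces to the completely split case by replacing $\sigma$ with $f^k_\#(\sigma)$ --- legal because (5) makes this a bijection on the relevant sets --- and then applying Fact~\refGM{FactEvComplSplit}. Your case analysis for the completely split situation is the right content; you need to decouple (5) from (4) by citing the external bijectivity result, and then use (5) to legitimize the reduction in (4).
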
 

\begin{proof} In this proof we shall freely use that $\<Z,\hat\rho_r\>$ is a groupoid, Lemma~\ref{LemmaZPClosed}~\pref{Item:Groupoid}.

$\<Z,\hat \rho_r\>$ contains each fixed or linear edge by construction. Given an indivisible Nielsen path $\rho_i$ of height $i$, we prove by induction on $i$ that $\rho_i$ is in $\<Z,\hat\rho_r\>$. If $H_i$ is \neg\ this follows from (\neg\ Nielsen Paths) and the induction hypothesis. If $H_i$ is \eg\ then Fact~\trefGM{FactNielsenBottommost}{ItemBottommostEdges} applies to show that $H_i \subset Z$; combining this with Fact~\trefGM{FactNielsenBottommost}{ItemBottommostEdges} again and with the induction hypothesis we conclude that $\rho_i \in \<Z,\hat\rho_r\>$. 

Since all indivisible Nielsen paths and all fixed edges are contained in $\<Z,\hat\rho_r\>$, it follows that all Nielsen path are contained in $\<Z,\hat\rho_r\>$, which immediately implies that $\<Z,\hat \rho_r\>$ contains all exceptional paths. 

Suppose that $\tau = \tau_1 \cdot \ldots \cdot \tau_m$ is a complete splitting of a finite path that is not contained in a zero stratum. Each $\tau_i$ is either an edge in an irreducible stratum, a taken connecting path in a zero stratum, or, by the previous paragraph, a term which is not weakly attracted to $\Lambda$ and which is contained in $\<Z,\hat\rho_r\>$. If $\tau_i$ is a taken connecting path in a zero stratum $H_t$ that is enveloped by an \eg\ stratum $H_s$ then, by definition of complete splitting, $\tau_i$ is a maximal subpath of $\tau$ in $H_t$; since $\tau \not\subset H_t$ it follows that $m \ge 2$, and by applying (Zero Strata) it follows that at least one other term $\tau_j$ is an edge in $H_s$. In conjunction with the second Remark in Definitions~\ref{defn:Z}, this proves that $\tau$ is contained in $\<Z,\hat \rho_r\>$ if and only if each $\tau_i$ that is an edge in an irreducible stratum is contained in $Z$ if and only if $\tau$ is not weakly attracted to $\Lambdapp$. 
 
We apply this in two ways. First, this proves item \pref{ItemZPFinitePaths} in the case that $\sigma$ is completely split. Second, applying this to $\tau = f_\#(E)$ where $E$ is an edge in $Z$, item \pref{ItemZPEdgesInv} follows in the case that $f_\#(E)$ is not contained in any zero stratum. Consider the remaining case that $\tau=f_\#(E)$ is contained in a zero stratum $H_t$ enveloped by the \eg\ stratum $H_s$. By definition of complete splitting, $\tau=\tau_1$ is a taken connecting path. By Fact~\refGM{FactEdgeToZeroConnector} the edge $E$ is contained in some zero stratum $H_{t'}$ enveloped by the same \eg\ stratum $H_s$. Since $E \subset Z$, it follows that $H_s \subset Z$, and so $H^z_s \subset Z$, and so $\tau \subset Z$, proving \pref{ItemZPEdgesInv}.

Item \pref{ItemZPPathsInv} follows from item \pref{ItemZPEdgesInv}, the fact that $f_\#(\hat \rho_r) = \hat \rho_r$ and the fact that $\<Z,\hat \rho_r\>$ is a groupoid. 

 Every generic leaf of $\Lambdapp$ contains subpaths in $H_r$ that are not subpaths of $\hat \rho_r$ or $\hat \rho_r^{-1}$ and hence not subpaths in any element of $\<Z,\hat \rho_r\>$. Item \pref{ItemZPAnyPaths} therefore follows from item \pref{ItemZPPathsInv}. 
 
To prove \pref{ItemBijection}, for lines and finite paths the implication $\pref{ItemZPPathsInv} \Rightarrow \pref{ItemBijection}$ follows from Corollary~6.0.7 of \BookOne. For circuits, use the natural bijection between circuits and periodic lines, noting that this bijection preserves membership in $\<Z,\hat\rho_r\>$.
 
 It remains to prove \pref{ItemZPFinitePaths}. By \pref{ItemBijection}, there is no loss of generality in replacing $\sigma$ with $f^k_\#(\sigma)$ for any $k \ge 1$. By Fact~\refGM{FactEvComplSplit} this reduces \pref{ItemZPFinitePaths} to the case that $\sigma$ is completely split which we have already proved.
\end{proof} 


\subsection{Applications and properties of the nonattracting subgroup system.}
\label{SectionAppsAndPropsANA}
We now show that the nonattracting subgroup system $\A_\na(\Lambdapp)$ deserves its name.

\begin{corollary}\label{CorNAWellDefined} For any rotationless $\phi \in \Out(F_n)$ and $\Lambdapp \in \L(\phi)$, a conjugacy class $[a]$ in $F_n$ is not weakly attracted to $\Lambdapp$ if and only if it is carried by $\A_{\na}(\Lambdapp)$. 
\end{corollary}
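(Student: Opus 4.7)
The plan is to deduce this from the two closely parallel results already established in Lemmas~\ref{LemmaZPClosed} and~\ref{defining Z}, by translating between conjugacy classes and circuits in a \ct\ $\fG$ representing $\phi$ and then reducing to the completely split case exactly as in the proof of Lemma~\ref{defining Z}~\pref{ItemZPFinitePaths}.

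Fix a \ct\ $\fG$ representing $\phi$ whose \eg\ stratum $H_r$ corresponds to $\Lambdapp$, and let $\sigma$ be the circuit in $G$ realizing $[a]$. The reverse implication is immediate: if $[a]$ is carried by $\A_\na(\Lambdapp)$, then by Lemma~\ref{LemmaZPClosed}~\pref{Item:circuits} the circuit $\sigma$ lies in $\<Z,\hat\rho_r\>$, and hence by Lemma~\ref{defining Z}~\pref{ItemZPAnyPaths} is not weakly attracted to $\Lambdapp$. Since weak attraction of $[a]$ is by definition weak attraction of $\sigma$, this gives one direction of the corollary.

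For the forward implication, assume $[a]$ is not weakly attracted to $\Lambdapp$. First I would reduce to the completely split case: since $f_\#$ is a bijection on all circuits of $G$, weak attraction is preserved and reflected under $f_\#$, so for every $k \ge 1$ the circuit $f^k_\#(\sigma)$ is also not weakly attracted to $\Lambdapp$. By the ``eventually completely split'' Fact~\refGM{FactEvComplSplit}, applied to circuits, there is a $k \ge 1$ such that $f^k_\#(\sigma)$ is completely split. Replacing $\sigma$ by $f^k_\#(\sigma)$, I may assume that $\sigma$ itself is completely split, $\sigma = \tau_1 \cdot \ldots \cdot \tau_m$ (and not contained in a single zero stratum, since a circuit must meet an irreducible stratum). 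The argument in the proof of Lemma~\ref{defining Z}~\pref{ItemZPFinitePaths}---classifying each $\tau_i$ as a Nielsen path, exceptional path, edge in an irreducible stratum, or taken connecting path in a zero stratum, and using (Zero Strata) together with the second Remark in Definitions~\ref{defn:Z}---applies verbatim to a completely split circuit and shows that $\sigma \in \<Z,\hat\rho_r\>$ precisely because $\sigma$ is not weakly attracted to $\Lambdapp$.

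Finally, I would pull $\sigma$ back to the original circuit: Lemma~\ref{defining Z}~\pref{ItemBijection} states that $f_\#$ restricts to a bijection on the circuits in $\<Z,\hat\rho_r\>$, and since $f_\#$ is also a bijection on all circuits of $G$, the preimage of any $\<Z,\hat\rho_r\>$-circuit under $f_\#$ is again in $\<Z,\hat\rho_r\>$; iterating $k$ times shows that the original circuit representing $[a]$ lies in $\<Z,\hat\rho_r\>$. Lemma~\ref{LemmaZPClosed}~\pref{Item:circuits} then identifies this with being carried by $\A_\na(\Lambdapp)$. The main subtle point is the legitimacy of the complete splitting step for a circuit (as opposed to a finite path with fixed endpoints), but this is exactly the content of applying Fact~\refGM{FactEvComplSplit} in the circuit setting together with the bijectivity provided by Lemma~\ref{defining Z}~\pref{ItemBijection}.
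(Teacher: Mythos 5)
Your proof is correct and follows essentially the same approach as the paper's: reduce to circuits via Lemma~\ref{LemmaZPClosed}~\pref{Item:circuits}, pass to an iterate to get complete splitting via Fact~\refGM{FactEvComplSplit}, and invoke the analysis of Lemma~\ref{defining Z}. The one cosmetic difference is that the paper takes a further iterate so that a coarsening of the complete splitting yields finite subpaths with $f$-fixed endpoints, and then applies Lemma~\ref{defining Z}~\pref{ItemZPFinitePaths} as stated, whereas you re-run the body of that lemma's proof directly on the completely split circuit; both are legitimate, and your explicit use of the bijection in Lemma~\ref{defining Z}~\pref{ItemBijection} to pull the conclusion back to the original circuit makes the (backward) invariance that the paper treats implicitly more transparent.
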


\begin{proof} Let $f \from G \to G$ be a \ct\ representing a rotationless $\phi \in \Out(F_n)$ and assume the notation of Definitions~\ref{defn:Z}. By Lemma~\ref{LemmaZPClosed}~\pref{Item:circuits}, it suffices to show that a circuit in $G$ is not weakly attracted to $\Lambdapp$ under iteration by $f_\#$ if and only if it is carried by $\<Z,\hat\rho_r\>$. Both the set of circuits in $\<Z,\hat\rho_r\>$ and the set of circuits that are not weakly attracted to $\Lambdapp$ are $f_\#$-invariant. We may therefore replace $\sigma$ with any $f^k_\#(\sigma)$ and hence may assume that $\sigma$ is completely split. After taking a further iterate, we may assume that some coarsening of the complete splitting of $\sigma$ is a splitting into subpaths whose endpoints are fixed by~$f$. Lemma~\ref{defining Z}~(\ref{ItemZPFinitePaths}) completes the proof.
\end{proof}

\begin{corollary}\label{CorNASubgroups}
For any rotationless $\phi \in \Out(F_n)$ and $\Lambdapp \in \L(\phi)$, and for any finite rank subgroup $B \subgroup F_n$, if each conjugacy class in $B$ is not weakly attracted to $\Lambdapp$ then there exists a subgroup $A \subgroup F_n$ such that $B \subgroup A$ and $[A] \in \A_\na(\Lambdapp)$.
\end{corollary}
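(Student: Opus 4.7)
The plan is to upgrade the cyclic-subgroup statement of Corollary~\ref{CorNAWellDefined} to a subgroup-level statement by realizing $B$ via a finite immersed graph and lifting through the immersion $h \from K \to G$ of Definitions~\ref{defn:Z}. Since $B$ has finite rank, the core of the covering space of $G$ corresponding to $B$ is a finite connected graph $H$ with basepoint $v_0$, equipped with an immersion $p \from H \to G$ satisfying $p_*(\pi_1(H,v_0)) = B$. By the hypothesis and Corollary~\ref{CorNAWellDefined}, each conjugacy class in $B$ is carried by $\A_\na(\Lambdapp)$, and hence by Lemma~\ref{LemmaZPClosed}~(\ref{Item:circuits}), each circuit in $H$ projects under $p$ into the groupoid $\<Z,\hat\rho_r\>$.

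The central step is to lift $p$ through $h$, producing an immersion $\tilde p \from H \to K$ with $h \circ \tilde p = p$; equivalently, to exhibit a component $K_0 \subset K$ and a lift $\tilde v_0 \in K_0$ of $p(v_0)$ such that $B \subset h_*(\pi_1(K_0,\tilde v_0))$. The bijection of Lemma~\ref{LemmaZPClosed}~(\ref{Item:BEQuivZP}) between $\wh\B(K)$ and $\<Z,\hat\rho_r\>$ guarantees that each circuit in $H$ lifts uniquely to a circuit in $K$, but a priori distinct circuits could lift into distinct components. Coherence across all generators of $\pi_1(H,v_0)$ will be forced by the malnormality of $\A_\na(\Lambdapp)$ (Proposition~\ref{PropVerySmallTree}~(\ref{ItemA_naMalnormal})) and the disjoint-boundary property of distinct components (Lemma~\ref{LemmaZPClosed}~(\ref{item:UniqueLift})): if $b_1, b_2 \in B$ had cyclic-subgroup lifts into distinct components of $\A_\na(\Lambdapp)$, say $b_1 \in g_1 A_1 g_1^\inv$ and $b_2 \in g_2 A_2 g_2^\inv$ with $g_1 A_1 g_1^\inv \ne g_2 A_2 g_2^\inv$, then a ping-pong-style analysis of the elements $b_1^n b_2 \in B$ would produce fixed-point pairs converging to $\bdy(g_1 A_1 g_1^\inv)$ while being constrained to lie in $\bdy(g_n A_n g_n^\inv)$ for varying $n$, contradicting disjointness.

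Once the lift $\tilde p$ is constructed, the connectedness of $H$ places $\tilde p(H)$ in a single component $K_0$, and then $A := h_*(\pi_1(K_0,\tilde v_0))$ is a subgroup of $F_n$ with $[A] \in \A_\na(\Lambdapp)$ and $B = p_*(\pi_1(H,v_0)) = h_* \tilde p_*(\pi_1(H,v_0)) \subset A$, as desired. The main obstacle is the coherence argument of the second paragraph, which is where the malnormal structure of $\A_\na(\Lambdapp)$ enters essentially; a perhaps cleaner alternative route would be to pass to the very small $F_n$-tree $T$ dual to the vertex group system $\A_\na(\Lambdapp)$, observe from Corollary~\ref{CorNAWellDefined} that every element of $B$ acts elliptically on $T$, and then invoke a Serre-type theorem to conclude that the finitely generated subgroup $B$ must fix a vertex of $T$, whose stabilizer is then the desired $A$.
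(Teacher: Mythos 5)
Your concluding ``alternative route'' is not just cleaner --- it is exactly the paper's proof. Corollary~\ref{CorNAWellDefined} shows every conjugacy class in $B$ is carried by $\A_\na(\Lambdapp)$, Proposition~\ref{PropVerySmallTree}~\pref{ItemA_naNP} shows $\A_\na(\Lambdapp)$ is a vertex group system, and the Serre-type conclusion that a finitely generated subgroup acting with every element elliptic lies in a single vertex stabilizer is packaged as Lemma~\refGM{LemmaVSElliptics} of Part~I. You should promote that sketch from afterthought to main proof.

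The graph-lifting route that occupies the bulk of your proposal does have a genuine gap at the coherence step, and the ping-pong sketch is misstated. As $n \to \infty$ the attractor of $b_1^n b_2$ does converge to the attractor of $b_1$, which lies in $\bdy(g_1 A_1 g_1^\inv)$; but the repeller converges to the image under $b_2^{-1}$ of the repeller of $b_1$, which lies in $\bdy(b_2^{-1} g_1 A_1 g_1^\inv b_2)$ --- a \emph{different} conjugate, by malnormality, since $b_2 \notin g_1 A_1 g_1^\inv$. So the fixed-point pairs do not converge into a single $\bdy(g_1 A_1 g_1^\inv)$ as claimed. Moreover, the disjointness guaranteed by Lemma~\ref{LemmaZPClosed}~\pref{item:UniqueLift} is not a uniform topological separation in $\bdy F_n$, so a sequence of fixed-point pairs confined to varying $\bdy(g_n A_n g_n^\inv)$ can perfectly well accumulate onto a fixed boundary set; ``contradicting disjointness'' is not by itself a contradiction. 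To actually close the argument you would need the weak closedness of the set of $\A_\na$-carried lines (Lemma~\ref{LemmaZPClosed}~\pref{item:closed}): the weak limit of the axes of $b_1^n b_2$ is then carried by a single conjugate, whose boundary must meet both of the distinct boundaries above, and that is the contradiction. But at this point you have reproved by hand, in $\bdy F_n$, precisely the tree-theoretic fact cited in your last paragraph, which is why the paper simply invokes Lemma~\refGM{LemmaVSElliptics}.
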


\begin{proof} By Corollary~\ref{CorNAWellDefined} the conjugacy class of every nontrivial element of $B$ is carried by the subgroup system $\A_\na(\Lambdapp)$ which, by Proposition~\ref{PropVerySmallTree}~\pref{ItemA_naNP}, is a vertex group system. Applying Lemma~\refGM{LemmaVSElliptics}, the conclusion follows.
\end{proof}

Using Corollary~\ref{CorNASubgroups} we can now prove some useful invariance properties of $\A_\na(\Lambdapp)$, for instance that $\A_\na(\Lambdapp)$ is an invariant of $\phi$ and~$\Lambdapp$ alone, independent of the choice of \ct\ representing~$\phi$.


\begin{corollary} \label{CorPMna}
For any rotationless $\phi \in \Out(F_n)$ and any lamination $\Lambdapp \in \L(\phi)$ we have:
\begin{enumerate}
\item \label{ItemAnaCharacterization}
The nonattracting subgroup system $\A_\na(\Lambdapp)$ is the unique vertex group system such that the conjugacy classes it carries are precisely those which are not weakly attracted to $\Lambda^+_\phi$ under iteration of $\phi$. 
\item \label{ItemAnaDependence}
$\A_\na(\Lambdapp)$ depends only on $\phi$ and $\Lambdapp$, not on the choice of a \ct\ representing $\phi$.
\item \label{ItemAnaNaturality}
The dependence in \pref{ItemAnaDependence} is natural in the sense that if $\theta \in \Out(F_n)$ then $\theta(\A_\na(\Lambda^+_\phi)) = \A_\na(\Lambda^+_{\theta\phi\theta^\inv})$ where $\Lambda^+_{\theta\phi\theta^\inv}$ is the image of $\Lambda^+_\phi$ under the bijection $\L(\phi) \mapsto \L(\theta\phi\theta^\inv)$ induced by $\theta$.
\end{enumerate}
\end{corollary}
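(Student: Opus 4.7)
The heart of the corollary is the uniqueness assertion \pref{ItemAnaCharacterization}; items \pref{ItemAnaDependence} and \pref{ItemAnaNaturality} will follow formally once uniqueness is in hand. The strategy for \pref{ItemAnaCharacterization} is to combine Proposition~\ref{PropVerySmallTree} with Corollaries~\ref{CorNAWellDefined} and~\ref{CorNASubgroups} in a symmetric containment argument showing that any vertex group system carrying exactly the conjugacy classes that are not weakly attracted to $\Lambdapp$ must coincide with $\A_\na(\Lambdapp)$.

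To carry this out, let $\A'$ be any such vertex group system. Start with a component $[A]$ of $\A_\na(\Lambdapp)$: by Corollary~\ref{CorNAWellDefined} every conjugacy class in $A$ is not weakly attracted to $\Lambdapp$, hence is carried by $\A'$, and since $\A'$ is a vertex group system Lemma~\refGM{LemmaVSElliptics} will produce a representative $B$ of some component of $\A'$ with $A \subset B$. Running the same reasoning starting from $[B]$ then produces $[A''] \in \A_\na(\Lambdapp)$ with $B \subset A''$, by Corollary~\ref{CorNASubgroups}. The resulting nested chain $A \subset B \subset A''$ between two components of $\A_\na(\Lambdapp)$ collapses via the malnormality conclusion of Proposition~\ref{PropVerySmallTree}~\pref{ItemA_naMalnormal} to $A = B = A''$; by symmetry the correspondence $[A] \leftrightarrow [B]$ is a bijection between components, so $\A' = \A_\na(\Lambdapp)$.

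Items \pref{ItemAnaDependence} and \pref{ItemAnaNaturality} then follow by invoking \pref{ItemAnaCharacterization}. For \pref{ItemAnaDependence}, two different \cts\ representing $\phi$ give two candidate nonattracting subgroup systems via Definitions~\ref{defn:Z}; both are vertex group systems by Proposition~\ref{PropVerySmallTree}~\pref{ItemA_naNP} and both carry exactly the non-weakly-attracted conjugacy classes by Corollary~\ref{CorNAWellDefined}, so \pref{ItemAnaCharacterization} forces them equal. For \pref{ItemAnaNaturality}, the image $\theta(\A_\na(\Lambda^+_\phi))$ is a vertex group system, and the identity $(\theta\phi\theta^\inv)^n(c) = \theta(\phi^n(\theta^\inv(c)))$ combined with $\theta$-equivariance of the weak topology on $\B$ shows that a conjugacy class $c$ is carried by $\theta(\A_\na(\Lambda^+_\phi))$ if and only if $c$ is not weakly attracted to $\theta(\Lambda^+_\phi) = \Lambda^+_{\theta\phi\theta^\inv}$ under iteration of $\theta\phi\theta^\inv$; applying \pref{ItemAnaCharacterization} to the conjugate automorphism $\theta\phi\theta^\inv$ then finishes the proof.

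The main obstacle is the uniqueness step: upgrading coincidence of carried conjugacy classes to equality of the two vertex group systems. This requires both the forward direction (every element carried is contained in a single component, supplied by Corollary~\ref{CorNASubgroups} on one side and by the vertex-group-system property of $\A'$ via Lemma~\refGM{LemmaVSElliptics} on the other) and a mechanism to collapse nested inclusions between two components into equality, for which the malnormality conclusion of Proposition~\ref{PropVerySmallTree}~\pref{ItemA_naMalnormal} is essential; without it the argument only produces mutual domination of the two systems, not outright equality.
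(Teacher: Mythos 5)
Your reduction of items \pref{ItemAnaDependence} and \pref{ItemAnaNaturality} to item \pref{ItemAnaCharacterization} is fine, but your proof of the uniqueness assertion \pref{ItemAnaCharacterization} has a genuine gap at the final ``by symmetry'' step. Your chain argument collapses $A \subset B \subset A''$ using Proposition~\ref{PropVerySmallTree}~\pref{ItemA_naMalnormal}, and this is legitimate precisely because $A$ and $A''$ both represent components of $\A_\na(\Lambdapp)$; it shows that every component of $\A_\na(\Lambdapp)$ is also a component of $\A'$. But the argument is not symmetric: starting from a component $B'$ of $\A'$, Corollary~\ref{CorNASubgroups} gives $B' \subset A$ with $[A] \in \A_\na(\Lambdapp)$, and Lemma~\refGM{LemmaVSElliptics} applied to $\A'$ gives $A \subset B''$ with $[B''] \in \A'$; to collapse the chain $B' \subset A \subset B''$ you would need malnormality of $\A'$, which is an arbitrary vertex group system. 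Malnormality is a special property proved for $\A_\na(\Lambdapp)$ in Proposition~\ref{PropVerySmallTree}, not something available for vertex group systems in general, so as written your argument yields only the inclusion $\A_\na(\Lambdapp) \subseteq \A'$ and does not exclude the possibility that $\A'$ has additional components properly contained (up to conjugacy) in components of $\A_\na(\Lambdapp)$, for instance a cyclic component nested inside one of the $[A]$'s.

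What is missing is exactly the statement the paper invokes: Lemma~\refGM{LemmaVSElliptics} also asserts that a vertex group system is determined by the set of conjugacy classes of elements of $F_n$ that it carries, and the paper proves \pref{ItemAnaCharacterization} and \pref{ItemAnaDependence} in one stroke by combining this with Proposition~\ref{PropVerySmallTree}~\pref{ItemA_naNP} and Corollary~\ref{CorNAWellDefined}. Your proposal uses only the weaker ``subgroup containment'' form of that lemma (the form appearing in Corollary~\ref{CorNASubgroups}), and malnormality of $\A_\na(\Lambdapp)$ alone cannot substitute for the determination statement on the $\A'$ side; you must either cite the determination form or supply an argument that a vertex group system cannot have one component conjugate into a properly larger one, and neither appears in your write-up. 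Modulo this, your treatment of \pref{ItemAnaDependence} is the same as the paper's, and for \pref{ItemAnaNaturality} your equivariance argument is an acceptable alternative to the paper's device of changing the marking of a \ct\ by the conjugator $\theta$.
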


\begin{proof}  
By Proposition~\ref{PropVerySmallTree}~\pref{ItemA_naNP}, $\A_\na(\Lambdapp)$ is a vertex group system. By Lemma~\refGM{LemmaVSElliptics}, $\A_\na(\Lambdapp)$ is determined by the set of conjugacy classes of elements of $F_n$ that are carried by $\A_\na(\Lambdapp)$, and by Corollary~\ref{CorNAWellDefined} these conjugacy classes are determined by $\phi$ and $\Lambdapp$ alone, independent of choice of a \ct\ representing $\phi$, namely they are the conjugacy classes weakly attracted to $\Lambdapp$ under iteration of~$\phi$. This proves~\pref{ItemAnaCharacterization} and~\pref{ItemAnaDependence}. Item~\pref{ItemAnaNaturality} follows by choosing any \ct\ $f \from G \to G$ representing $\phi$ and changing the marking on $G$ by the conjugator $\theta$ to get a \ct\ representing $\theta\phi\theta^\inv$.
\end{proof}

The following shows that not only is $\A_\na(\Lambdapp)$ is invariant under change of \ct, but it is invariant under inversion of $\phi$ and replacement of $\Lambdapp$ with its dual lamination.

\begin{corollary}\label{CorNAIndOfPM} Given $\phi, \phi^\inv \in \Out(F_n)$ both rotationless, and given a dual lamination pair $\Lambda^+ \in \L(\phi)$,  $\Lambda^-\in \L(\phi^\inv)$, we have $\A_\na(\Lambda^+;\phi) = \A_\na(\Lambda^-;\phi^\inv)$.
\end{corollary}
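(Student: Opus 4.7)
By Corollary~\ref{CorPMna}~\pref{ItemAnaCharacterization}, the vertex group system $\A_\na(\Lambda^+;\phi)$ is uniquely determined among vertex group systems by the fact that the conjugacy classes it carries are precisely those not weakly attracted to $\Lambda^+$ under iteration of $\phi$. Applying the same corollary with $\phi$ replaced by $\phi^\inv$ (which is rotationless by hypothesis) and $\Lambda^+$ replaced by $\Lambda^- \in \L(\phi^\inv)$, the system $\A_\na(\Lambda^-;\phi^\inv)$ is analogously characterized by non-attraction to $\Lambda^-$ under $\phi^\inv$. The desired equality therefore reduces to the single dynamical claim: for every conjugacy class $c$ in $F_n$,
\[
c \text{ is weakly attracted to } \Lambda^+ \text{ under } \phi \iff c \text{ is weakly attracted to } \Lambda^- \text{ under } \phi^\inv.
\]
Since this equivalence is itself symmetric under $(\phi,\Lambda^+) \leftrightarrow (\phi^\inv,\Lambda^-)$---noting that $(\Lambda^-,\Lambda^+)$ is a dual lamination pair for $\phi^\inv$---it suffices to prove only one direction.

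I would fix a \ct\ $f \from G \to G$ representing $\phi$ with \eg-stratum $H_r$ corresponding to $\Lambda^+$, and a \ct\ $f' \from G' \to G'$ representing $\phi^\inv$ with \eg-stratum $H'_{r'}$ corresponding to $\Lambda^-$, adopting for each the notation of Definitions~\ref{defn:Z} to produce subgraphs $Z,Z'$ and paths $\hat\rho_r, \hat\rho'_{r'}$. By Lemma~\ref{LemmaZPClosed}~\pref{Item:circuits} together with Corollary~\ref{CorNAWellDefined}, the two non-attraction conditions become: the circuit $c$ lies in $\<Z,\hat\rho_r\>$ in $G$, and in $\<Z',\hat\rho'_{r'}\>$ in $G'$. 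For the forward direction, assume $c$ lies in $\<Z,\hat\rho_r\>$ so that, by Lemma~\ref{defining Z}~\pref{ItemZPAnyPaths}, $c$ is not weakly attracted to $\Lambda^+$ under $\phi$. I would then invoke the duality of the lamination pair $\Lambda^\pm$ from \PartTwo\ to transfer the non-attraction property to the $\phi^\inv$-dynamics: the decomposition of $c$ as a concatenation of edges of $Z$ and copies of $\hat\rho_r^{\pm 1}$ should force, via the shared asymptotic data tying $\Lambda^+$ to $\Lambda^-$, an analogous decomposition of the circuit $c$ in $G'$ into edges of $Z'$ and copies of $\hat\rho'_{r'}{}^{\pm 1}$.

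The main obstacle is that the \ct s $f$ and $f'$ are a priori unrelated, so this bridge between $\<Z,\hat\rho_r\>$ and $\<Z',\hat\rho'_{r'}\>$ must pass through a \ct-independent object. My approach would be to split into cases according to geometricity. In the non-geometric case, Proposition~\ref{PropVerySmallTree}~\pref{ItemA_naNoNP} (applied to both $\phi$ and $\phi^\inv$, using that $\Lambda^+$ is geometric iff $\Lambda^-$ is) shows that both $\A_\na(\Lambda^+;\phi)$ and $\A_\na(\Lambda^-;\phi^\inv)$ are free factor systems, each determined by its carried conjugacy classes; an invariant description of those classes via the dual-pair growth data from \PartTwo\ then collapses the two sides. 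In the geometric case, Remark~\ref{RemarkGeometricK} realizes both $\A_\na$'s using the shared geometric model of the dual pair, so the added Nielsen cyclic class $[\<\rho_r\>]$ for $\phi$ is identified with its counterpart for $\phi^\inv$, and the remaining free factor part is handled as in the non-geometric case. Verifying this identification of the two dual pictures is the crux of the proof.
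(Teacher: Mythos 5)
Your reduction to the single dynamical equivalence for conjugacy classes, and the observation that symmetry under $(\phi,\Lambda^+)\leftrightarrow(\phi^\inv,\Lambda^-)$ lets you prove only one direction, matches the paper exactly and is correct. But from there the proposal has a genuine gap, which you yourself flag: the entire weight of the argument rests on a ``bridge'' between $\<Z,\hat\rho_r\>$ in $G$ and $\<Z',\hat\rho'_{r'}\>$ in $G'$, and you never say how that bridge is built. Invoking ``duality from Part II'' or ``shared asymptotic data'' is not an argument; there is no result in the cited material that directly converts a $\<Z,\hat\rho_r\>$-decomposition of a circuit in one \ct\ into a $\<Z',\hat\rho'_{r'}\>$-decomposition in an independently chosen \ct\ for the inverse. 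Your geometric-case sketch has the same problem: the two geometric models for $H_r$ in $G$ and for $H'_{r'}$ in $G'$ are a priori unrelated objects, and matching them up would itself require the very invariance you are trying to prove. So the plan does not close.

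The paper's actual proof avoids two \ct s entirely. Fix only $f\from G\to G$ for $\phi$, with $[G_r]=\F_\supp(\Lambda^+)$. Suppose $[a]$ is not weakly attracted to $\Lambda^+$ under $\phi$; then all the classes $\phi^{-k}([a])$ share this property, so by Corollary~\ref{CorNAWellDefined} they all lie in $\<Z,\hat\rho_r\>$. Now suppose, for contradiction, that $[a]$ \emph{is} weakly attracted to $\Lambda^-$ under $\phi^\inv$. Then $\phi^{-k}([a])$ weakly accumulates on a generic leaf $\gamma$ of $\Lambda^-$, and since the set of lines carried by $\<Z,\hat\rho_r\>$ is weakly closed (Lemma~\ref{LemmaZPClosed}~\pref{item:closed}), $\gamma$ itself lies in $\<Z,\hat\rho_r\>$. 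But $\F_\supp(\gamma)=[G_r]$ forces $\gamma$ to have height $r$, and a line of height~$r$ in $\<Z,\hat\rho_r\>$ must either have no $H_r$ edges (impossible), or be built from iterates of a nonclosed $\rho_r$ with an endpoint off $G_{r-1}$ (impossible for a bi-infinite line), or be the bi-infinite iterate of a closed $\rho_r$ (impossible because generic leaves are not periodic, \cite[Lemma~3.1.16]{BFH:TitsOne}). This contradiction finishes the proof. The key lemma you are missing is the weak closedness of $\<Z,\hat\rho_r\>$, which lets one trap the $\Lambda^-$-generic leaf inside the $\phi$-side coordinate system without ever introducing a second \ct.
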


\subparagraph{Notational remark.} Based on Corollary~\ref{CorNAIndOfPM}, we introduce the notation $\A_\na \Lambda_\phi^\pm$ for the vertex group system $\A_\na(\Lambda^+;\phi) = \A_\na(\Lambda^-;\phi^\inv)$.

\begin{proof} For each nontrivial conjugacy class $[a]$ in $F_n$, we must prove that $[a]$ is weakly attracted to $\Lambda^+$ under iteration by $\phi$ if and only if $[a]$ is weakly attracted to $\Lambda^-$ under iteration by~$\phi^\inv$. Replacing $\phi$ with $\phi^\inv$ it suffices to prove the ``if'' direction. Applying Theorem~\refGM{TheoremCTExistence}, choose a \ct\ $\fG$ representing $\phi$ having a core filtration element $G_r$ such that $[G_r] = \F_\supp(\Lambda^+)$, and so $H_r \subset G$ is the \eg\ stratum corresponding to $\Lambda^+$. We adopt the notation of Definitions~\ref{defn:Z}. 

Suppose that $[a]$ is not weakly attracted to $\Lambda^+$ under iteration by $\phi$. Then the same is true for all $\phi^{-k}([a])$ and so $\phi^{-k}([a]) \in \<Z,\hat\rho_r\>$ for all $k \ge 0$ by Corollary~\ref{CorNAWellDefined}. 

Arguing by contradiction, suppose in addition that $[a]$ is weakly attracted to $\Lambda^-$ under iteration by $\phi^\inv$. Applying Corollary~\ref{LemmaZPClosed}~\pref{item:closed} it follows that a generic line $\gamma$ of $\Lambda^-$ is contained in $\<Z,\hat\rho_r\>$. However, since $\F_\supp(\gamma) = \F_\supp(\Lambda^-) = [G_r]$, it follows that $\gamma$ has height $r$. If $\hat\rho_r$ is trivial then $\gamma$ is a concatenation of edges of $Z$ none of which has height~$r$, a contradiction. If $\hat\rho_r = \rho_r$ is nontrivial then all occurences of edges of $H_r$ in $\gamma$ are contained in a pairwise disjoint collection of subpaths each of which is an iterate of $\rho_r$ or its inverse. By Fact~\refGM{FactEGNielsenCrossings}, at least one endpoint of $\rho_r$ is disjoint from $G_{r-1}$. If $\rho_r$ is not closed then we obtain an immediate contradiction. If $\rho_r$ is closed then $\gamma$ is a bi-infinite iterate of $\rho_r$, but this contradicts \BookOne\ Lemma~3.1.16 which says that no generic leaf of $\Lambda^+_\psi$ is periodic. 
\end{proof}

\subsection{Weak convergence and malnormal subgroup systems.} 
\marginparLee{Michael: This newly numbered section contains the new version of Lemma \ref{ItemNotZPLimit}, which used to be solely about $\A_\na$.}
We conclude Section~\ref{SectionWeakAttraction} with a result which generalizes Fact~\refGM{FactWeakLimitLines} from free factor systems to malnormal subgroup systems. This lemma could have been proved back in Part~1, but the issue first arises here because of applications to $\A_\na(\Lambdapp)$ (see the proof of Corollary~\ref{CorOneWayOrTheOther}~\pref{ItemUniformOWOTO}), malnormality of which is proved in Proposition~\ref{PropVerySmallTree}. 

In this final subsection of Section~\ref{SectionWeakAttraction}, $G$ denotes an arbitrary marked graph. Given a subgroup system $\A=\{[A_1],\ldots,[A_J]\}$, the \emph{Stallings graph} of $\A$ with respect to $G$ is an immersion $f \from \Gamma \to G$ of a finite graph $\Gamma$ whose components are core graphs $\Gamma=\Gamma_1 \union\cdots\union\Gamma_J$ such that the subgroup $A_j<F_n$ is conjugate to the image of $f_* \from \pi_1(\Gamma_j) \to \pi_1(G)\approx F_n$. The Stallings graph is unique up to homeomorphism of the domain. 

Recall from Section~\refGM{SectionLineDefs} that a ``ray of $F_n$'', i.e.\ an element of the set $\bdy F_n / F_n$, is realized in $G$ as a ray $\gamma = E_0 E_1 E_2 \cdots \in \wh\B(G)$ well defined up to asymptotic equivalence. Recall also from Section~\refGM{SectionLineDefs} the weak accumulation set of the ray~$\gamma$, a subset of $\B(G)$. Given a subgroup system $\A$, we say that $\gamma$ is \emph{carried by~$\A$} if there exists a lift $\ti \gamma = \wt E_0 \wt E_1 \wt E_2 \cdots \subset \wt G$ and a subgroup $A \subgroup F_n$ with $[A] \in \A$ such that the end of $\ti \gamma$ is in $\bdy A$. If $\A$ is the free factor system corresponding to a subgraph $H \subset G$ then $\gamma$ is carried by $\A$ if and only if some subray of $\gamma$ is contained in $H$. In the earlier context of Section~\ref{SectionWeakAttraction}, $\gamma$ is carried by $\A_\na(\Lambdapp)$ if and only if some subray of $\gamma$ is an element of $\<Z,\hat\rho_r\>$.

\begin{lemma}
\label{ItemNotZPLimit}
For any marked graph $G$ and any malnormal subgroup system $\A$ we have:
\begin{enumerate}
\item\label{ItemNotMalLimitLine}
Every sequence of lines $\gamma_i \in \B(G)$ not carried by~$\A$ has a subsequence that weakly converges to a line not carried by $\A$.
\item\label{ItemNotMalLimitRay}
The weak accumulation set of every ray not carried by $\A$ contains a line not carried by~$\A$.
\item\label{ItemNotMalFinitePathSet}
For each sufficiently large constant $L$, letting $\Sigma \subset \wh\B(G)$ be the set of all finite paths of length $\le L$ that do not lift to the Stallings graph of $\A$, the following hold: 
\begin{enumerate}
\item\label{ItemNotMalLimitPath} For each sequence $\gamma_i \in \wh\B(G)$ and each decomposition $\gamma_i = \alpha_i * \beta_i * \omega_i$, if $\beta_i \in \Sigma$ for each $i$, and if $\Length(\alpha_i) \to +\infinity$ and $\Length(\omega_i) \to +\infinity$ as $i \to +\infinity$, then some weak limit of some subsequence of $\gamma_i$ is a line not carried by~$\A$.
\item\label{ItemNotMalCarriedLine}
A line $\gamma \in \B(G)$ is not carried by $\A$ if and only if some subpath of $\gamma$ is in $\Sigma$.
\item\label{ItemNotMalCarriedRay}
A ray $\gamma \in \wh\B(G)$ is not carried by $\A$ if and only if $\gamma$ has infinitely many distinct subpaths in $\Sigma$.
\end{enumerate}
\end{enumerate}
\end{lemma}

%

\subparagraph{Remark.} There is also a converse to~\pref{ItemNotMalFinitePathSet}, namely that if $\A$ is not malnormal then no such finite set $\Sigma$ exists.

\begin{proof} First we show that \pref{ItemNotMalFinitePathSet}$\implies$\pref{ItemNotMalLimitLine} and~\pref{ItemNotMalLimitRay}. 

For any sequence of lines $\gamma_i \in \B(G)$ not carried by $\A$, apply~\pref{ItemNotMalCarriedLine} to choose a subpath $\beta_i$ of $\gamma_i$ not in $\Sigma$. We obtain a decomposition $\gamma_i = \alpha_i \beta_i \omega_i$ with $\Length(\alpha_i)=\Length(\omega_i)=\infinity$, and so \pref{ItemNotMalLimitLine} follows from~\pref{ItemNotMalLimitPath}. For any ray $\gamma \in \wh\B(G)$ not carried by $\A$, apply~\pref{ItemNotMalCarriedRay} to choose infinitely many distinct subpaths $\beta_i$ of $\gamma$ not in $\Sigma$. We obtain for each $i$ a decomposition $\gamma = \alpha_i \beta_i \omega_i$ with $\Length(\alpha_i) \to \infinity$ as $i \to \infinity$, and with $\Length(\omega_i) = \infinity$, and so \pref{ItemNotMalLimitRay} follows from~\pref{ItemNotMalLimitPath} with $\gamma_i=\gamma$. 

We turn to the proof of~\pref{ItemNotMalFinitePathSet}. Let $f \from \Gamma=\Gamma_1,\ldots,\Gamma_J \to G$ be the Stallings graph of $\A=\{[A_1],\ldots,[A_J]\}$.
Let $\T$ be the set of all minimal subtrees $T \subset \wt G$ with respect to the actions of all subgroups $A \subgroup F_n$ representing elements of $\A$, so the trees $T$ are precisely the images of all the lifts of all the maps $f \from \Gamma_j \to G$ to universal covers, $j=1,\ldots,J$. Since $\A$ is malnormal, there exists a constant $D$ such that for each $T \ne T' \in \T$ the diameter of $T \intersect T'$ is $\le D$ (see Section~\refGM{SectionSSAndFFS}). Consider any $L \ge 2D+2$ and let~$\wt\Sigma$ be the set of all paths $\ti\beta$ in $\wt G$ such that $\Length(\ti\beta) \le L$ and $\ti\beta \not\subset T$ for each $T \in \T$. The set $\Sigma$ of projections to $G$ of all paths in $\wt\Sigma$ is precisely the set of all paths in $G$ of length $\le L$ that do not lift to the Stallings graph of $\A$. 

The $\Leftarrow$ direction of~\pref{ItemNotMalCarriedLine} follows by observing that if a line is carried by~$\A$ then it lifts to some tree $T \in \T$ and so every subpath lifts to~$T$, hence no subpath is in $\Sigma$. For the~$\Rightarrow$ direction, suppose that no subpath of $\gamma$ is in~$\Sigma$, and so every subpath of length $\le L$ lifts to a path in one of the trees in $\T$. Decompose $\gamma$ as a bi-infinite concatenation of subpaths of length $D+1$
$$\gamma = \cdots \beta_{-1} \beta_0 \beta_1 \beta_2 \cdots
$$
Each of the subpaths $\beta_{i-1} \beta_i$ has length $2D+2 \le L$ and so lifts to one of the trees in $\T$. Choose a tree $T \in \T$ and a lift $\ti\beta_0 \ti\beta_1 \subset T$ of the subpath $\beta_0 \beta_1$. Proceeding by induction, suppose that $0 < i$ and that we have extended $\ti\beta_0 \ti\beta_1$ to a lift 
$$\ti\beta[-i+1,i] = \ti\beta_{-i+1} \cdots \ti\beta_0 \ti\beta_1 \cdots \ti\beta_i \subset T
$$
of the subpath $\beta[-i+1,j] = \beta_{-i+1} \cdots \beta_0 \beta_1 \cdots \beta_i$. Choose a tree $T' \in \T$ and a lift $\ti\beta'_i \ti\beta'_{i+1}$ of the subpath $\beta_i\beta_{i+1}$. Since $\ti\beta_i,\ti\beta'_i$ are two lifts of the same path, there exists $g \in F_n$ with corresponding covering transformation $\tau_g \from \wt G \to \wt G$ such that $\tau_g(\ti\beta'_i) = \ti\beta_i$. It follows that $\tau_g(T') \in \T$ and that the diameter of $T \intersect \tau_g(T')$ is greater than or equal to $\Length(\ti\beta_i) = D+1$ and therefore $\tau_g(T')=T$. The path $\ti\beta_{-i+1} \cdots \ti\beta_0 \ti\beta_1 \cdots \ti\beta_i \tau_g(\ti\beta'_{i+1})$ is therefore a lift of $\beta[-i+1,i+1]$ extending $\ti\beta_0\ti\beta_1$. In a similar fashion, by choosing a tree in $\T$ containing a lift of $\beta_{-i} \beta_{-i+1}$, we obtain a lift of $\beta[-i,i+1]$ extending $\ti\beta_0 \ti\beta_1$, completing the induction. Taking the union as $i \to \infinity$ we obtain a lift of $\gamma$ to $T$, and so $\gamma$ is carried by~$\A$.

The proof of~\pref{ItemNotMalCarriedRay} is almost the same as~\pref{ItemNotMalCarriedLine}, the primary difference being that if a ray $\gamma \in \wh\B(G)$ has only finitely many distinct subpaths in $\Sigma$ then after truncating some initial segment we may assume that $\gamma$ has no subpaths in $\Sigma$, and so after that truncation we may write $\gamma$ as a singly infinite concatenation $\gamma = \beta_0 \beta_1 \beta_2 \cdots$ of paths of length $D+1$; then proof then proceeds inductively as above.

To prove~\pref{ItemNotMalLimitPath}, since $\Sigma$ is finite we may pass to a subsequence of $\gamma_i$ so that $\beta_i=\beta$ is constant. By a diagonalization argument as in the proof of Fact~\refGM{FactWeakLimitLines} one obtains a subsequence of $\gamma_i$ that weakly converges to a line containing $\beta$ as a subpath, and so by~\pref{ItemNotMalCarriedLine} that line is not carried by~$\A$.
\end{proof}

\section{Nonattracted lines}
\label{SectionNALines}
In the previous section, given a rotationless $\phi \in \Out(F_n)$ and $\Lambdapp \in \L(\phi)$, we described the set of conjugacy classes that are not weakly attracted to $\Lambdapp$ under iteration by $\phi$ --- they are precisely the conjugacy classes carried by the nonattracting subgroup system $\A_\na(\Lambda^\pm_\phi)$.  

In this section we state and prove Theorem~\ref{ThmRevisedWAT}, the full fledged version of Theorem~G from the Introduction, which characterizes those lines that are not weakly attracted to $\Lambdapp$ under iteration by $\phi$. Our characterization starts with Lemma~\ref{LemmaThreeNASets} that lays out three particular types of such lines: lines carried by $\A_\na(\Lambda^\pm_\phi)$; singular lines of $\phi^\inv$; and generic leaves of laminations in $\L(\phi^\inv)$. Theorem~\ref{ThmRevisedWAT} will say that, in addition to these three subsets, by concatenating elements of these subsets in a very particular manner one obtains the entire set of lines not weakly attracted to $\Lambdapp$. The proof of this theorem will occupy the remaining subsections of Section~\ref{SectionWeakAttraction}.

The proof of Theorem~\ref{ThmRevisedWAT} requires a re-examination of the weak attraction theory of mapping classes of surfaces, based on Nielsen-Thurston theory, and carried out in Section~\ref{SectionNFHGeometric}. One ``folk theorem'' in this context is that for any finite type compact surface $S$ with exactly one boundary component, and for any mapping class $\phi  \in \MCG(S) \subgroup \Out(\pi_1 S)$, if $\phi$ is a pseudo-Anosov element of $\MCG(S)$ then $\phi$ is a fully irreducible element of $\Out(\pi_1 S)$. In Proposition~\ref{PropWeakGeomRelFullIrr} we will prove this folk theorem in a very general context that is expressed in terms of geometric models. In \PartFour, the conclusions of Proposition~\ref{PropWeakGeomRelFullIrr} will be incorporated into Theorem~J, which is the full fledged version of Theorem~I from the introduction.

\subsection{Theorem G --- Characterizing nonattracted lines}
\label{SectionTheoremGStatement}
From here up through Section~\ref{SectionNonattrFullHeight} we adopt the following:

\smallskip

\textbf{Notational conventions:} Let $\phi, \psi=\phi^{-1} \in \Out(F_n)$ be rotationless, let $\Lambda^\pm_\phi \in \L^\pm(\phi)$ be a lamination pair, and denote $\Lambda^+_\psi= \Lambda^-_\phi  \in \L(\psi)$ and $\Lambda^-_\psi = \Lambda^+_\phi \in \L(\phi)$. Applying \recognition\ Theorem~4.28 (or see Theorem~\refGM{TheoremCTExistence}), choose $f \from G \to G$ and $f' \from G' \to G'$ to be \cts\ representing $\phi$ and $\psi$, respectively, the first with \eg\ stratum $H_r \subset G$ associated to $\Lambda^+_\phi$, and the second with \eg\ stratum $H'_u \subset G'$ associated to $\Lambda^+_\psi$, so that 
\begin{align*}
[G_r] = \F_\supp(\Lambda^+_\phi) &= \F_\supp(\Lambda^-_\psi) = [G'_u] \\
[G_{r-1}] &= [G'_{u-1}]
\end{align*}
To check that this is possible, after choosing $f \from G \to G$ to satisfy the one condition $[G_r]=\F_\supp(\Lambda^\pm_\phi)$ we may then choose $f'$ to satisfy the two conditions $[G'_u]=[G_r]$ and $[G'_{t}] = [G_{r-1}]$ for some $t < u$, but then by (Filtration) in Definition~\refGM{DefCT} it follows that $[G'_{t}] = [G'_{u-1}]$. For other laminations in the set $\L(\psi)$, or strata or filtration elements of $G'$ that occur in the course of our presentation, we use notation like $\Lambda^-_t$, or $H'_t$ or $G'_t$ with the subscript~$t$, as in the previous paragraph.

\bigskip

The reader may refer to Section~\refGM{SectionPrincipalRotationless} for a refresher on basic concepts regarding the set $P(\phi)$ of principal automorphisms representing $\phi \in \Out(F_n)$, and on the set $\Fix(\wh\Phi) \subset \bdy F_n$ of points at infinity fixed by the continuous extension $\wh\Phi \from \bdy F_n \to \bdy F_n$ of an automorphism $\Phi \in \Aut(F_n)$.

We also recall/introduce some notations and definitions related to a rotationless outer automorphism $\psi \in \Out(F_n)$.
\begin{itemize}
\item $\B_\na(\Lambdapp) = \B_\na(\Lambdapp;\phi)$ denotes set of all lines in $\B$ that are not weakly attracted to $\Lambdapp$ under iteration by~$\phi$.
\item $\B_\sing(\psi)$ denotes the set of singular lines of $\psi$: by definition, $\ell \in \B$ is a singular line for $\psi$ if there exists $\Psi \in P(\psi)$ ($=$ the set of principal automorphisms representing $\psi$) such that $\bdy\ell \subset \Fix_N(\Psi)$.
\item $\B_\gen(\psi)$ denotes the set of all generic leaves of all elements of $\L(\psi)$. 
\end{itemize}


\begin{lemma} \label{LemmaThreeNASets}  Given rotationless $\phi,\psi=\phi^{-1} \in \Out(F_n)$ and $\Lambdapp \in \L(\phi)$, if $\gamma \in \B$ satisfies any of the following three conditions then $\gamma \in \B_\na(\Lambdapp)$:
\begin{enumerate}
\item\label{ItemNANA}
 $\gamma$ is carried by  $\A_{\na}(\Lambdapmp)$.
\item\label{ItemSingNA}
$\gamma \in \B_\sing(\psi)$ 
\item\label{ItemGenNA}
$\gamma \in \B_\gen(\psi)$.
\end{enumerate}

\end{lemma}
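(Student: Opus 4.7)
I would treat the three sufficient conditions independently. Case (1) is essentially an immediate consequence of the material in Section~\ref{SectionAsubNA}: adopt the CT $f \colon G \to G$ representing $\phi$ with \eg\ stratum $H_r$ corresponding to $\Lambda^+_\phi$ and the notation of Definitions~\ref{defn:Z}; the hypothesis that $\gamma$ is carried by $\A_\na(\Lambdapmp)$ is equivalent via Lemma~\ref{LemmaZPClosed}~\pref{item:ZP=NA} to $\gamma \in \<Z,\hat\rho_r\>$, and Lemma~\ref{defining Z}~\pref{ItemZPAnyPaths} then immediately gives that $\gamma$ is not weakly attracted to $\Lambda^+_\phi$.

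For Case (2), the hypothesis $\bdy\gamma \subset \Fix_N(\Psi)$ with $\Psi \in P(\psi)$ means both endpoints of $\gamma$ are fixed by $\Psi$, hence by $\Psi^\inv \in \Aut(F_n)$ representing $\phi$. Thus $\phi$ fixes $\gamma$ as a line and $\phi^n(\gamma) = \gamma$ for all $n \ge 0$. Weak attraction of $\gamma$ to a generic leaf $\lambda$ of $\Lambda^+_\phi$ therefore reduces, by inspection of the weak topology (whose neighborhoods of $\lambda$ are specified by the finite subpaths of $\lambda$), to the assertion that every finite subpath of $\lambda$ is a subpath of $\gamma$, i.e.\ $\gamma$ lies in the weak closure of $\{\lambda\}$. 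Since $\lambda$ is generic, that weak closure is $\Lambda^+_\phi$, so the claim becomes: no singular line of $\psi$ is a leaf of $\Lambda^+_\phi$. I would justify this by comparing endpoint structure, combining BookOne Lemma~3.1.16 (no generic leaf of $\Lambda^+_\phi$ is $\phi$-periodic) with the classification of principal lifts and eigenrays recalled in Section~\refGM{SectionPrincipalRotationless}: the endpoints of leaves of $\Lambda^+_\phi$ come from attracting-type fixed points of $\phi$-principal lifts, whereas the endpoints of singular lines of $\psi$ lie in $\Fix_N$ of $\psi$-principal lifts and are therefore non-attracting for every $\phi$-representative, giving the required incompatibility.

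For Case (3), rotationlessness of $\psi$ makes every $\Lambda' \in \L(\psi)$ both $\psi$- and $\phi$-invariant, so when $\gamma$ is a generic leaf of $\Lambda'$ we have $\phi^n(\gamma) \in \Lambda'$ for all $n \ge 0$. Since $\Lambda'$ is weakly closed, the weak accumulation set of the $\phi$-orbit of $\gamma$ is contained in $\Lambda'$; if $\gamma$ were attracted to a generic leaf $\lambda$ of $\Lambda^+_\phi$, then $\lambda \in \Lambda'$, and taking the weak closure of $\{\lambda\}$ would force $\Lambda^+_\phi \subseteq \Lambda'$. The hard step is excluding this containment. When $\Lambda'$ is not the paired lamination I would invoke the free factor support comparison $\F_\supp(\Lambda^+_\phi) = [G_r]$ versus $\F_\supp(\Lambda')$ to obtain the contradiction; when $\Lambda' = \Lambda^+_\psi = \Lambda^-_\phi$ I would appeal to the non-containment of dual laminations established in BookOne. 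The main obstacles throughout the proof are precisely the containment exclusions at the end of Cases~(2) and~(3); Case~(1) is essentially done by quoting Section~\ref{SectionAsubNA}.
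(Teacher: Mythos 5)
Your Case~(1) is correct and in fact more direct than the paper's argument: the paper instead observes that circuits carried by $\A_\na(\Lambdapmp)$ are not attracted (Corollary~\ref{CorNAWellDefined}), that axes are dense in the lines carried by a subgroup system, and that $\B_\na(\Lambdapp)$ is weakly closed; your route via Lemma~\ref{LemmaZPClosed}~\pref{item:ZP=NA} and Lemma~\ref{defining Z}~\pref{ItemZPAnyPaths} is a legitimate shortcut.

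Case~(2), however, has a genuine error in the direction of weak closure. You correctly note that, since $\phi^n(\gamma)=\gamma$, weak attraction is equivalent to the statement that $\gamma$ contains every finite subpath of a generic leaf $\lambda$. But that condition says $\lambda \in \closure\{\gamma\}$, \emph{not} $\gamma \in \closure\{\lambda\}$. It does not follow that $\gamma$ is a leaf of $\Lambda^+_\phi$: a line whose $\pm$ ends ``spiral onto'' $\lambda$ contains every finite subpath of $\lambda$ without being in the weak closure of $\{\lambda\}$. So the statement you reduce to --- ``no singular line of $\psi$ is a leaf of $\Lambda^+_\phi$'' --- is not the right statement, and the argument collapses at that point. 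What the hypothesis actually yields is $\Lambda^+_\phi \subset \closure\{\gamma\}$, and the paper must then work harder: it uses birecurrence of a generic leaf $\ell$ to locate $\ell$ in the accumulation set of one ideal endpoint $P$ of $\ti\gamma$, then splits according to whether $P \in \bdy\Fix(\Psi)$ (in which case $\ell$ would be carried by $\A_\na(\Lambdapmp)$, contradicting Case~(1)) or $P \in \Fix_+(\wh\Psi)$ (in which case Lemma~\refGM{LemmaFixPlusAccumulation} produces a conjugacy class $[a]$ weakly attracted to $\Lambda^+_\phi$ under $\psi$, contradicting Fact~\trefGM{FactAttractingLeaves}{ItemAllCircuitsRepelled}).

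Case~(3) has the direction right, correctly concluding that $\Lambda^+_\phi$ would have to be contained in the weak closure of the $\psi$-lamination $\Lambda'$. But the ``hard step'' you flag is precisely what your sketch does not supply. The proposed free-factor-support comparison does not give a contradiction: $\F_\supp(\Lambda^+_\phi) \sqsubset \F_\supp(\Lambda')$ is a perfectly consistent inclusion of free factor systems, so nothing is excluded. The paper avoids this containment analysis entirely: it chooses a conjugacy class $[a]$ whose realization in a CT for $\psi$ crosses an edge in the stratum for $\Lambda'$, so that $[a]$ is weakly attracted to $\gamma$ (hence to $\Lambda^+_\phi \subset \closure\{\gamma\}$) under $\psi$, and this already contradicts Fact~\trefGM{FactAttractingLeaves}{ItemAllCircuitsRepelled}. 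You should replace the support comparison by this weak-attraction-of-a-circuit argument (or some equivalent), as it is the mechanism that actually closes both Cases~(2) and~(3).
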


\begin{proof} Case~\pref{ItemNANA} is a consequence of the following: no conjugacy class carried by $\A_\na(\Lambdapmp)$ is weakly attracted to $\Lambdapp$ (Corollary~\ref{CorNAWellDefined}); axes of conjugacy classes are dense in the set of all lines carried by $\A_\na(\Lambdapmp)$ (as is true for any subgroup system); and $\B_\na(\Lambdapp)$ is a weakly closed subset of $\B$, which follows from the fact that being weakly attracted to $\Lambdapp$ is a weakly open condition on $\B$, an evident consequence of the definition of an attracting lamination.

For Case~\pref{ItemGenNA}, suppose that $\gamma$, and hence each $\phi^i_\#(\gamma)$, is a generic leaf of some $\Lambda^-_t \in \L(\psi)$. Choose $[a]$ to be a conjugacy class represented by a completely split circuit in $G'$ such that some term of its complete splitting is an edge of $H'_t$. By Fact~\trefGM{FactAttractingLeaves}{ItemSplitAnEdge}, $[a]$ is weakly attracted to $\gamma$ under iteration by~$\psi$. If $\gamma$ were weakly attracted to $\Lambdapp$ under iteration by $\phi$ then, since the $\phi^i_\#(\gamma)$'s all have  the same neighborhoods in~$\B$, the lamination $\Lambdapp$ would be in the closure of $\gamma$, and so $[a]$ would be weakly attracted to $\Lambdapp$ under iteration by $\psi=\phi^\inv$, contradicting Fact~\trefGM{FactAttractingLeaves}{ItemAllCircuitsRepelled}.

For Case~\pref{ItemSingNA}, choose $\Psi \in P(\psi)$  and a lift $\ti \gamma$ of $\gamma$ with endpoints in $\Fix_N(\wh\Psi) = \bdy\Fix(\Psi) \union \Fix_+(\wh\Psi)$. Assuming that $\gamma$ is  weakly attracted to $\Lambdapp$ under iteration by $\phi$, we argue to a contradiction. Since $\gamma$ is $\phi_\#$-invariant,  $\Lambdapp$ is contained in the weak closure of $\gamma$. Let $\ell$ be a generic leaf of~$\Lambdapp$. Since $\ell$ is birecurrent, $\ell$ is contained in the weak accumulation set of at least one of the endpoints, say~$P$, of $\ti \gamma$.  If $P \in \partial \Fix(\Psi)$ then $\ell$ is carried by ${\A}_{\na}(\Lambdapmp)$ in contradiction to Case~\pref{ItemNANA} and the obvious fact that $\ell$ is weakly attracted to~$\Lambdapp$. Thus $P \in \Fix_+(\wh\Psi)$. By Lemma~\refGM{LemmaFixPlusAccumulation}, for every line in the weak accumulation set of $P$, in particular for the generic leaf $\ell$ of $\Lambdapp$, there exists a conjugacy class $[a]$ whose iterates $\psi^k[\alpha]$ weakly accumulate on that line. It follows that $[a]$ is weakly attracted to $\Lambdapp$ under iteration by $\psi$. As in Case~\pref{ItemGenNA}, this contradicts Fact~\trefGM{FactAttractingLeaves}{ItemAllCircuitsRepelled}.   
\end{proof}

As shown in Example~\ref{ExNotClosedUnderConcat} below, by using concepts of concatenation one can sometimes construct lines in $\B_\na(\Lambda)$ not accounted for in the statement of Lemma~\ref{LemmaThreeNASets}. In the next definition we extend the usual concept of concatenation points to allow points at infinity.

\begin{definition}  \label{DefnConcatenation} Given any marked graph $K$ and oriented paths $\gamma_1,\gamma_2 \in \wh\B(K)$, we that $\gamma_1,\gamma_2$ are \emph{concatenable} if there exist lifts $\ti\gamma_i \subset \wt K$ with initial endpoints $P^-_i \in \wt K \union \bdy F_n$ and terminal endpoints $P^+_i \in \wt K \union \bdy F_n$ satisfying $P^+_1 = P^-_2$ and $P^-_1 \ne P^+_2$. The \emph{concatenation} of $\ti\gamma_1,\ti\gamma_2$ is the oriented path with endpoints $P^-_1, P^+_2$, denoted $\ti\gamma_1 \diamond \ti\gamma_2$. Its projection to~$K$,  denoted $\gamma_1 \diamond \gamma_2$, is called \emph{a concatenation of $\gamma_1,\gamma_2$}. This operation is clearly associative and so we can define multiple concatenations. This operation is also invertible, in particular any concatenation of the form $\gamma = \alpha \diamond \nu \diamond \beta$ can be rewritten as $\nu = \alpha^\inv \diamond \gamma \diamond \beta^\inv$.

Notice the use of the definite article upstairs, versus the indefinite article downstairs. ``The'' upstairs concatenation $\ti\gamma_1 \diamond \ti\gamma_2$ is well-defined once the lifts $\ti\gamma_1,\ti\gamma_2$ have been chosen. But because of the freedom of choice of those lifts, ``a'' downstairs concatenation $\gamma_1 \diamond \gamma_2$ is not generally well-defined: this fails precisely when $P^+_1=P^-_2$ is an endpoint of the axis of some element $\gamma$ of $F_n$ and neither $P^-_1$ nor $P^+_2$ is the opposite endpoint, in which case one can replace either of $\ti\gamma_1,\ti\gamma_2$ by a translate under $\gamma$ to get a different concatenation downstairs. This is a mild failure, however, and it is usually safe to ignore. 

A subset of $\wh\B(K)$ is \emph{closed under concatenation} if for any oriented paths $\gamma_1,\gamma_2 \in \wh\B(K)$, any of their concatenations $\gamma_1 \diamond \gamma_2$ is an element of $\wh\B(K)$.
\end{definition}

\begin{lemma}  \label{LemmaConcatenation} Continuing with the Notational Convention above, the set of elements of $\wh\B(G)$ that are not weakly attracted to $\Lambdapp$ under iteration by $\phi$ is closed under concatenation. In particular, $\B_\na(\Lambdapp)$ is closed under concatenation.
\end{lemma}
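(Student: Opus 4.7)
The plan is to argue by contradiction: suppose $\gamma_1, \gamma_2 \in \wh\B(G)$ are both not weakly attracted to $\Lambda^+_\phi$ but some concatenation $\gamma = \gamma_1 \diamond \gamma_2$ is weakly attracted. The central technical tool I would invoke is the standard attracting neighborhood property for $\Lambda^+_\phi$: fixing any generic leaf $\lambda$ of $\Lambda^+_\phi$, there is a finite subpath $\mu_0$ of $\lambda$ with the property that any element of $\wh\B(G)$ containing $\mu_0$ as a subpath is weakly attracted to $\Lambda^+_\phi$ under iteration of $\phi$. Concretely, one can take $\mu_0$ to be a subpath of $\lambda$ that crosses an edge $E$ of the \eg\ stratum $H_r$ in its interior, so that $\phi^n_\#(\mu_0)$ contains $f^n_\#(E)$ as a subpath and this grows out along generic leaves of $\Lambda^+_\phi$, invoking Fact~\refGM{FactAttractingLeaves}.

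Next I would use birecurrence of $\lambda$ to select a longer subpath $\mu_1$ of $\lambda$ that contains two occurrences of $\mu_0$ with disjoint interiors and a nontrivial gap between them. Weak attraction of $\gamma$ then ensures that for all sufficiently large $n$, the path $\phi^n_\#(\gamma)$ contains $\mu_1$ as a subpath and hence contains two disjoint copies of $\mu_0$. The structural observation I would exploit is that $\phi^n_\#$ commutes with tightening, giving $\phi^n_\#(\gamma) = \phi^n_\#(\gamma_1) \diamond \phi^n_\#(\gamma_2)$, where a single junction point $p_n$ joins an initial subpath of $\phi^n_\#(\gamma_1)$ to a terminal subpath of $\phi^n_\#(\gamma_2)$; every subpath of $\phi^n_\#(\gamma)$ that avoids the single point $p_n$ is a subpath of either $\phi^n_\#(\gamma_1)$ or of $\phi^n_\#(\gamma_2)$. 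Hence of the two disjoint copies of $\mu_0$ inside $\phi^n_\#(\gamma)$, at least one avoids $p_n$, which forces one of $\phi^n_\#(\gamma_1)$ or $\phi^n_\#(\gamma_2)$ to contain $\mu_0$.

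Passing to a subsequence, I would then assume without loss of generality that $\phi^n_\#(\gamma_1)$ contains $\mu_0$ for infinitely many $n$. By the attracting neighborhood property, $\phi^n_\#(\gamma_1)$ is itself weakly attracted to $\Lambda^+_\phi$, and since $\phi^{n+m}_\#(\gamma_1) = \phi^m_\#(\phi^n_\#(\gamma_1))$ this forces $\gamma_1$ to be weakly attracted, a contradiction. The in-particular assertion for $\B_\na(\Lambda^+_\phi)$ follows because a concatenation of two lines with matching lifts is itself a line, up to the mild ambiguity noted in Definition~\ref{DefnConcatenation}. The main obstacle is pinning down the attracting neighborhood property in the precise form needed—that a single finite witness $\mu_0$ suffices across all of $\wh\B(G)$, including rays, circuits, and finite paths—and verifying that the junction point $p_n$ in $\phi^n_\#(\gamma)$ really is a single well-defined point, which requires carefully tracking how $\phi^n_\#$ acts on concatenations whose lifts extend to $\bdy F_n$.
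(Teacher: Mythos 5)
Your proposal is essentially the same argument as the paper's. The paper also rests on the two observations you make -- that $f^m_\#(\gamma_1 \diamond \gamma_2)$ is a concatenation of a subpath of $f^m_\#(\gamma_1)$ with a subpath of $f^m_\#(\gamma_2)$, and that a ``doubled'' witness forces one of the two pieces to contain the undoubled witness -- except that it packages the witness via Fact~\refGM{FactTiles}: a generic leaf is an increasing union of nested tiles $\alpha_1 \subset \alpha_2 \subset \cdots$, each $\alpha_j$ containing at least two disjoint copies of $\alpha_{j-1}$, and ``not weakly attracted'' is recast as ``there is a $J$ bounding the tiles that persist under iteration.'' That nesting is precisely what resolves the issue you flag at the end: your suggested $\mu_0$ (a short subpath merely crossing an edge $E$ of $H_r$ in its interior) is not by itself a uniform witness on all of $\wh\B(G)$, since for a finite path $\sigma$ containing such a $\mu_0$ near an endpoint boundary cancellation can destroy the image of $E$; taking $\mu_0 = \alpha_j$ a sufficiently long tile with the two-disjoint-copies property is what makes both the ``uniform witness'' claim and the junction-point bookkeeping work, and it is exactly the substitution the paper's proof makes.
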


\begin{proof} Consider a concatenation $\gamma_1 \diamond \gamma_2$ with accompanying notation as in Definition~\ref{DefnConcatenation}. For each $m \ge 0$, the path $f^m_\#(\gamma_1 \diamond \gamma_2)$ is the concatenation of a subpath of $f^m_\#(\gamma_1)$ and a subpath of $f^m_\#(\gamma_2)$. Letting $\ell$ be a generic leaf of $\Lambdapp$, by Fact~\refGM{FactTiles} we may write $\ell$ as an increasing union of nested tiles $\alpha_1 \subset \alpha_2 \subset \cdots$ so that each $\alpha_j$ contains at least two disjoint copies of $\alpha_{j-1}$. By assumption $\gamma_1$ has the property that there exists an integer $J$ so that if $\alpha_j$ occurs in $f^m_\#(\gamma_1)$ for arbitrarily large $m$ then $j \le J$, and $\gamma_2$ satisfies the same property. This property is therefore also satisfied by $\gamma_1 \diamond \gamma_2$ (with a possibly larger bound $J$) and so $\gamma_1 \diamond \gamma_2$ is not weakly attracted to $\Lambdapp$.
\end{proof}

\begin{ex} \label{ExNotClosedUnderConcat} The set of lines satisfying \pref{ItemNANA}, \pref{ItemSingNA} and~\pref{ItemGenNA} of Lemma~\ref{LemmaThreeNASets} is generally not closed under concatenation. For example, suppose that for $i=1,2$ we have singular lines for $\psi$ of the form $\gamma'_i = \bar \alpha'_i \beta'_i \subset G'$ where $\alpha'_i\subset G'_u$ is a principal ray of $\Lambda^+_\psi$ (Definition~\refGM{DefSingularRay}) and $\beta'_i \subset G'_{u-1}$.   Let $\mu' \subset G'_{u-1}$ be any line that is asymptotic in the backward direction to $\beta'_1$ and in the forward direction to $\beta'_2$.  Then $\mu'$ is carried by $\A_{\na}(\Lambdapmp)$ and $\gamma'_3 =\gamma'_1 \diamond \mu' \diamond \bar \gamma'_2$ is not weakly attracted to $\Lambdapp$.  However, $\gamma'_3$ does not in general satisfy  any of \pref{ItemNANA}, \pref{ItemSingNA} and~\pref{ItemGenNA} of Lemma~\ref{LemmaThreeNASets}.    
\end{ex} 
 
We account for these kinds of examples as follows.  (See also Propositions~\ref{nonGeometricFullHeightCase} and \ref{geometricFullHeightCase}.) 
 
\begin{definition} \label{defn: extended na} Given a subgroup $A \subgroup F_n$ such that $[A] \in \A_{\na}(\Lambdapmp)$ and given $\Psi \in P(\psi)$, we say that $\Psi$ is \emph{$A$-related} if $\Fix_N(\wh\Psi) \cap \partial A \ne \emptyset$. Define the \emph{extended boundary} of $A$ to be 
$$\bdy_\ext(A,\psi) = \partial A \cup \left( \bigcup_\Psi \Fix_N(\wh\Psi)\right)
$$
where the union is taken over all $A$-related $\Psi \in P(\psi)$. Let $\B_\ext(A,\psi)$ denote the set of lines that have lifts with endpoints in $\bdy_\ext(A,\psi)$; this set is independent of the choice of $A$ in its conjugacy class. Define  
$$\B_\ext(\Lambdapmp;\psi)= \bigcup_{A \in \A_{\na}(\Lambdapmp)}  \B_\ext(A,\psi)
$$
\end{definition}
\noindent
Basic properties of $\B_\ext(\Lambdapmp;\psi)$ are established in the next section. 

We conclude this section with the statement of our main weak attraction result. The proof is given in Section~\ref{SectionNALinesGeneral}. 

\begin{theorem}[\textbf{Theorem G}]
\label{ThmRevisedWAT} 
If $\phi, \psi=\phi^{-1} \in \Out(F_n)$ are rotationless and $\Lambdapmp \in \L^\pm(\phi)$ then
$$\B_\na(\Lambdapp,\phi) = \B_\ext(\Lambdapmp;\psi) \union \B_\sing(\psi) \union \B_\gen(\psi)
$$
\end{theorem}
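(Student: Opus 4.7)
The plan is to establish the two set inclusions separately. The forward inclusion $\supseteq$ is fairly direct: Lemma~\ref{LemmaThreeNASets} shows that $\B_\sing(\psi)$ and $\B_\gen(\psi)$ are both contained in $\B_\na(\Lambda^+_\phi)$, so it remains to handle $\B_\ext(\Lambda^\pm_\phi;\psi)$. For this, I would take $\gamma \in \B_\ext(A,\psi)$ with $[A] \in \A_\na(\Lambda^\pm_\phi)$ and a lift $\tilde\gamma$ with endpoints $P^-,P^+ \in \bdy_\ext(A,\psi)$. If both endpoints lie in $\bdy A$, then $\gamma$ is carried by $\A_\na(\Lambda^\pm_\phi)$ and the claim follows from Lemma~\ref{LemmaThreeNASets}~\pref{ItemNANA}. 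Otherwise, whenever $P^\pm \in \Fix_N(\wh\Psi^\pm) \setminus \bdy A$ for some $A$-related $\Psi^\pm \in P(\psi)$, I would use the $A$-relatedness of $\Psi^\pm$ to select points $Q^\pm \in \bdy A \cap \Fix_N(\wh\Psi^\pm)$ and rewrite $\tilde\gamma$ as a concatenation of a singular line ray from $P^-$ to $Q^-$, a line carried by $A$ from $Q^-$ to $Q^+$, and a singular line ray from $Q^+$ to $P^+$. Lemma~\ref{LemmaThreeNASets} covers each piece, and Lemma~\ref{LemmaConcatenation} then gives $\gamma \in \B_\na(\Lambda^+_\phi)$.

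The reverse inclusion $\subseteq$ is the main technical work. Given $\gamma \in \B_\na(\Lambda^+_\phi)$, I would realize $\gamma$ in $G'$ and analyze its structure relative to the filtration of $G'$, the goal being to reduce by induction on the height $u$ of the stratum $H'_u$ corresponding to $\Lambda^+_\psi = \Lambda^-_\phi$ to the ``full height'' case where $H'_u$ is the top stratum of $G'$; this case is itself split between nongeometric and geometric subcases, addressed respectively in Propositions~\ref{nonGeometricFullHeightCase} and~\ref{geometricFullHeightCase}. The idea in each full-height case is that because $\gamma$ is not weakly attracted to $\Lambda^+_\phi$ under iteration by $\phi$, the realization of $\gamma$ in $G'$ cannot contain arbitrarily long subpaths of a generic leaf of $\Lambda^+_\psi$ beyond those absorbed into singular rays or copies of $\hat\rho'_u$; any unabsorbed occurrence of an $H'_u$-edge would, after enough forward iterations of $\psi$ (equivalently, backward iteration by $\phi$), force $\gamma$ to be weakly attracted to $\Lambda^+_\phi$ by $\phi$, by Fact~\trefGM{FactAttractingLeaves}{ItemSplitAnEdge} combined with the duality between $\phi$ and $\psi$. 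Each end of $\gamma$ is then forced into one of three configurations: it converges to a singular ray and so its ideal endpoint lies in $\Fix_N(\wh\Psi)$ for some $\Psi \in P(\psi)$; it is a generic end of some $\Lambda^-_t \in \L(\psi)$; or it lies in $\bdy A$ for some $[A] \in \A_\na(\Lambda^\pm_\phi)$ (using Lemma~\ref{ItemNotZPLimit} to control accumulation behavior). A case analysis on the two ends then places $\gamma$ into $\B_\gen(\psi)$, $\B_\sing(\psi)$, or $\B_\ext(\Lambda^\pm_\phi;\psi)$.

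The main obstacle I expect is the geometric full-height case. There $\A_\na(\Lambda^\pm_\phi) = [\pi_1 G_{r-1}] \cup \{[\langle\rho_r\rangle]\}$ is not a free factor system, and the edges of the \eg\ stratum genuinely appear in elements of $\A_\na$ via copies of the closed Nielsen path $\rho_r$. Distinguishing a line that is ``carried by $\A_\na$ plus finitely many singular ray attachments'' from one that contains essential surface behavior requires careful work inside the geometric model $X$ of \refGM{DefGeomModel}, using the malnormality and boundary injection results of Lemma~\refGM{LemmaLImmersed} together with the identification of the principal automorphisms in $P(\psi)$ with boundary monodromies of the surface $S$. In particular, the extended boundary condition demands that the ideal endpoint of each singular ray coming out of $\gamma$ be attached to a principal lift $\Psi$ that is $A$-related to the specific $[A] \in \A_\na(\Lambda^\pm_\phi)$ carrying the middle portion of $\gamma$; ensuring this correspondence between principal lifts and components of $\A_\na$ is the delicate point that the geometric model and the structure theorem for pseudo-Anosov boundary data should resolve.
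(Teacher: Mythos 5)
Your treatment of the forward inclusion $\B_\ext(\Lambdapmp;\psi) \union \B_\sing(\psi) \union \B_\gen(\psi) \subseteq \B_\na(\Lambdapp)$ matches the paper: decompose lines in $\B_\ext(\Lambdapmp;\psi)$ into singular lines and lines carried by $\A_\na(\Lambdapmp)$, then apply Lemma~\ref{LemmaThreeNASets} and Lemma~\ref{LemmaConcatenation}. That part is fine.

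The reverse inclusion is where the gaps lie, and they are substantial. First, the induction variable you describe does not make sense: you propose ``induction on the height $u$ of the stratum $H'_u$ corresponding to $\Lambda^+_\psi$,'' but $u$ is fixed once the CT $G'$ is chosen. The paper's actual induction is on the height $s$ of the line $\gamma$ relative to the CT $f\from G\to G$ for $\phi$, with Lemmas~\ref{nonGeometricFullHeightCase} and~\ref{geometricFullHeightCase} applied not only to $\Lambdapp$ but (inside the inductive step) to the other laminations $\Lambda^+_s$ associated to the EG strata $H_s$ with $s>r$, using a freshly chosen CT $G'$ for $\psi$ whose filtration is compatible with both $[G_r]$ and $[G_s]$. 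The key technical device---expressing $\gamma$ as an ``inductive concatenation'' of finitely many lines in $\B_\ext \union \B_\sing \union \B_\gen$ together with at most one line of strictly lower height, then invoking the invertibility of concatenation together with Lemma~\ref{LemmaConcatenation} and Proposition~\ref{PropStillClosed}---does not appear in your plan, and it is what makes the induction run.

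Second, your plan omits two whole families of cases that the paper must handle to carry out the height induction: the case when the highest stratum $H_s$ crossed by $\gamma$ is NEG (Cases 1A--1C), which requires Lemma~\ref{LemmaInverseIsPrincipal} to produce appropriate singular lines crossing the NEG edge; and the EG case in which $\gamma$ \emph{is} weakly attracted to the intermediate lamination $\Lambda^+_s$ (so $H_s\subset Z$), which is Case 2B and involves its own ``over--under'' decomposition argument showing the decomposition must terminate. Neither of these is covered by ``reduce to the full-height case.''

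Third, the mechanism you invoke in the full-height case is wrong as stated. You claim that an ``unabsorbed occurrence of an $H'_u$-edge would, after enough forward iterations of $\psi$, force $\gamma$ to be weakly attracted to $\Lambda^+_\phi$ by $\phi$.'' This conflates $\psi$-attraction to $\Lambda^+_\psi=\Lambda^-_\phi$ with $\phi$-attraction to $\Lambda^+_\phi$; these are unrelated, and indeed a generic leaf of $\Lambda^-_\phi$ contains plenty of $H'_u$-edges yet lies in $\B_\na(\Lambdapp)$ by Lemma~\ref{LemmaThreeNASets}~\pref{ItemGenNA}. The argument actually used in Lemma~\ref{nonGeometricFullHeightCase} runs differently: it first shows that a height-$r$ line with only finitely many $H_r$-edges in $G$ \emph{is} attracted to $\Lambdapp$, concluding that $\gamma$ crosses $H_r$ (equivalently $H'_u$) infinitely often, and then uses bounded cancellation to show that every finite subpath of $\gamma_{G'}$ is a subpath of an iterate of a line or circuit with a uniformly bounded number of $H'_u$-edges; it is the splitting of those bounded objects via Lemma~\refGM{LemmaEGUnifPathSplitting}, not any attraction dichotomy, that forces the principal-ray-plus-Nielsen-path structure.
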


\smallskip

\begin{remark} The sets $\B_\ext(\Lambdapmp;\psi)$, $\B_\sing(\psi)$, and $\B_\gen(\psi)$ need not be pairwise disjoint. For example, every line carried by $G'_{u-1}$ is in $\B_\ext(\Lambdapmp;\psi)$ and some of these can be in $\B_\sing(\psi)$ or in $\B_\gen(\psi)$.
\end{remark}
  
\begin{remark} It is not hard to show that if $\gamma \in  \B_\na(\Lambdapp)$ is birecurrent then $\gamma$ is either carried by $A_{\na}(\Lambdapmp)$ or is a generic leaf of some element of $\L(\psi)$. This shows that Theorem~\ref{ThmRevisedWAT} contains the Weak Attraction Theorem (Theorem~6.0.1 of \BookOne) as a special case.
\end{remark}

\subsection{$\B_\ext(\Lambdapmp;\psi) \union \B_\sing(\psi) \union \B_\gen(\psi)$ is closed under concatenation}
\label{SectionNAConcatenation} 

We continue with the notation for an inverse pair of rotationless outer automorphisms $\phi, \, \psi=\phi^\inv \in \Out(F_n)$ established at the beginning of Section~\ref{SectionTheoremGStatement}. 

Much of the work in this section is devoted to revealing details of the structure of $\B_\ext(\Lambdapmp;\psi)$. After a few such lemmas/corollaries, the main result of this section is that the union of the three subsets of $\B_\na(\Lambdapp)$ occurring in Theorem~\ref{ThmRevisedWAT} is closed under concatenation; see Proposition~\ref{PropStillClosed}. 

We shall abuse notation for elements of the set $\A_\na(\Lambdapmp)$ as described in Section~\refGM{SectionSSAndFFS}, writing $A \in \A_\na(\Lambdapmp)$ to mean that $A$ is a subgroup of $F_n$ whose conjugacy class $[A]$ is an element of the set $\A_\na(\Lambdapmp)$. Since $\A_\na(\Lambda^+_\phi)$ is a malnormal subgroup system (Proposition~\ref{PropVerySmallTree}), this notational abuse should not cause any confusion. 

\begin{lemma} \label{LemmaTwoLifts} If $\Psi_1 \ne \Psi_2 \in \Aut(F_n)$ are representatives of $\psi$ and $P \in \Fix(\wh\Psi_1) \cap \Fix(\wh\Psi_2)$ then there exists a nontrivial $a \in \Fix(\Psi_1) \intersect \Fix(\Psi_2)$ determining an inner automorphism~$i_a$, and there exists $A \in A_{\na}(\Lambdapmp)$, such that $a \in A$ and $P \in \Fix(\hat i_a) \subset \bdy A$.
\end{lemma}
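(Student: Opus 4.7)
The plan is to exploit the fact that $\Psi_1$ and $\Psi_2$ differ by an inner automorphism, and that sharing a fixed point at infinity with that inner automorphism forces control over the cyclic subgroup generated by the conjugator.

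First I would write $\Psi_2 = i_b \composed \Psi_1$ with $b \ne 1$ (possible since both represent $\psi$ and $\Psi_1\ne\Psi_2$). Then the hypothesis $P \in \Fix(\wh\Psi_1)\cap\Fix(\wh\Psi_2)$ yields $\wh{i_b}(P)=\wh\Psi_2\wh\Psi_1^{-1}(P)=P$, and since $\Fix(\wh{i_b}) = \{b^\infty,b^{-\infty}\}$, we conclude that $P$ is an endpoint of the axis of $b$. Let $a \in F_n$ be the generator of the maximal cyclic subgroup containing $b$, oriented so that $P = a^\infty$ (replacing $a$ by $a^\inv$ if necessary); so $b \in \<a\>$.

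Next I would verify that $\Psi_1(a)=a$. Since $\wh\Psi_1(a^\infty)=\Psi_1(a)^\infty = a^\infty$, the element $\Psi_1(a)$ lies in the stabilizer of $a^\infty$ in $F_n$, which is the maximal cyclic subgroup $\<a\>$ (as $a$ is a maximal root). So $\Psi_1(a)=a^k$ for some $k$, and applying the same argument to $\Psi_1^\inv$ (which also fixes $P$) gives $|k|=1$; the sign is positive since otherwise $\wh\Psi_1$ would send $a^\infty$ to $a^{-\infty}$. Then $\Psi_2(a) = i_b\Psi_1(a) = bab^\inv = a$, because $a$ and $b$ commute in the cyclic group $\<a\>$. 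This provides the nontrivial $a \in \Fix(\Psi_1)\intersect\Fix(\Psi_2)$.

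It remains to locate $a$ inside some $A \in \A_\na(\Lambdapmp)$ with $P \in \Fix(\hat\imath_a)\subset\bdy A$. Since $\Psi_1(a)=a$ we have $\psi([a])=[a]$ and hence $\phi([a])=[a]$. If $[a]$ were weakly attracted to $\Lambdapp$ under iteration by $\phi$, then the constant sequence $\phi^n_\#(c_a)=c_a$ would weakly converge to a generic leaf $\lambda$ of $\Lambdapp$, forcing $c_a = \lambda$; this contradicts the fact from \BookOne\ Lemma~3.1.16 (used already in the proof of Corollary~\ref{CorNAIndOfPM}) that generic leaves of $\Lambdapp$ are non-periodic. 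Thus $[a]$ is not weakly attracted to $\Lambdapp$, and by Corollary~\ref{CorNAWellDefined} there exists $A \in \A_\na(\Lambdapmp)$ containing (a conjugate of) $a$. Since $\<a\> \subgroup A$, we have $\Fix(\hat\imath_a)=\{a^\infty,a^{-\infty}\} = \bdy\<a\> \subset \bdy A$, and $P = a^\infty \in \Fix(\hat\imath_a)$, completing the proof.

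The main obstacle is the step forcing $\Psi_1(a)=a$: one has to be careful to pass to the maximal root of $b$ so that the stabilizer-of-infinity argument actually pins down the image of $a$ up to sign, rather than only constraining it to some cyclic subgroup properly containing $\<b\>$.
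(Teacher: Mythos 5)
Your proof is correct and takes essentially the same route as the paper: write the difference between $\Psi_1$ and $\Psi_2$ as an inner automorphism, observe that $P$ must then be an axis endpoint, extract a common fixed element $a$, and use Corollary~\ref{CorNAWellDefined} to locate $a$ inside some $A \in \A_\na(\Lambdapmp)$. The only stylistic difference is that the paper cites Fact~\refGM{FactTwoLifts} to conclude directly that the conjugator itself lies in $\Fix(\Psi_1)\cap\Fix(\Psi_2)$, whereas you rederive this by first passing to the primitive root of the conjugator — a harmless variant, since the axis endpoints of $a$ and of its primitive root coincide, so both choices yield $\Fix(\hat i_a) \subset \bdy A$.
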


\begin{proof} Choosing $a$ so that $\Psi_1 = i_a \Psi_2$ it follows that $i_a = \Psi_1 \Psi_2^\inv$ fixes $P$, and so $a \in \Fix(\Psi_1) \intersect \Fix(\Psi_2)$ (Fact~\refGM{FactTwoLifts}), implying that $[a]$ is not weakly attracted to $\Lambdapp$. Applying Corollary~\ref{CorNAWellDefined}, the conjugacy class $[a]$ is carried by $\A_\na(\Lambdapmp)$, and so there exists $A \in \A_\na(\Lambdapmp)$ such that $a \in A$, which implies that $\Fix(\hat i_a) \subset \bdy A$.
\end{proof}

\begin{lemma}\label{LemmaAtMostOneA}  
For each $\Psi \in P(\psi)$ there exists at most one $A \in \A_\na(\Lambda^\pm_\phi)$ such that $\Psi$ is $A$-related.
\end{lemma}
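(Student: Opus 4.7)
The plan is to upgrade the condition ``$\Psi$ is $A$-related'' to the stronger condition ``$\Psi(A) = A$'', and then to combine this with the malnormality of $\A_\na(\Lambdapmp)$ recorded in Lemma~\ref{LemmaZPClosed}~\pref{item:UniqueLift} to preclude two distinct such $A$'s.

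First I would establish the intermediate claim: if $\Psi \in P(\psi)$ is $A$-related, with $P \in \Fix_N(\wh\Psi) \cap \bdy A$, then $\Psi(A) = A$. The starting point is Corollary~\ref{CorNAIndOfPM}, which makes $\A_\na(\Lambdapmp)$ a $\psi$-invariant subgroup system. Since $\psi$ is rotationless, its action on the finite set of conjugacy classes $\A_\na(\Lambdapmp)$ is trivial, so I may fix $\Psi_A \in \Aut(F_n)$ representing $\psi$ with $\Psi_A(A) = A$. Any other representative of $\psi$ differs by an inner automorphism, so write $\Psi = i_g \Psi_A$ for some $g \in F_n$. The fixed-point equation $\wh\Psi(P) = P$ then reads $\wh\Psi_A(P) = g^{-1} P$, and since $\Psi_A$ preserves $\bdy A$ we conclude $g^{-1} P \in \bdy A$, so $P \in \bdy A \cap \bdy(g A g^{-1})$. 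Applying Lemma~\ref{LemmaZPClosed}~\pref{item:UniqueLift} to the two $\A_\na(\Lambdapmp)$-subgroups $A$ and $gAg^{-1}$ forces $gAg^{-1} = A$; malnormality of $A$ in $F_n$ then yields $g \in N(A) = A$, and so $\Psi(A) = gAg^{-1} = A$.

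Second, I would deduce the lemma. Suppose $\Psi$ is both $A_1$-related and $A_2$-related, with $P_i \in \Fix_N(\wh\Psi) \cap \bdy A_i$, and assume for contradiction $A_1 \ne A_2$. By the first step $\Psi$ preserves both $A_1$ and $A_2$. If $P_1 = P_2$, then $\bdy A_1 \cap \bdy A_2 \ne \emptyset$, which directly contradicts Lemma~\ref{LemmaZPClosed}~\pref{item:UniqueLift}. So the remaining case, which I expect to be the main obstacle, is $P_1 \ne P_2$. My plan there is a coset-rigidity argument: the set of representatives of $\psi$ preserving $A_i$ is precisely the coset $\Psi_{A_i}\Inn(A_i)$, so the existence of $\Psi$ preserving both forces $\Psi \in \Psi_{A_1}\Inn(A_1) \cap \Psi_{A_2}\Inn(A_2)$; viewed inside the outer class $\psi\Inn(F_n) \approx F_n$ as an intersection of cosets of $A_1$ and $A_2$, malnormality ($A_1 \cap A_2 = \{1\}$) makes this intersection contain at most a single element. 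To close the loop and produce a bona fide contradiction, I would open up a $\ct$ $f' \from G' \to G'$ representing $\psi$, use that the principal lift $\Psi$ has its nonrepelling fixed-point set $\Fix_N(\wh\Psi)$ concentrated at a single principal vertex, and argue that its accumulation points in $\bdy F_n$ can therefore lie in the boundary of only one noncontractible component of the graph $K'$ from Definition~\ref{defn:Z}---and by Proposition~\ref{PropVerySmallTree}~\pref{ItemA_naMalnormal}, each such component corresponds to a single element of $\A_\na(\Lambdapmp)$, forcing $A_1 = A_2$ after all.
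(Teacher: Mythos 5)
Your Step 1 (upgrading ``$A$-related'' to ``$\Psi(A) = A$'') is correct and uses the malnormality recorded in Lemma~\ref{LemmaZPClosed}~\pref{item:UniqueLift} appropriately, and your $P_1 = P_2$ subcase is handled. But there is a genuine gap in the $P_1 \ne P_2$ subcase. The coset-rigidity computation only establishes \emph{uniqueness} of a representative of $\psi$ preserving both $A_1$ and $A_2$: the intersection $\Psi_{A_1}\Inn(A_1) \cap \Psi_{A_2}\Inn(A_2)$ has at most one element. That is not a contradiction, because your hypothesis has handed you exactly such a $\Psi$, so the intersection is a singleton and the argument simply stops. You recognize this, but the ``close the loop'' step you propose rests on the claim that $\Fix_N(\wh\Psi)$ is ``concentrated at a single principal vertex,'' and this is not true. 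A principal lift of a CT can fix several vertices of the universal cover, and $\Fix_N(\wh\Psi)$ contains $\bdy\Fix(\Psi)$, which is infinite whenever $\Fix(\Psi)$ has rank $\ge 2$; in addition there may be isolated attractors associated to distinct principal directions at different vertices. There is no a priori reason the relevant accumulation points lie in the boundary of a single noncontractible component of $K$.

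The information you are not exploiting is the structure of the nonattracting path set $\<Z,\hat\rho_r\>$. The paper's argument runs as follows: the line $\gamma$ joining $P_1$ to $P_2$ is a singular line of $\psi$, hence by Lemma~\ref{LemmaThreeNASets} it lies in $\B_\na(\Lambdapp)$. Since $P_j \in \bdy A_j$ and, by Lemma~\ref{LemmaZPClosed}~\pref{item:ZP=NA}, lines carried by $A_j$ lie in $\<Z,\hat\rho_r\>$, both ends of $\gamma$ are rays in $\<Z,\hat\rho_r\>$: write $\gamma = \overline\rho_- \diamond \gamma_0 \diamond \rho_+$. After replacing $\gamma$ by a $\phi_\#$-iterate so that $\gamma_0$ has fixed endpoints, Lemma~\ref{LemmaConcatenation} shows the finite path $\gamma_0 = \rho_- \diamond \gamma \diamond \overline\rho_+$ is not weakly attracted, and then Lemma~\ref{defining Z}~\pref{ItemZPFinitePaths} forces $\gamma_0 \in \<Z,\hat\rho_r\>$. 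The groupoid property (Lemma~\ref{LemmaZPClosed}~\pref{Item:Groupoid}) then puts all of $\gamma$ in $\<Z,\hat\rho_r\>$, hence $\gamma$ is carried by a single $A_3 \in \A_\na(\Lambdapmp)$, so $\{P_1,P_2\} \subset \bdy A_3$, and malnormality gives $A_1 = A_3 = A_2$. This is the piece your proposal lacks: a mechanism for pushing non-attraction of the whole line down to a finite central piece via Lemma~\ref{defining Z}~\pref{ItemZPFinitePaths}, which is what actually links the two ideal endpoints to one and the same component of $\A_\na$.
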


\begin{proof} Suppose that for $j=1,2$ there exist $A_j \in \A_{\na}(\Lambdapmp)$ and $P_j \in \Fix_N(\wh\Psi) \cap \partial A_j$. The line $\ti\gamma$ connecting $P_1$ to $P_2$ projects to a line $\gamma \in \B_\sing(\psi)$ that by Lemma~\ref{LemmaThreeNASets} is not weakly attracted to $\Lambda^+$.  Since $P_j \in \partial A_j$, and since by Lemma~\ref{LemmaZPClosed}~\pref{item:ZP=NA} each line that is carried by $A_j$ is contained in $\<Z,\hat \rho_r\>$, the ends of $\gamma$ are contained in $\<Z,\hat \rho_r\>$, and so we may assume that $\gamma = \overline\rho_- \diamond \gamma_0 \diamond \rho_+$ where the rays $\rho_-,\rho_+$ are in $\<Z,\hat \rho_r\>$. After replacing $\gamma$ with a $\phi_\#$-iterate we may also assume that the central subpath $\gamma_0$ has endpoints at fixed vertices.  Since none of $\gamma$, $\rho_-$, $\rho_+$ are weakly attracted to $\Lambdapp$, by Lemma~\ref{LemmaConcatenation} neither is $\gamma_0 = \rho_- \diamond \gamma \diamond \overline \rho_+$. Lemma~\ref{defining Z}~\pref{ItemZPFinitePaths} implies that $\gamma_0$ is contained in  $\<Z,\hat \rho_r\>$ and Lemma~\ref{LemmaZPClosed}~\pref{Item:Groupoid} then shows that $\gamma$ is contained in $\<Z,\hat\rho_r\>$. By Lemma~\ref{LemmaZPClosed}~\pref{item:ZP=NA} it follows that $\gamma$ is carried by   $A_{\na}(\Lambdapmp)$, which means that $\bdy\ti\gamma = \{P_1,P_2\} \subset \partial A_3$ for some $A_3 \in \A_\na(\Lambda^\pm_\phi)$. By Proposition~\ref{PropVerySmallTree}~\pref{ItemA_naMalnormal} and Fact~\refGM{FactBoundaries} it follows that $A_1=A_3=A_2$.
\end{proof}

\begin{corollary}\label{CorRelatedBdy}
If $\Psi \in P(\psi)$, $A \in \A_\na(\Lambdapmp)$, and $\Psi$ is $A$-related, then $\Fix(\Psi) \subgroup A$ and each point of $\Fix_N(\wh\Psi) \setminus \bdy A$ is an isolated attractor for $\wh\Psi$.
\end{corollary}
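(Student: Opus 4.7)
The plan is to establish $\Fix(\Psi) \subgroup A$ first, and then deduce the second conclusion as an immediate consequence. Fix a nontrivial $a \in \Fix(\Psi)$. Since $\Psi(a) = a$, the extension $\wh\Psi$ fixes both endpoints of the axis of~$a$, and these endpoints lie in $\bdy\Fix(\Psi) \subset \Fix_N(\wh\Psi)$. Consequently the axis of $a$ projects to a line in $\B_\sing(\psi)$, which by Lemma~\ref{LemmaThreeNASets}~\pref{ItemSingNA} is not weakly attracted to $\Lambdapp$. Passing from this periodic line to its conjugacy class, $[a]$ is not weakly attracted to $\Lambdapp$; Corollary~\ref{CorNAWellDefined} then produces $A' \in \A_\na(\Lambdapmp)$ with $a \in A'$, and in particular $\bdy\<a\> \subset \bdy A'$.

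Next I would check that $\Psi$ is $A'$-related: since $\bdy\<a\>$ is contained in both $\bdy A'$ and $\Fix_N(\wh\Psi)$, the intersection $\bdy A' \cap \Fix_N(\wh\Psi)$ is nonempty. By hypothesis $\Psi$ is $A$-related as well, so Lemma~\ref{LemmaAtMostOneA} forces $A' = A$, and hence $a \in A$. Since $a$ was an arbitrary nontrivial element of $\Fix(\Psi)$, we conclude $\Fix(\Psi) \subgroup A$.

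For the second conclusion, given $Q \in \Fix_N(\wh\Psi) \setminus \bdy A$, recall the decomposition $\Fix_N(\wh\Psi) = \bdy\Fix(\Psi) \union \Fix_+(\wh\Psi)$. The first conclusion gives $\bdy\Fix(\Psi) \subset \bdy A$, so $Q \notin \bdy\Fix(\Psi)$, forcing $Q \in \Fix_+(\wh\Psi)$; that is, $Q$ is an attractor of $\wh\Psi$. An attracting fixed point in $\bdy F_n$ is automatically isolated in the full fixed set, since a sufficiently small basin of attraction of $Q$ contains no other fixed point of $\wh\Psi$. Hence each such $Q$ is an isolated attractor, completing the proof. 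The one delicate point is the bridge from ``the axis of $a$ is not weakly attracted to $\Lambdapp$ as a line'' to ``$[a]$ is not weakly attracted to $\Lambdapp$ as a conjugacy class'' required to invoke Corollary~\ref{CorNAWellDefined}; once that is in place the rest is a direct application of the uniqueness statement in Lemma~\ref{LemmaAtMostOneA} together with the definition of $\Fix_N$.
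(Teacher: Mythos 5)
Your argument is correct and arrives at the same conclusion, but it routes through a slightly different combination of lemmas than the paper. The paper handles the containment $\Fix(\Psi) \subgroup A$ in one shot: after observing that every conjugacy class in $\Fix(\Psi)$ is non-attracted, it applies Corollary~\ref{CorNASubgroups} to the \emph{subgroup} $\Fix(\Psi)$ (which has finite rank), obtaining a single $A'$ with $\Fix(\Psi) \subgroup A'$, and then Lemma~\ref{LemmaAtMostOneA} gives $A' = A$. You instead work element by element, applying Corollary~\ref{CorNAWellDefined} to each $a \in \Fix(\Psi)$ to get a possibly different $A_a$ and then using Lemma~\ref{LemmaAtMostOneA} separately for each $a$ to force $A_a = A$. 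This is essentially an inline re-derivation of the special case of Corollary~\ref{CorNASubgroups} that is needed here; both approaches work, and yours avoids invoking Corollary~\ref{CorNASubgroups} (hence Lemma~\refGM{LemmaVSElliptics}) directly. The paper also first dispatches the case $\Fix(\Psi) = \{1\}$ via Lemma~\refGM{LemmaFixPhiFacts}; your argument handles it silently since the first conclusion is vacuous there and the second follows from your decomposition of $\Fix_N$. For the ``isolated attractor'' conclusion you give a short self-contained argument; the paper cites Lemma~\refGM{LemmaFixPhiFacts}, which packages the same fact.

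On the ``delicate bridge'' you flag: it is not actually delicate, and there is a more direct route that avoids the detour through $\B_\sing(\psi)$ and Lemma~\ref{LemmaThreeNASets} entirely. Since $a \in \Fix(\Psi)$, the conjugacy class $[a]$ is fixed by $\psi$ and hence by $\phi$; a $\phi$-periodic conjugacy class cannot be weakly attracted to $\Lambda^+_\phi$ (its circuit is $f_\#$-periodic, while generic leaves of $\Lambda^+_\phi$ are not periodic, cf.\ Fact~\trefGM{FactAttractingLeaves}{ItemAllCircuitsRepelled}). If you do keep your route through $\B_\sing(\psi)$, the bridge from ``periodic line not weakly attracted'' to ``conjugacy class not weakly attracted'' is exactly the natural bijection between periodic lines and circuits, which the paper already uses freely (for instance in the proof of Lemma~\ref{LemmaZPClosed}~\pref{Item:circuits}), so there is no gap.
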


\begin{proof} If $\Fix(\Psi)$ is trivial then by Lemma~\refGM{LemmaFixPhiFacts} each point of $\Fix_N(\wh\Psi)$ is an isolated attractor and we are done. Otherwise, noting that the conjugacy class of each nontrivial element of $\Fix(\Psi)$ is carried by $\A_\na(\Lambdapmp)$, applying Corollary~\ref{CorNASubgroups} we have $\Fix(\Psi) \subgroup A'$ for some $A' \in \A_\na(\Lambdapmp)$, and so $\bdy\Fix(\Psi) \subset \bdy A'$. It follows that $\Psi$ is $A'$-related. By Lemma~\ref{LemmaAtMostOneA} we have $A'=A$, and applying Lemma~\refGM{LemmaFixPhiFacts} completes the proof.
\end{proof}


\begin{corollary}\label{CorDisjBdyA}   If $A_1 \ne A_2 \in \A_\na(\Lambda^\pm_\phi)$ then $\bdy_\ext (A_1,\psi) \cap \bdy_\ext(A_2,\psi) = \emptyset$.
\end{corollary}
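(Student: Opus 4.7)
The plan is to argue by contradiction: suppose there exists $P \in \bdy_\ext(A_1,\psi) \cap \bdy_\ext(A_2,\psi)$ with $A_1 \ne A_2$, and unpack Definition~\ref{defn: extended na} to split into cases depending on how $P$ arises in each of the extended boundaries. The toolkit is already in hand: Lemma~\ref{LemmaZPClosed}~\pref{item:UniqueLift} (distinct $A$'s have disjoint boundaries and trivial intersection), Lemma~\ref{LemmaAtMostOneA} (each $\Psi \in P(\psi)$ is $A$-related for at most one $A$), Corollary~\ref{CorRelatedBdy} ($\Fix(\Psi) \subset A$ whenever $\Psi$ is $A$-related), and Lemma~\ref{LemmaTwoLifts} (a common fixed point at infinity for two distinct lifts produces a nontrivial common fixed element of $F_n$).

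For each $i \in \{1,2\}$, either $P \in \bdy A_i$ or $P \in \Fix_N(\wh\Psi_i)$ for some $A_i$-related $\Psi_i \in P(\psi)$. First, if $P \in \bdy A_1 \cap \bdy A_2$, then Lemma~\ref{LemmaZPClosed}~\pref{item:UniqueLift} gives an immediate contradiction. Second, if $P \in \bdy A_1$ and $P \in \Fix_N(\wh\Psi_2)$ with $\Psi_2$ being $A_2$-related, then the condition $P \in \Fix_N(\wh\Psi_2) \cap \bdy A_1$ shows that $\Psi_2$ is also $A_1$-related; Lemma~\ref{LemmaAtMostOneA} then forces $A_1 = A_2$, a contradiction. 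The symmetric case ($P \in \bdy A_2$ and $P \in \Fix_N(\wh\Psi_1)$) is identical.

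The remaining case is $P \in \Fix_N(\wh\Psi_1) \cap \Fix_N(\wh\Psi_2)$, where $\Psi_i$ is $A_i$-related for each $i$. If $\Psi_1 = \Psi_2$, then this single automorphism is both $A_1$- and $A_2$-related, again contradicting Lemma~\ref{LemmaAtMostOneA}. If $\Psi_1 \ne \Psi_2$, then since $\Fix_N(\wh\Psi_i) \subset \Fix(\wh\Psi_i)$, Lemma~\ref{LemmaTwoLifts} provides a nontrivial element $a \in \Fix(\Psi_1) \cap \Fix(\Psi_2)$. Corollary~\ref{CorRelatedBdy} then gives $a \in \Fix(\Psi_1) \subset A_1$ and $a \in \Fix(\Psi_2) \subset A_2$, so $a \in A_1 \cap A_2$, which contradicts the triviality of $A_1 \cap A_2$ asserted in Lemma~\ref{LemmaZPClosed}~\pref{item:UniqueLift}.

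There is no real obstacle here; the entire argument is a bookkeeping exercise whose only mild subtlety is remembering that $\bdy_\ext$ has two kinds of contributions and that non-attracting fixed points from $\Fix_N$ still lie in $\Fix(\wh\Psi)$, so Lemma~\ref{LemmaTwoLifts} applies in the final case. All four cases collapse to one of two contradictions: either two distinct elements of $\A_\na(\Lambdapmp)$ share boundary/intersection, or a single principal automorphism is $A$-related to two distinct $A$'s.
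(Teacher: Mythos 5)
Your proof is correct and follows essentially the same strategy as the paper's: assume a common point, unpack the two possible sources in $\bdy_\ext$ per Definition~\ref{defn: extended na}, and dispatch every combination via Lemma~\ref{LemmaAtMostOneA}, Lemma~\ref{LemmaTwoLifts}, and malnormality of $\A_\na$. The paper compresses the case analysis by first using malnormality to assume WLOG $Q \notin \bdy A_1$, and in the last case it feeds the conclusion of Lemma~\ref{LemmaTwoLifts} (that $Q \in \bdy A_3$) back through Lemma~\ref{LemmaAtMostOneA} to force $A_1 = A_3 = A_2$, whereas you instead invoke Corollary~\ref{CorRelatedBdy} to place the element $a$ in $A_1 \cap A_2$ and contradict malnormality directly — a cosmetic variation, since both routes rest on the same lemmas and Corollary~\ref{CorRelatedBdy} is available at this point in the paper.
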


\begin{proof} We assume that $Q \in \bdy_\ext (A_1,\psi)\cap \bdy_\ext (A_2,\psi)$ and argue to a contradiction. After interchanging $A_1$ and $A_2$ if necessary, we may assume by 
Proposition~\ref{PropVerySmallTree}~\pref{ItemA_naMalnormal} and Fact~\refGM{FactBoundaries} 
that $Q \not \in \partial A_1$ and hence that $Q \in \Fix_N(\wh\Psi_1)$ for some $A_1$-related $\Psi_1 \in P(\psi)$. Lemma~\ref{LemmaAtMostOneA} implies that $\Psi_1$ is not $A_2$-related and so $Q \not \in \partial A_2$. The only remaining possibility is that $Q \in \Fix_N(\wh\Psi_2)$ for some $A_2$-related $\Psi_2 \in P(\psi)$. But then Lemma~\ref{LemmaTwoLifts} implies that $Q \in \partial A_3$ for some $A_3 \in \A_\na(\Lambda^\pm_\phi)$, and then Lemma~\ref{LemmaAtMostOneA} implies that $A_1 = A_3 = A_2$.   
\end{proof}

\begin{corollary} \label{psi contained in partial A}  If $\Psi \in P(\psi)$, $A \in \A_\na(\Lambda^\pm_\phi)$, and $\Fix_N(\wh\Psi) \cap \bdy_\ext (A,\psi) \ne \emptyset$ then $\Psi$ is $A$-related; in particular, $\Fix_N(\wh\Psi) \subset \bdy_\ext (A,\psi)$.
\end{corollary}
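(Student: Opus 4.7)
The plan is to pick a point $Q \in \Fix_N(\wh\Psi) \cap \bdy_\ext(A,\psi)$ and unpack the definition
$$\bdy_\ext(A,\psi) = \bdy A \cup \bigcup_{\Psi' \text{ $A$-related}} \Fix_N(\wh{\Psi'})$$
to locate $Q$ either in $\bdy A$ or in $\Fix_N(\wh{\Psi'})$ for some $A$-related $\Psi' \in P(\psi)$. In the first case, $Q \in \Fix_N(\wh\Psi) \cap \bdy A$ so $\Psi$ is $A$-related by definition, and we are done. In the second case, if $\Psi' = \Psi$ we are done immediately, so we may assume $\Psi' \ne \Psi$ and reduce to showing $Q \in \bdy A$.

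For the remaining case, the key step is to apply Lemma~\ref{LemmaTwoLifts} to the distinct principal representatives $\Psi, \Psi' \in P(\psi)$ and their common fixed point $Q$ (using that $\Fix_N \subset \Fix$ since non-repelling fixed points are in particular fixed). This produces a nontrivial element $a \in \Fix(\Psi) \cap \Fix(\Psi')$ together with some $A'' \in \A_\na(\Lambdapmp)$ satisfying $a \in A''$ and $Q \in \Fix(\wh{i_a}) \subset \bdy A''$. Now since $\Psi'$ is $A$-related, Corollary~\ref{CorRelatedBdy} gives $\Fix(\Psi') \subgroup A$, so $a \in A$ as well. Thus $a$ is a nontrivial element of $A \cap A''$, and the malnormality of $\A_\na(\Lambdapmp)$ recorded in Lemma~\ref{LemmaZPClosed}~\pref{item:UniqueLift} forces $A = A''$. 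Consequently $Q \in \bdy A$, which shows that $\Psi$ is itself $A$-related.

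The "in particular" clause is then immediate: once we know that $\Psi$ is $A$-related, the set $\Fix_N(\wh\Psi)$ appears as one of the terms of the union in the definition of $\bdy_\ext(A,\psi)$, so $\Fix_N(\wh\Psi) \subset \bdy_\ext(A,\psi)$.

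I expect no substantial obstacle here, since the whole argument is a short dichotomy driven by previously established structure. The one subtle point to get right is that when the initial point $Q$ lies only in an extended boundary contribution $\Fix_N(\wh{\Psi'})$ with $\Psi' \ne \Psi$, one must bootstrap from the two-lift lemma to a genuine membership $Q \in \bdy A$ using both Corollary~\ref{CorRelatedBdy} (to pull $\Fix(\Psi')$ into $A$) and malnormality (to identify the new component $A''$ with $A$). This is the only place the argument uses more than unwinding definitions.
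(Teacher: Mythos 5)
Your proof is correct and follows the same overall skeleton as the paper's: unwind the definition of $\bdy_\ext(A,\psi)$, dispose of the trivial subcases, and apply Lemma~\ref{LemmaTwoLifts} to $\Psi \ne \Psi'$ at the common fixed point~$Q$. The one place you diverge is the final identification of the vertex group: the paper, having obtained $Q \in \bdy A'$ from Lemma~\ref{LemmaTwoLifts}, simply invokes Corollary~\ref{CorDisjBdyA} (disjointness of extended boundaries) to conclude $A' = A$. You instead feed the nontrivial element $a$ produced by Lemma~\ref{LemmaTwoLifts} through Corollary~\ref{CorRelatedBdy} to place it in $A$, and then use malnormality (Lemma~\ref{LemmaZPClosed}~\pref{item:UniqueLift}) to force $A'' = A$. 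Both work; your route re-derives locally a little of what Corollary~\ref{CorDisjBdyA} already packages, whereas the paper's is a one-line citation. The variation is harmless and the argument is sound.
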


\begin{proof}  Choose $Q \in \Fix_N(\wh\Psi) \cap \bdy_\ext (A,\psi)$.    If $Q \in \partial A$ we're done so we may assume that $ Q \in \Fix_N(\wh\Psi')$  for some   $A$-related $\Psi'$.   If $\Psi = \Psi'$  we are done.  Otherwise, Lemma~\ref{LemmaTwoLifts} implies that  $Q \in \partial A'$ for some $A' \in \A_\na(\Lambda^\pm_\phi)$ and Corollary~\ref{CorDisjBdyA} implies that $A' = A$ so again we are done. 
\end{proof}
 
\begin{proposition} \label{PropStillClosed} If the oriented lines $\gamma_1, \gamma_2$ are in the set $\B_\ext(\Lambdapmp;\psi) \union \B_\sing(\psi) \union \B_\gen(\psi)$ and are concatenable then any concatenation $\gamma_1 \diamond \gamma_2$ is also in that set. 

More precisely, given lifts $\ti \gamma_j$ with initial and terminal endpoints $P^-_j$ and $P^+_j$ respectively, if $P^+_1 = P^-_2$ and $P^-_1 \ne P^+_2$ then either there exists $\Psi \in P(\psi)$ such that the three points $P^-_1,P^+_1=P^-_2, P^+_2$ are in $\Fix_N(\wh\Psi)$ or there exists $A \in \A_\na(\Lambda^\pm_\phi)$ such that those three points are in $\bdy_\ext(A,\psi)$.
\end{proposition}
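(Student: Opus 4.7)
The plan is to prove the more precise second statement, from which the first follows at once: once the three points $P^-_1$, $P^+_1 = P^-_2$, $P^+_2$ all lie in a common $\Fix_N(\wh\Psi)$ the line $\gamma_1 \diamond \gamma_2$ is in $\B_\sing(\psi)$, and once they all lie in a common $\bdy_\ext(A,\psi)$ it is in $\B_\ext(\Lambdapmp;\psi)$. A preliminary observation removes $\B_\gen$ from separate consideration: every generic leaf of an attracting lamination of $\psi$ has a lift whose endpoints lie in $\Fix_+(\wh\Psi) \subset \Fix_N(\wh\Psi)$ for some principal $\Psi \in P(\psi)$, so $\B_\gen(\psi) \subset \B_\sing(\psi)$. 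This reduces us to a three-way case analysis according to whether each $\gamma_j$ lies in $\B_\sing(\psi)$ or in $\B_\ext(\Lambdapmp;\psi)$.

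In the all-singular case, pick $\Psi_1,\Psi_2 \in P(\psi)$ with $\{P^-_j,P^+_j\} \subset \Fix_N(\wh\Psi_j)$. If $\Psi_1 = \Psi_2$ we take $\Psi = \Psi_1$ and are done. Otherwise $\Psi_1 \ne \Psi_2$ share the fixed point $P^+_1 = P^-_2$, so Lemma~\ref{LemmaTwoLifts} furnishes $A \in \A_\na(\Lambdapmp)$ with $P^+_1 \in \bdy A$ and a common nontrivial element of $\Fix(\Psi_1) \cap \Fix(\Psi_2) \cap A$; hence both $\Psi_1$ and $\Psi_2$ are $A$-related. Corollary~\ref{psi contained in partial A} then gives $\Fix_N(\wh\Psi_j) \subset \bdy_\ext(A,\psi)$ for $j=1,2$, and all three points land in $\bdy_\ext(A,\psi)$.

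In the all-$\B_\ext$ case, pick $A_j \in \A_\na(\Lambdapmp)$ with $\{P^-_j,P^+_j\} \subset \bdy_\ext(A_j,\psi)$. The common point $P^+_1 = P^-_2$ lies in $\bdy_\ext(A_1,\psi) \cap \bdy_\ext(A_2,\psi)$, so Corollary~\ref{CorDisjBdyA} forces $A_1 = A_2$, placing the three points in that single extended boundary. In the mixed case, say $\gamma_1$ singular via $\Psi_1 \in P(\psi)$ and $\gamma_2 \in \B_\ext$ via $A_2$, the shared point lies in $\Fix_N(\wh\Psi_1) \cap \bdy_\ext(A_2,\psi)$, so Corollary~\ref{psi contained in partial A} yields that $\Psi_1$ is $A_2$-related and $\Fix_N(\wh\Psi_1) \subset \bdy_\ext(A_2,\psi)$; together with $\{P^-_2,P^+_2\} \subset \bdy_\ext(A_2,\psi)$ this again places all three points in a common extended boundary.

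The main obstacle is already absorbed into the structural corollaries established earlier in this section: Lemma~\ref{LemmaTwoLifts} handles coincidence of fixed points of distinct principal lifts, Corollary~\ref{CorDisjBdyA} says extended boundaries of distinct components of $\A_\na(\Lambdapmp)$ are disjoint, and Corollary~\ref{psi contained in partial A} promotes a single point of $\Fix_N(\wh\Psi) \cap \bdy_\ext(A,\psi)$ to the full containment $\Fix_N(\wh\Psi) \subset \bdy_\ext(A,\psi)$. Once these are in hand the proposition is straightforward bookkeeping, with the only conceptual step being the initial reduction $\B_\gen(\psi) \subset \B_\sing(\psi)$.
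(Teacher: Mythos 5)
Your reduction at the outset --- the claim that $\B_\gen(\psi) \subset \B_\sing(\psi)$ because ``every generic leaf of an attracting lamination of $\psi$ has a lift whose endpoints lie in $\Fix_+(\wh\Psi)$ for some principal $\Psi$'' --- is false, and it is exactly where the real work of this proposition lives. A singular line must have both ideal endpoints in $\Fix_N(\wh\Psi)$ for a single $\Psi \in P(\psi)$; there are only countably many such lines, while an element of $\L(\psi)$ has uncountably many generic leaves, and a typical generic leaf is not even fixed by $\psi$ as a line. What is true (and nontrivial) is Lemma~\ref{LemmaGenericAsymptotic}: a generic leaf becomes singular only when one of its ends is asymptotic to an end of another generic leaf, and proving that requires Lemma~\ref{one illegal turn} together with an analysis of Nielsen paths and principal rays. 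So your three-way case analysis omits precisely the cases in which one of the $\gamma_j$ is a non-singular generic leaf, and in those cases your argument has nothing to place the far endpoint $P^+_2$ (or $P^-_1$) into a common $\Fix_N(\wh\Psi)$ or $\bdy_\ext(A,\psi)$.

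For comparison, the paper handles these cases as follows. When $\gamma_1 \in \B_\sing(\psi)$ via $\Psi$ and $\gamma_2$ is a generic leaf with $P^+_2 \notin \Fix_N(\wh\Psi)$, the image $\Psi_\#(\ti\gamma_2)$ projects to a generic leaf asymptotic to but distinct from $\gamma_2$, and Lemma~\ref{LemmaGenericAsymptotic} then forces $\gamma_2 \in \B_\sing(\psi)$, returning you to the all-singular subcase. When $P^-_1,P^+_1 \in \bdy_\ext(A,\psi)$ and $\gamma_2$ is a generic leaf, one first rules out $P^+_1 \in \Fix_N(\wh\Psi)$ by the same asymptotic-leaves argument, concluding $P^+_1 \in \bdy A$; then birecurrence of $\gamma_2$ together with weak closedness of the set of lines carried by $\<Z,\hat\rho_r\>$ (Lemma~\ref{LemmaZPClosed}) shows $\gamma_2$ is carried by $\A_\na(\Lambdapmp)$ and, via malnormality (Lemma~\ref{LemmaZPClosed}~\pref{item:UniqueLift}), that $P^+_2 \in \bdy A$. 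When both lines are generic leaves, asymptotic ends plus Lemma~\ref{LemmaGenericAsymptotic} again reduce to the singular case. Your remaining bookkeeping for the singular/extended-boundary combinations, using Lemma~\ref{LemmaTwoLifts}, Corollary~\ref{CorDisjBdyA} and Corollary~\ref{psi contained in partial A}, does agree with the paper, but without Lemma~\ref{LemmaGenericAsymptotic} (or an equivalent) the proof is incomplete.
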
 

\begin{proof} The first sentence is an immediate consequence of the second, to whose proof we now turn.

\smallskip

\textbf{Case 1:} $\gamma_1 \in \B_\sing(\psi)$. \quad We have $P^-_1,P^+_1 \in \Fix_N(\wh\Psi)$ for some  $\Psi \in P(\psi)$.  There are three subcases. First, if $P^-_2,P^+_2 \in \bdy_\ext(A,\psi)$ for some $A \in \A_\na(\Lambda^\pm_\phi)$ then $P^-_1,P^+_1 \in \bdy_\ext(A,\psi)$  by Corollary~\ref{psi contained in partial A}  and we are done. The second subcase is that $P^-_2,P^+_2 \in \Fix_N(\wh\Psi')$ for some $\Psi' \in P(\psi)$; if $\Psi ' = \Psi$ then we are done; if $\Psi' \ne \Psi$ then $P^+_1=P^-_2 \in \bdy_\ext(A,\psi)$ for some $A \in \A_\na(\Lambda^\pm_\phi)$  by Lemma~\ref{LemmaTwoLifts}, and so $P^-_2,P^+_2 \in \bdy_\ext(A,\psi)$ by Corollary~\ref{psi contained in partial A}  so we are reduced to the first subcase. The final subcase is that  $\gamma_2$ is a generic line of  some  $\Lambda^-_t \in \L(\psi)$. Assuming without loss that  $P^+_2  \not  \in \Fix_N(\wh\Psi)$, the projection of $\Psi_\#(\ti \gamma_2)$ is a generic leaf of $\Lambda^-_t$ that is asymptotic to $\gamma_2$ but not equal to~$\gamma_2$. The next lemma says that this puts us in the second subcase and so we are done.

\begin{lemma}  \label{LemmaGenericAsymptotic}  
Assume that $\theta \in \Out(F_n)$ is rotationless. If $\ell',\ell''$ are each generic lines of elements of $\L(\theta)$, and if some end of $\ell'$ is asymptotic to some end of $\ell''$, then $\ell',\ell'' \in \B_\sing(\theta)$.  
\end{lemma}

\noindent
This lemma extends Lemma 3.3 of \Axes\ in which it is assumed that $\phi$ is irreducible. 

Putting off the proof of Lemma~\ref{LemmaGenericAsymptotic} for a bit, we continue with the proof of Proposition~\ref{PropStillClosed}. Having finished Case~1, by symmetry we may now assume that $\gamma_j \not \in \B_\sing(\psi)$ for $j=1,2$.   

\smallskip

\textbf{Case 2:} $P^-_1,P^+_1 \in \bdy_\ext(A,\psi)$ \textbf{for some} $A \in \A_\na(\Lambda^\pm_\phi)$.\quad   If $P^+_2 \in \bdy_\ext(A,\psi)$ we are done. By Corollary~\ref{CorDisjBdyA}, the only remaining possibility is that $\gamma_2$ is a generic leaf of an element of $\L(\psi)$. If $P^+_1 \in \Fix_N(\wh\Psi)$ for some $\Psi \in  P(\psi)$ then, as shown above, Lemma~\ref{LemmaGenericAsymptotic} implies that $P^+_2 \in \Fix_N(\wh\Psi)$ and hence that $\gamma_2 \in \B_\sing(\psi)$ which is a contradiction. Thus $P^+_1 \in \partial A$. Since $\gamma_2$ is birecurrent, Fact~\refGM{FactLinesClosed} implies that $\gamma_2$ is by carried by~$\A_\na(\Lambdapmp)$. Applying 
Lemma~\ref{LemmaZPClosed}~\pref{item:UniqueLift} 
it follows that $P^+_2 \in \partial A$ and we are done.  

\smallskip

By symmetry of $\gamma_1$ and $\gamma_2$, the only remaining case is:

\textbf{Case 3: $\gamma_1$ and $\gamma_2$ are generic leaves of elements of $\L(\psi)$.}  Since they have asymptotic ends, they are leaves of the same element of $L(\psi)$ so are singular lines by Lemma~\ref{LemmaGenericAsymptotic}.  As we have already considered this case, the proof is complete.
\end{proof}

It remains to prove Lemma~\ref{LemmaGenericAsymptotic}, but we first prove the following, which is similar to Lemma 5.11 of \BH, Lemma 4.2.6 of \BookOne\ and Lemma 2.7 of \Axes.


\begin{lemma}  \label{one illegal turn} Suppose that $\fG$ is a \ct,  that $H_r$ is an \eg\ stratum, that $\gamma \subset G$ is line of height $r$   with   exactly one illegal turn of height $r$ and that $f^{K_\gamma}(\gamma)$ is $r$-legal for some minimal $K_{\gamma}$.  Then $K_\gamma \le K $ for some  $K$ that is independent of $\gamma$.
\end{lemma}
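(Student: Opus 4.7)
The plan is to combine bounded cancellation for iterates of $f$ with Perron--Frobenius expansion in the stratum $H_r$, following the template of \BH~Lemma~5.11, \BookOne~Lemma~4.2.6, and \Axes~Lemma~2.7.

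For $0 \le k \le K_\gamma$ set $\sigma_k = f^k_\#(\gamma)$. By the minimality of $K_\gamma$, each $\sigma_k$ with $k < K_\gamma$ still has a unique height-$r$ illegal turn, so I split $\sigma_k = \sigma_{k,1} \diamond \sigma_{k,2}$ at that turn with both sides $r$-legal, and I write $L_{k,i}$ for the number of $H_r$-edges on side~$i$.

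Since $H_r$ is EG with Perron--Frobenius eigenvalue $\lambda_r > 1$, I choose $N_0$ so that $|f^{N_0}(E)|_r \ge 2$ for every edge $E$ of $H_r$, where $|\cdot|_r$ denotes the count of $H_r$-edges. The essential observation is that for any $r$-legal $\alpha$, the tightening of $f(\alpha)$ cancels no $H_r$-edges: height-$r$ turns of $\alpha$ are legal by hypothesis, while turns between an $H_r$-edge and a $G_{r-1}$-edge cannot be illegal because $Df$ preserves the partition of directions into $H_r$-directions and $G_{r-1}$-directions, so any cancellation in $f(\alpha)$ lies within $G_{r-1}$. Consequently $|f^{N_0}_\#(\alpha)|_r = \sum_{E \in \alpha \cap H_r} |f^{N_0}(E)|_r \ge 2|\alpha|_r$. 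Let $C$ be a bounded cancellation constant for $f^{N_0}$.

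The tightening that produces $\sigma_{k+N_0}$ from $f^{N_0}(\sigma_{k,1}) \cdot f^{N_0}(\sigma_{k,2})$ happens only at the image of the illegal turn, removing at most $C$ edges from each side, yielding the central inequality
\[
L_{k+N_0,\, i} \;\ge\; 2 L_{k,i} - C.
\]
If $L_{k,i} > C$ for some $k$, iterating gives $L_{k+jN_0,\, i} > C$ for all $j \ge 0$, so the illegal turn would persist on side~$i$ forever. Since $\sigma_{K_\gamma}$ is $r$-legal, the turn must resolve on some side, say side~$1$, and hence $L_{k,1} \le C$ for every $k \in [0, K_\gamma]$.

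To bound $K_\gamma$ itself I apply a finiteness/pigeonhole argument. The passage from $\sigma_{k,1}$ to $\sigma_{k+1,1}$ near the illegal turn is determined by a bounded \emph{tip}: the at most $C$ $H_r$-edges of $\sigma_{k,1}$ together with a bounded neighborhood of their $G_{r-1}$-connectors. Invoking the CT structure, namely that iterated $H_r$-edge images have complete splittings into terms drawn from a finite alphabet, these tips take values in a finite set whose cardinality depends only on $f$ and $C$. If $K_\gamma$ exceeds this cardinality plus a short transient, some tip must repeat and produce a periodic cycle in which the illegal turn never resolves, contradicting the minimality of $K_\gamma$; therefore $K_\gamma \le K$ for a constant $K$ depending only on $f$. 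The main obstacle is formalizing the ``tip'' so that the set of possibilities is finite, which is precisely the subtle point requiring its own careful proof in each of the cited predecessor lemmas.
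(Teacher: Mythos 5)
Your approach is genuinely different from the paper's, but it contains a real gap. The step ``the turn must resolve on some side, say side~$1$, and hence $L_{k,1} \le C$ for every $k$'' tacitly assumes that the unique height-$r$ illegal turn can only resolve by the cancellation swallowing all of the $H_r$-edges on one side of the turn. That is not so: when $f$ is applied and the two sides are tightened at the junction, the cancellation can simply terminate at a turn between two $H_r$-directions that happens to be $r$-legal, leaving many $H_r$-edges intact on both sides. (This is exactly what distinguishes a generic illegal turn from the illegal turn of an indivisible Nielsen path, where the cancellation lands back on the same illegal turn.) So the side-length inequality $L_{k+N_0,i} \ge 2L_{k,i}-C$ does not force resolution to occur only through side-swallowing, and the claimed bound $L_{k,1}\le C$ need not hold. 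There is a further issue: each $\sigma_{k,i}$ is a ray, and nothing in the hypotheses prevents it from crossing infinitely many $H_r$-edges, in which case $L_{k,i}=\infty$ and the bookkeeping collapses entirely. Finally, your ``tip'' pigeonhole is not actually made finite --- the lower-stratum connectors between consecutive $H_r$-edges in the tip have no a priori length bound, so the set of possible tips is infinite --- and you acknowledge this as the unresolved point.

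The paper avoids all of this with a compactness argument: if no uniform $K$ exists, pick $\gamma_i$ with $K_{\gamma_i}\to\infty$, pass to a subsequence so that the illegal turn is at a fixed turn of $G$ and the two rays converge, and let $\gamma=\bar\sigma\tau$ be the limit line. Its unique height-$r$ illegal turn then persists under all iterates of $f_\#$, so Lemma~4.2.6 of \BookOne\ produces $m>0$ with a splitting $f^m_\#(\gamma)=\bar R^-\cdot\rho\cdot R^+$, where $\rho$ is the indivisible height-$r$ Nielsen path. Because this is a splitting, and because $\gamma_i\to\gamma$ implies $f^m_\#(\gamma_i)$ also decomposes around a copy of $\rho$ for large $i$ and any such decomposition is itself a splitting, the illegal turn of $\rho$ survives in every $f^k_\#(\gamma_i)$, contradicting $K_{\gamma_i}<\infty$. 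In effect, the hard finiteness you were trying to extract from a ``tip'' is encoded once and for all in the uniqueness of the indivisible Nielsen path of height $r$, and compactness is the device that lets one appeal to it.
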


\begin{proof}    If the lemma fails there exists a sequence $\gamma_i$ such that $K_i = K_{\gamma_i} \to \infty$.     Write $\gamma_i  = \bar \sigma_i \tau_i$ where the turn $(\sigma_i, \tau_i)$ is the illegal turn of height $r$.  After passing to a subsequence we may assume that $\sigma_i \to \sigma$ and $\tau_i \to \tau$ for some rays $\sigma$ and $\tau$.  The line $\gamma = \bar \sigma \tau$ has height $r$ and $f^k_\#(\gamma)$ has exactly one illegal turn of height $r$ for all $k \ge 0$.  Lemma~4.2.6 of \BookOne\ implies that there exists $m > 0$ and a splitting $f^m_\#(\gamma) = \bar R^- \cdot \rho\cdot  R^+$ where $\rho$ is the unique \iNp\ of height $r$.      It follows that  for all sufficiently large $i$, $f^m_\#(\gamma_i)$ has a decomposition into subpaths $f^m_\#(\gamma_i)= \bar R_i^- \rho R^+_i$  where the height $r$ illegal turn in $\rho$ is  the only height $r$ illegal turn in $\gamma_i$.    Since any such decomposition is a splitting, $f^k_\#( \gamma_i)$ has an illegal turn of height $r$ for all $k$  in contradiction to our choice of $\gamma_i$.  
\end{proof} 


\paragraph{Proof of Lemma \ref{LemmaGenericAsymptotic}.} By symmetry we need prove only that $\ell' \in \B_\sing(\theta)$. By Fact~\refGM{FactTwoEndsGeneric}, each end of each generic leaf of an element of $\L(\theta)$ has the same free factor support as the whole leaf, and so $\ell'$ and $\ell''$ must be generic leaves of the same $\Lambda \in \L(\theta)$. 

Let $\fG$ be a \ct\ representing $\theta$ and let $H_r$ be the \eg\ stratum corresponding to~$\Lambda$. For each $j \ge 0$, there are generic leaves  $\ell'_{j}$ and $\ell''_{j}$ of $\Lambda$ such that $f_\#^j(\ell'_{j}) =\ell'$ and  $f_\#^j(\ell''_{j}) = \ell''$. Fixing a common end of $\ell'$ and $\ell''$, the corresponding common ends of $\ell'_j$ and $\ell''_j$ determine a maximal common subray $R_j$ of $\ell_j'$ and $\ell_j''$. Denote the rays in $\ell_j'$ and $\ell_j''$ that are complementary to $R_j$ by $R_j'$ and $R_j''$ respectively.  Let $\gamma_j = \bar R_j' R_j''$. 

Suppose at first that each $\gamma_j$ is $r$-legal. Lemma 5.8 of \BH\  implies that  no height $r$ edges of $\gamma_j$ are cancelled when  $f^j(\gamma_j)$ is tightened to $\gamma_0$. Let $E_j,E_j'$ and $E_j''$ be the first height $r$  edges of  $R_j, R_j'$ and $R_j''$ respectively, let $w_j,w'_j$ and $w''_j$ be their initial vertices and let $d_j,d_j'$ and $d_j''$ be their initial directions. Let $\mu'_j$  be the finite subpath of $\ell'_j $ connecting $w_j'$ to $w_j$. To complete the proof in this case we will show that $\mu'_0$ is a Nielsen path, that $R_0$ is the principal ray determined by iterating $d_0$ (see Definition~\refGM{DefSingularRay}) and that $R_0'$ is the principal ray determined by iterating $d_0'$.

If $w_j=w'_j$ then $d_j,d'_j$ determine distinct gates, and otherwise $w_j,w'_j$ are each incident to an edge of height~$<r$. A similar statement holds for $w_j$, $w''_j$. In all cases it follows that $w_j,w'_j$ and $w''_j$ are principal vertices of~$f$.
Moreover, the following hold for all $i,j\ge 0$:
$$  f^{i}(w_{j+i}) = w_j \qquad f^{i}(w'_{j+i}) = w'_j \qquad
 f^{i}(d_{j+i}) = d_j \qquad f^{i}(d'_{j+i}) = d'_j \qquad
 f^{i}_\#(\mu'_{j+i}) = \mu'_j$$
The first two of these equalities imply that $w = w_j, w'=w'_j \in \Fix(f)$ are independent of $j$; the third and fourth imply that $E = E_j$ and $E' = E'_j$ are independent of $j$; in conjunction with Lemma~\trefGM{FactTiles}{ItemUndergraphPieces}, the last equality  implies that $\mu = \mu_j$ is a Nielsen path that is independent of $j$. It follows that $\ell'$ is the increasing union of the subpaths $f^j_\#(\bar E')\mu f^j_\#(E)$ and so $\ell'$ is a pair of principal rays connected by a Nielsen path. Applying Fact~\refGM{FactPrincipalLift} completes the proof that $\ell' \in \B_\sing(\theta)$ when each $\gamma_j$ is $r$-legal.
  
  It remains to consider the case that  that some $\gamma_l$ is not $r$-legal.  Assuming without loss that $\gamma_0$ is not $r$-legal, each $\gamma_j$ is not $r$-legal.   Lemma~\ref{one illegal turn} implies that   $f^k_\#(\gamma_j)$ has an illegal turn of height $r$ for all $k \ge 0$ and Lemma 4.2.6 of \BookOne\ implies that  there is a splitting $\gamma_j  = \tau_j' \cdot \rho_j \cdot \tau_j''$ where some $f_\#$-iterate of $\rho_j$ is the unique indivisible Nielsen path $\rho$ with height $r$.   Since  $f^{i}_\#(\rho_{j+i}) = \rho_j$ for all $i,j \ge 0$, $\rho_j = \rho$ for all $j$.   Let $E'$ be the first edge of height $r$ in the ray $\bar \tau_0'$ and let $E$ be the initial edge of $\rho_0$.  Both of these edges are contained in $\ell'$ and we let    $\mu$ be the subpath of $\ell'$ that connects their initial vertices.  Arguing as in the previous case,  $\mu$ is a Nielsen path and $\ell'$ is the increasing union of the subpaths $f^j_\#(\bar E')\mu f^j_\#(E)$ which proves that $\ell' \in \B_\sing(\theta)$.  \hfill\qed
  
\subsection{Application --- Proof of Theorem H}
\label{SectionAttractionRepulsion}

Before turning in later sections to the proof of Theorem~\ref{ThmRevisedWAT} (Theorem~G), we use it to prove the following:

\begin{corollary}[\textbf{Theorem H}]
\label{CorOneWayOrTheOther}
Given rotationless $\phi,\psi = \phi^\inv \in \Out(F_n)$, a dual lamination pair $\Lambdapmp \in \L^\pm(\phi)$, and a line $\gamma \in \B$, the following hold:
\begin{enumerate}
\item\label{ItemNonuniformOWOTO}
If $\gamma$ is not carried by $\A_{\na}(\Lambdapmp)$ then it is either weakly attracted to $\Lambdapp$ under iteration by $\phi$ or to $\Lambda^-_\phi$ under iteration by $\psi$.    
\item\label{ItemUniformOWOTO}
For any weak neighborhoods $V^+$ and $V^-$ of generic leaves of $\Lambda^+_\phi$ and $\Lambda^-_\phi$, respectively, there exists an integer $m \ge 1$ (independent of $\gamma$) such that at least one of the following holds: \ $\gamma \in V^-$; \ $\phi^m(\gamma) \in V^+$; \ or $\gamma$ is carried by $\A_{na}(\Lambdapmp)$. 
\end{enumerate}
\end{corollary}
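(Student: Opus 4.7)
Part (1) is the contrapositive of: if $\gamma$ is not weakly attracted to $\Lambda^+$ under $\phi$ and not weakly attracted to $\Lambda^-$ under $\psi$, then $\gamma$ is carried by $\A_\na(\Lambda^\pm)$. The strategy is to apply Theorem~\ref{ThmRevisedWAT} twice: once to $\phi$ with the pair $\Lambda^\pm$, placing $\gamma \in \B_\ext(\Lambda^\pm;\psi) \cup \B_\sing(\psi) \cup \B_\gen(\psi)$, and once to $\psi$ with the same pair (using Corollary~\ref{CorNAIndOfPM} to identify the $\A_\na$'s), placing $\gamma \in \B_\ext(\Lambda^\pm;\phi) \cup \B_\sing(\phi) \cup \B_\gen(\phi)$; we then intersect the two characterizations at the level of endpoints of a lift $\ti\gamma$. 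Any endpoint of $\ti\gamma$ lying in $\Fix_N(\wh\Psi)$ for some $\Psi \in P(\psi)$ but outside every $\partial A$ with $A \in \A_\na(\Lambda^\pm)$ is, by Corollary~\ref{CorRelatedBdy}, an isolated $\wh\Psi$-attractor; then Lemma~\refGM{LemmaFixPlusAccumulation} together with birecurrence of generic leaves of $\Lambda^+_\psi = \Lambda^-_\phi$ forces $\gamma$ to be weakly attracted to $\Lambda^-_\phi$ under $\psi$, contradicting our assumption. The symmetric statement with $\Phi \in P(\phi)$ disposes of analogous $\phi$-side endpoints, and Lemma~\ref{LemmaGenericAsymptotic} handles endpoints coming from asymptotic generic leaves. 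Thus both endpoints must lie in $\partial A$ for a single $A \in \A_\na(\Lambda^\pm)$ by malnormality (Proposition~\ref{PropVerySmallTree}\pref{ItemA_naMalnormal}), and $\gamma$ is carried by $A$, completing (1).

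For (2) I will use a mid-orbit weak compactness argument. After replacing $V^+$ and $V^-$ by smaller neighborhoods we may arrange $\phi(V^+) \subset V^+$ and $\psi(V^-) \subset V^-$, since such forward-invariant neighborhoods are cofinal among weak neighborhoods of generic leaves of attracting laminations. Suppose no uniform $m$ works; then for each $m \ge 1$ there is a line $\gamma_m$ not carried by $\A_\na(\Lambda^\pm)$ with $\gamma_m \notin V^-$ and $\phi^m(\gamma_m) \notin V^+$. Set $k_m = \lfloor m/2 \rfloor$ and $\alpha_m = \phi^{k_m}(\gamma_m)$. Since $\A_\na(\Lambda^\pm)$ is $\phi_\#$-invariant, $\alpha_m$ is not carried by $\A_\na(\Lambda^\pm)$; moreover $\psi^{k_m}(\alpha_m) = \gamma_m \notin V^-$ and $\phi^{m-k_m}(\alpha_m) = \phi^m(\gamma_m) \notin V^+$. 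By Lemma~\ref{ItemNotZPLimit}(1), a subsequence $\alpha_{m_j}$ weakly converges to some $\alpha_\infty$ not carried by $\A_\na(\Lambda^\pm)$. Applying (1), $\alpha_\infty$ is weakly attracted to $\Lambda^+$ under $\phi$ or to $\Lambda^-$ under $\psi$. In the first case, pick $N$ with $\phi^N(\alpha_\infty) \in V^+$; weak continuity of $\phi^N$ yields $\phi^N(\alpha_{m_j}) \in V^+$ for large $j$, and forward invariance of $V^+$ under $\phi$ propagates this to $\phi^{m_j - k_{m_j}}(\alpha_{m_j}) \in V^+$ once $m_j - k_{m_j} \ge N$, contradicting the construction. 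The second case is symmetric, using forward invariance of $V^-$ under $\psi$ and $\psi^{k_{m_j}}(\alpha_{m_j}) = \gamma_{m_j} \notin V^-$.

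\textbf{Main obstacle.} The delicate step is the endpoint analysis in (1): intersecting the two Theorem~G characterizations and reducing every surviving configuration to ``both endpoints in $\partial A$'' requires the combined force of malnormality of $\A_\na$, the disjointness and absorption properties of extended boundaries (Corollaries~\ref{CorDisjBdyA} and~\ref{psi contained in partial A}), and asymptotic analysis of generic leaves via Lemma~\ref{LemmaGenericAsymptotic}. Once (1) is established, part (2) becomes a routine diagonal argument enabled by the compactness encoded in Lemma~\ref{ItemNotZPLimit}.
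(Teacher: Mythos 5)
Part (1) of your proposal has a genuine gap. After intersecting the two applications of Theorem~\ref{ThmRevisedWAT}, one of the possibilities you must dispose of is that $\gamma \in \B_\gen(\psi)$ or $\B_\gen(\phi)$, i.e.\ that $\gamma$ is a generic leaf of some attracting lamination $\Lambda^+_t\in\L(\phi)$ (or element of $\L(\psi)$) \emph{other} than the pair $\Lambdapmp$. Such a leaf is typically not a singular line, and its ideal endpoints need not lie in any $\partial A$ nor in $\Fix_N(\wh\Psi)$ for any principal $\Psi$, so none of the tools in your outline (Corollary~\ref{CorRelatedBdy}, Corollaries~\ref{CorDisjBdyA} and~\ref{psi contained in partial A}, malnormality) says anything about it; Lemma~\ref{LemmaGenericAsymptotic} only covers the special situation of two generic leaves sharing an end. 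The paper handles this case by a mechanism that is absent from your plan and cannot be recovered from endpoint bookkeeping: since $\gamma\in\B_\na(\Lambdapp;\phi)$, the stratum $H_t$ carrying $\Lambda^+_t$ lies in the nonattracting subgraph $Z$, and then $f_\#$-invariance and weak closedness of $\<Z,\hat\rho_r\>$ (Lemma~\ref{defining Z}, Lemma~\ref{LemmaZPClosed}) show that the whole lamination, hence $\gamma$, is carried by $\A_\na(\Lambdapmp)$.

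The second problem is your proposed shortcut for endpoints lying in some $\Fix_N(\wh\Psi)$ but outside every $\partial A$: the claim that such an isolated $\wh\Psi$-attractor forces $\gamma$ to be weakly attracted to $\Lambda^-_\phi$ under $\psi$ is not what Lemma~\refGM{LemmaFixPlusAccumulation} gives, and it is false in general --- the attractor may be the endpoint of a principal ray belonging to a different element of $\L(\psi)$ (or to an NEG edge), so nothing forces $\Lambda^-_\phi$ into the closure of $\gamma$. Indeed, in the paper's proof these configurations are not contradictions: in Cases 2a and 3b it is shown, using Lemma~\ref{LemmaTwoLifts} and Corollary~\ref{CorDisjBdyA} together with the \emph{other} Theorem-G characterization, that $P\notin\Fix_N(\wh\Phi)$ for the unique $\Phi$ fixing $P$, which rules out $\B_\sing$ and $\B_\ext$ on that side and forces $\gamma\in\B_\gen$ --- i.e.\ exactly the generic-leaf case you have not treated. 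So the two omissions are linked: without the generic-leaf argument there is no place to send these lines, and the contradiction you propose instead does not hold. By contrast, your part (2) is essentially the paper's own argument (forward-invariant neighborhoods, the half-power trick, Lemma~\ref{ItemNotZPLimit}, then part (1) applied to the weak limit) and is fine once (1) is repaired.
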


\begin{proof}  For (1) we assume that $\gamma$ is not weakly attracted to $\Lambda^+_\phi$ under iteration of $\phi$ and that $\gamma$ is not weakly attracted to $\Lambda^+_\psi=\Lambda^-_\phi$ under iteration of $\psi=\phi^\inv$, and we prove that $\gamma$ is carried by $\A_{\na}(\Lambdapmp)$. Applying Theorem~\ref{ThmRevisedWAT} to both $\psi$ and $\phi$, we have 
\begin{align*}
\gamma &\in \B_\na(\Lambda^+_\phi,\phi) \intersect \B_\na(\Lambda^+_\psi,\psi) \\
  &= \left( \B_\ext(\Lambdapmp;\psi) \union \B_\sing(\psi) \union \B_\gen(\psi) \right) \intersect \left( \B_\ext(\Lambdapmp;\phi) \union \B_\sing(\phi) \union \B_\gen(\phi) \right) \qquad(*)
\end{align*}
and using this we proceed by cases.

\medskip

\textbf{Case 1:} $\gamma \in \B_\gen(\phi) \union \B_\gen(\psi)$. By symmetry we may assume $\gamma \in \B_\gen(\phi)$ and so $\gamma$ is a generic leaf of some $\Lambda^+_t \in \L(\phi)$. Since $\gamma \in \B_\na(\Lambda^+_\phi,\phi)$ it follows that $\Lambdapp \not\subset \Lambda^+_t$, and so the stratum $H_t$ associated to $\Lambda^+_t$ is contained in $Z$, by Fact~\refGM{FactLeafAsLimit} and the definition of $Z$. Since $\<Z,\hat \rho_r\>$ is $f_\#$-invariant (Lemma~\ref{defining Z}\pref{ItemZPPathsInv}) and the set of lines that it carries is closed (Lemma~\ref{LemmaZPClosed}\pref{item:closed}),  $\<Z,\hat \rho_r\>$ carries $\Lambda^+_t$ by (Fact~\refGM{FactLeafAsLimit}). Lemma~\ref{LemmaZPClosed}\pref{Item:circuits}  then implies that  $\A_{\na}(\Lambdapmp)$ carries $\Lambda^+_t$ and hence $\gamma$.

\medskip

Having settled Case~1 we may assume that
$$\gamma \in \left( \B_\ext(\Lambdapmp;\psi) \union \B_\sing(\psi) \right) \intersect \left( \B_\ext(\Lambdapmp;\phi) \union \B_\sing(\phi)\right)
$$
 
\textbf{Case 2:} $\gamma \in \B_\ext(\Lambdapmp;\psi) \union \B_\ext(\Lambdapmp;\phi)$. By symmetry we may assume $\gamma \in \B_\ext(\Lambdapmp;\psi)$, so there is a lift $\ti \gamma$ with endpoints $P,Q \in  \bdy_\ext(A,\psi)$ for some $A \in \A_{\na}(\Lambdapmp)$. 

\textbf{Case 2a:} At least one of $P$ or $Q$ is not in $\bdy A$, say $P \not\in \bdy A$. It follows that $P \in \Fix_N(\wh\Psi) \setminus \bdy A$ for some $A$-related $\Psi \in P(\psi)$. Applying Corollary~\ref{CorRelatedBdy} it follows that $P$ is an isolated attracting point of $\Fix_N(\wh\Psi)$. Since $P \in \bdy_\ext(A,\psi)$, by Corollary~\ref{CorDisjBdyA} and Lemma~\ref{LemmaTwoLifts} it follows that $\Phi = \Psi^{-1}$ is the only automorphism representing $\phi$ with $P \in \Fix(\wh\Phi) = \Fix(\wh\Psi)$. Since $P$ is a repeller for the action of $\wh\Phi$, we have $P \not \in \Fix_N(\wh\Phi)$, and so $\gamma \not \in \B_\sing(\phi)$ and $\gamma \not \in\B_\ext(\Lambda^\pm_\phi;\phi)$. By $(*)$ it follows that $\gamma \in \B_\gen(\phi)$, reducing to Case~1.

\textbf{Case 2b:} $P,Q \in \bdy A$, and so $\gamma$ is carried by $\A_{\na}(\Lambdapmp)$ and we are done.   

\medskip

Having settled Cases~1 and~2, we are reduced to the following:

\medskip

\textbf{Case 3:} $\gamma \in \B_\sing(\phi) \union \B_\sing(\psi)$. By symmetry we may assume $\gamma \in \B_\sing(\phi)$, and so there exists $\Phi \in P(\phi)$ and a lift $\ti \gamma$ with endpoints $P,Q \in \Fix_N(\wh\Phi)$. 

\textbf{Case 3a: $\Fix(\Phi)$ is nontrivial.} By Corollary~\ref{CorRelatedBdy} there exists $A \in \A_\na(\Lambdapmp)$ such that $\Fix(\Phi) \subgroup A$, and so $\Phi$ is $A$-related and $\gamma \in \B_\ext(A,\phi) \subset \B_\ext(\Lambdapmp;\phi)$, reducing to Case~2.

\textbf{Case 3b: $\Fix(\Phi)$ is trivial.} It follows that $P,Q$ are isolated attractors in $\Fix_N(\wh\Phi)$. Lemma~\ref{LemmaTwoLifts} combined with the assumption of Case 3b implies that $\Psi=\Phi^\inv$ is the only automorphism representing $\psi$ with $P \in \Fix(\wh\Psi)$. As in Case 2a, using $(*)$ we conclude that $\gamma \in \B_\gen(\psi)$, reducing to Case~1. 

\medskip

This completes the proof of (1). 

\medskip
 
We prove (2) by contradiction.  If  (2)  fails then there are neighborhoods $V^+,V^-$ of generic leaves of $\Lambdapp,\Lambda^-_\phi$ respectively, a sequence of lines $\gamma_i \in \B$ and a sequence of positive integers $m_i \to \infinity$, such that for all $i$ we have: $\gamma_i \not\in V^-$; $\phi^{2m_i}(\gamma_i) \not\in V^+$; and $\gamma_i$ is not carried by $\A_\na(\Lambda^\pm_\phi)$. We may assume that $V_+$ has the property $\phi(V_+) \subset V_+$, because generic leaves of $\Lambdapp$ have a neighborhood basis of such sets. Similarly, we may assume that $V_- \subset \phi(V_-)$.  Since $\A_\na(\Lambda^\pm_\phi)$ is $\phi$-invariant, none of the lines $\phi^{m_i}(\gamma_i)$ are carried by $\A_\na(\Lambda^\pm_\phi)$. Since $\A_\na(\Lambda^\pm_\phi)$ is malnormal we may apply Lemma~\ref{ItemNotZPLimit}, with the conclusion that after passing to a subsequence of $\phi^{m_i}(\gamma_i)$, some weak limit $\gamma$ is not carried by $\A_\na(\Lambda^\pm_\phi)$. 
  
To contradict (1) we show that $\gamma$ is weakly attracted to neither $\Lambdapp$ nor $\Lambda^-_\phi$. By symmetry we need show only that the sequence $\phi^m(\gamma)$ does not weakly converge to $\Lambdapp$. If it does then $\phi^M(\gamma) \in V^+$ for some~$M$. Since $V^+$ is open there exists $I$ such that $\phi^{m_i +M}(\gamma_i) \in V^+$ for all $i \ge I$. Since $\phi(V^+) \subset V^+$, it follows that $\phi^m(\gamma_i) \in V^+$ for all $m \ge m_i + M$ and $i \ge I$. We can choose $i \ge I$ so that $m_i \ge M$, and it follows that $\phi^{2m_i}(\gamma_i) \in V^+$, a contradiction. 
\end{proof}

\subsection{Nonattracted lines of \eg\ height: the nongeometric case.}
\label{SectionNonattrFullHeight}

We continue the \emph{Notational Conventions} established at the beginning of Section~\ref{SectionTheoremGStatement}. By combining Proposition~\refGM{PropGeomEquiv} with the following equations of free factor systems 
$$[G_r]=\F_\supp(\Lambda^\pm_\phi)=[G'_u], \qquad [G_{r-1}]=[G'_{u-1}]
$$
we may conclude that the stratum $H_r$ is geometric if and only if the stratum $H'_u$ is geometric. The realizations of a line $\gamma \in \B$ in the marked graphs $G,G'$ will be denoted $\gamma_G,\gamma_{G'}$ respectively, or just as $\gamma,\gamma'$ when we wish to abbreviate the notation, or even both just as $\gamma$ when we wish for further abbreviation.

The heart of Theorem~\ref{ThmRevisedWAT} (Theorem~G) is the special case concerned with those lines $\gamma$ such that $\gamma_G$ has height~$r$, equivalently $\gamma_{G'}$ has height~$u$, and in this section we focus on that case. We give necessary and sufficient conditions for $\gamma$ to be weakly attracted to $\Lambda^+_\phi$ under iteration of $\phi$, expressed in terms of the form of $\gamma_{G'}$. This is the analog of Proposition~6.0.8 of \BookOne\ which has the additional hypothesis that $\gamma$ is birecurrent, and the proof of which is separated into geometric and non-geometric cases. In our present setting we drop the birecurrence hypothesis, and we also separate the proof into the non-geometric case in Lemma~\ref{nonGeometricFullHeightCase} and the geometric case in Lemma~\ref{geometricFullHeightCase}. The conclusions of two lemmas describe $\gamma_{G'}$ in explicit detail which, while more than we need for our applications, is included because it is needed for the proof and it helps clarify the picture. Although in this section we do not yet derive the conclusions of Theorem~\ref{ThmRevisedWAT} (Theorem~G) for the height~$r$ case, that will be done as part of the derivation of those conclusions for the general case, carried out in Section~\ref{SectionNALinesGeneral}.

The nongeometric case, in which the strata $H_r,H'_u$ are not geometric, is entirely handled in the following Lemma~\ref{nonGeometricFullHeightCase}. The geometric case will take considerably more work and is handled in Section~\ref{SectionNFHGeometric}. 

\begin{lemma}
 \label{nonGeometricFullHeightCase} Assuming that the strata $H_r$, $H'_u$ are not geometric, and with the notation above, if $\gamma \in \B$ has realization $\gamma_G$ in $G$ of height $r$ and $\gamma_{G'}$ in $G'$ of height~$u$, and if $\gamma$ is not weakly attracted to $\Lambda^+_\phi$, then its realization $\gamma_{G'}$, satisfies at least one of the following:
\begin{enumerate}
\item \label{ItemNGFHCgeneric}
$\gamma_{G'}$ is a generic leaf of $\Lambda^-_\phi$.
\item \label{ItemIntermediatePath}
$\gamma_{G'}$ decomposes as $\overline R_1 \mu R_2$ where $R_1$ and $R_2$ are principal rays for $\Lambda^-_\phi$ and $\mu$ is either the trivial path or a nontrivial path of one of the forms $\alpha$, $\beta$, $\alpha\beta$, $\alpha\beta\bar\alpha$, such that $\beta$ is a nontrivial path of height $<s$ and $\alpha$ is a height $s$ indivisible Nielsen path. 
\item \label{ItemNGFHCOneRay}
$\gamma_{G'}$ or $\gamma_{G'}{}^\inv$ decomposes as $\overline R_1 \mu R_2$ where $R_1$ is a principal ray for $\Lambda^-_\phi$, $R_2$ is a ray of height $<s$, and $\mu$ is either trivial or a height $s$ Nielsen path. 
\end{enumerate}
\end{lemma}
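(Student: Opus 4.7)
The plan is to work in $G'$, where the EG stratum $H'_u$ represents $\Lambda^-_\phi = \Lambda^+_\psi$, and to extract the structural decomposition of $\gamma_{G'}$ from the CT theory of $f' \from G' \to G'$. First I observe that each of the three conclusions is invariant under replacing $\gamma$ by $\psi^k(\gamma)$: generic leaves of $\Lambda^-_\phi$ are preserved as a set by $f'_\#$, height-$u$ indivisible Nielsen paths are $f'_\#$-invariant, and principal rays for $\Lambda^-_\phi$ map to principal rays. The hypothesis that $\gamma$ is not weakly attracted to $\Lambda^+_\phi$ under $\phi$ is also $\psi$-iterate invariant. So by Fact~\refGM{FactEvComplSplit} I may pass to an iterate and assume $\gamma_{G'}$ has a complete splitting in $G'$.

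Next I analyze the height-$u$ illegal turns in $\gamma_{G'}$ following the strategy of Lemma~\ref{one illegal turn} and Lemma~4.2.6 of \BookOne. After a further iterate, each remaining height-$u$ illegal turn lies inside a copy of the unique height-$u$ indivisible Nielsen path $\rho_u$, if any exists; otherwise $\gamma_{G'}$ is $u$-legal. In either case the complete splitting refines a decomposition at the height-$u$ illegal turns, presenting $\gamma_{G'}$ as an alternation of maximal $u$-legal pieces separated by $\rho_u$-copies, with the two end pieces possibly infinite rays. Each infinite $u$-legal ray occurring at an end is identified as a principal ray for $\Lambda^-_\phi$ using Definition~\refGM{DefSingularRay} and the fact that its $f'_\#$-iterates contain arbitrarily long legal $H'_u$-tiles.

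The critical step is to bound the interior $u$-legal structure of $\gamma_{G'}$. I plan to argue by contradiction: if $\gamma_{G'}$ contained an interior $u$-legal subpath whose $H'_u$-edge length grew unboundedly across $\psi$-iterates, then the identification $[G_r] = [G'_u]$ together with the change of marking from $G'$ to $G$ would yield an interior subpath of $\gamma_G$ whose $\phi$-iterates contain arbitrarily long legal $H_r$-tiles, forcing $\gamma$ to be weakly attracted to $\Lambda^+_\phi$ under $\phi$ and contradicting the hypothesis. Once the interior is bounded, a finite case analysis of how $\rho_u$-copies, a height-$<s$ path, and a height-$s$ indivisible Nielsen path can be concatenated between the two ends yields case (1) when the concatenation is smooth and $\gamma_{G'}$ is a principal line (hence a generic leaf of $\Lambda^-_\phi$ by Fact~\refGM{FactTwoEndsGeneric}), case (2) with the listed forms of $\mu$, and case (3) when only one end is a principal ray and the other is a ray of height $<s$ attached by a trivial or height-$s$ Nielsen middle.

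The main obstacle will be the precise execution of the interior-boundedness argument: translating the $\phi$-iteration condition (the non-attraction hypothesis, naturally stated via $G$) into a constraint on the $f'_\#$-behavior of $\gamma_{G'}$ in $G'$ requires care, since the two \cts\ are not directly comparable. The cleanest route is probably to work with the complete splittings of $\gamma_{G'}$ and its $f'_\#$-iterates, and to combine Lemma~\ref{LemmaZPClosed} with the description of the nonattracting subgraph $Z'$ associated to $H'_u$, in order to show that any unbounded interior height-$u$ legal content directly produces height-$r$ legal content in some iterate of $\gamma_G$.
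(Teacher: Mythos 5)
Your plan diverges from the paper's argument and has two substantive gaps.

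First, the initial reduction is not valid. You appeal to Fact~\refGM{FactEvComplSplit} to ``pass to an iterate and assume $\gamma_{G'}$ has a complete splitting,'' but that fact applies to \emph{finite} paths: a given finite path has some iterate that is completely split. There is no analogous statement that a bi-infinite line becomes completely split after finitely many iterations of $f'_\#$, and no such assumption is available here. The paper instead normalizes finite objects: it reduces to controlling the auxiliary lines/circuits $\tau'_i$ (see below) and applies Lemma~\refGM{LemmaEGUnifPathSplitting} to split \emph{them} uniformly, not $\gamma_{G'}$ itself.

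Second, and more seriously, your proposed interior-boundedness contradiction runs in the wrong direction. A long interior $u$-legal subpath of $\gamma_{G'}$ is, by definition, a long leaf segment of $\Lambda^+_\psi = \Lambda^-_\phi$; such content \emph{grows} under $\psi$-iteration (i.e.\ under $f'_\#$), and \emph{shrinks} under $\phi$-iteration. The claim that unbounded interior $u$-legal content under $\psi$-iterates ``would yield an interior subpath of $\gamma_G$ whose $\phi$-iterates contain arbitrarily long legal $H_r$-tiles'' conflates the attracting lamination of $\phi$ with the attracting lamination of $\psi$: $r$-legal structure in $G$ tracks $\Lambda^+_\phi$, $u$-legal structure in $G'$ tracks $\Lambda^-_\phi$, and these are dual, not comparable in the way your argument needs. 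Indeed, a line satisfying the hypotheses \emph{is} expected to be attracted to $\Lambda^-_\phi$ under $\psi$ (that is essentially the content of the lemma -- the end rays are principal rays of $\Lambda^-_\phi$), so finding large pieces of $\Lambda^-_\phi$ inside $\gamma_{G'}$ cannot be a contradiction.

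What your outline is missing is the mechanism that actually exploits non-attraction to $\Lambda^+_\phi$. The paper's proof first shows $\gamma_G$ crosses infinitely many $H_r$-edges (by showing that a height-$r$ line with finitely many $H_r$-edges \emph{is} attracted to $\Lambda^+_\phi$), and then invokes the part of the proof of Proposition~6.0.8 of \BookOne\ that does not use birecurrence: there is a uniform bound $M'$ such that each finite subpath $\gamma'_i$ of $\gamma_{G'}$ occurs inside some $g^{k_i}_\#(\tau'_i)$, where $\tau'_i$ is a line or circuit in $G'$ crossing at most $M'$ edges of $H'_u$. This ``bounded-complexity pullback'' structure -- a consequence of non-attraction to $\Lambda^+_\phi$ under $\phi$-iteration, transferred to $G'$ by bounded cancellation -- is the engine of the argument. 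One then splits each $\tau'_i$ uniformly, passes to a subsequence to stabilize the combinatorics, and reads off cases (1)--(3) from the possible patterns of end rays, Nielsen paths, and low-height pieces. Without some substitute for this bounded-pullback statement, the case analysis you sketch cannot get started.
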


\begin{proof} By Proposition~\refGM{PropGeomEquiv}, since $H_r$ is not a geometric stratum, neither is $H'_u$. Let $\alpha$ denote the unique (up to reversal) indivisible Nielsen path of height $s$ in $G'_u$, if it exists; by Fact~\refGM{FactGeometricCharacterization} and Fact~\trefGM{FactEGNielsenCrossings}{ItemEGNielsenNotClosed} it follows that $\alpha$ is not closed and that we may orient $\alpha$ so that its initial endpoint $v$ is an interior point of $H'_u$.

We adopt the abbreviated notation $\gamma$ for $\gamma_G$ and $\gamma'$ for $\gamma_{G'}$.

We first show that $\gamma$ has infinitely many edges in $H_r$ by proving that if a line $\gamma$ has height $r$ and only finitely many edges in $H_r$ then $\gamma$ is weakly attracted to $\Lambda^+$. 
To prove this, write $\gamma = \gamma_- \gamma_0 \gamma_+$ where $\gamma_-,\gamma_+ \subset G_{r-1}$ and $\gamma_0$ is a finite path whose first and last edges are in $H_r$. Since $f_\#$ restricts to a bijection on lines of height $r-1$, it follows that $f^k_\#(\gamma)$ has height $r$ for all $k$, and so $f^k_\#(\gamma_0)$ is a nontrivial path of height $r$ for all $k$. By Fact~\refGM{FactEvComplSplit} there exists $K \ge 0$ such that $f^K_\#(\gamma_0)$ completely splits into terms each of which is either an edge or Nielsen path of height $r$ or a path in $G_{r-1}$, with at least one term of height $r$. Let $f^K_\#(\gamma_0) = \gamma'_- \gamma'_0 \gamma'_+$ where $\gamma'_-,\gamma'_+$ are in $G_{r-1}$ and $\gamma'_0$ is the maximal subpath of $f^K_\#(\gamma_0)$ whose first and last edges are in $H_r$, so $\gamma'_0$ is nontrivial and completely split. Since $f^K_\#(\gamma) = [f^K_\#(\gamma_-) \, \gamma'_- \, \gamma'_0 \, \gamma'_+ \, f^K_\#(\gamma_+)]$ then, using RTT-(i), it follows that there is a splitting $f^K_\#(\gamma) = \gamma''_- \cdot \gamma'_0 \cdot \gamma''_+$ where $\gamma''_- = [f^K_\#(\gamma_-) \, \gamma'_-]$ and $\gamma''_+= [\gamma'_+ \, f^K_\#(\gamma_+)]$ are in $G_{r-1}$. By Fact~\trefGM{FactEGNielsenCrossings}{ItemEGNielsenNotClosed}, each term in this splitting of $f^K_\#(\gamma)$ which is an indivisible Nielsen path of height $r$ is adjacent to a term that is an edge in $H_r$. It follows that at least one term in the splitting of $f^K_\#(\gamma)$ is an edge in $H_r$, implying that $\gamma$ is weakly attracted to $\Lambda^+$. 

Since $\gamma$ contains infinitely many edges in $H_r$, and since $[G_r]=[G'_u]$ and the graphs $G_r, G'_u$ are both core subgraphs, the line $\gamma_{G'}$ contains infinitely many edges in $H'_u$. 

In the part of the proof of the nongeometric case of Proposition~6.0.8 of \BookOne\ that does not use birecurrence and so is true in our context, it is shown that there exists $M' > 0$ so that for every finite subpath $\gamma_i'$ of $\gamma'$ there exists a line or circuit $\tau_i'$ in $G'$ that contains at most $M'$ edges of $H'_{s}$ such that $\gamma_i'$ is a subpath of $g_\#^{k_i}(\tau_i')$ for some $k_i\ge 0$. If $G_{r-1} = \emptyset$ then $\tau_i'$ is a circuit; otherwise $\tau_i'$ is a line. (This is proved in two parts. First, in what is called step 2 of that proof, an analogous result is proved in $G$. Then the bounded cancellation lemma is used to transfer this result to $G'$; the case that $G_{r-1} = \emptyset$ is considered after the case that $G_{r-1} \ne \emptyset$.)

Choose a sequence of finite subpaths $\gamma'_i$ of $\gamma'$ that exhaust $\gamma'$ and let $\tau'_i$ and $k_i$ be as above so that $\gamma'_i$ is a subpath of $g^{k_i}_\#(\tau'_i)$ and so that $\tau'_i$ contains at most $M'$ edges of~$H'_u$. Since $\gamma'$ contains infinitely many $H'_u$ edges, we have $k_i \to +\infinity$ as $i \to +\infinity$. 

By Lemma~\refGM{LemmaEGUnifPathSplitting} there exists $d>0$ depending only on the bound $M'$ such that $g^d_\#(\tau'_i)$ has a splitting into terms each of which is either an edge or indivisible Nielsen path of height $s$ or a path in $G'_{u-1}$. By taking $i$ so large that $k_i \ge d$ we may replace each $\tau'_i$ by $g^d_\#(\tau'_i)$ and each $k_i$ by $k_i - d$, and hence we may assume that $\tau_i'$ has a splitting 
$$ \tau_i' = \tau'_{i,1} \cdot \ldots \cdot \tau'_{i,l_i}  \qquad\qquad (*)
$$
each of whose terms is an edge or Nielsen path of height $s$ or a path in $G'_{u-1}$. The number of edges that $\tau_i'$ has in $H_u$ is still uniformly bounded, and so $l_i$ is uniformly bounded. Passing to a subsequence, we may assume that $l_i$ and the ordered sequence of height $s$ terms in $\tau'_i$ are independent of~$i$. We may also assume that $l = l_i$ is minimal among all such choices of $\gamma_i'$ and $\tau_i'$.

\subparagraph{Case A: $l = 1$.}   In this case $\tau'_i = E$ is a single edge of $H'_u$ and so, by \refGM{FactLeafAsLimit}, $\gamma'$ is a leaf of $\Lambda^-_\phi$. If both ends of $\gamma'$ have height $s$ then $\gamma'$ is generic by Fact~\refGM{FactTwoEndsGeneric} and case (1) is satisfied. 

Suppose one end of $\gamma'$, say the positive end, has height $\le s-1$. We have a concatenation $\gamma' = \overline R_1 R_2$ where the ray $R_1$ starts with an edge of $H'_u$ and the ray $R_2$ is contained in $G'_{u-1}$. By Fact~\refGM{FactPrincipalVertices}, the concatenation point is a principal vertex. By Lemma~\trefGM{FactTiles}{ItemUndergraphPieces}, for each $m$ there is an $m$-tile in $\gamma'$ which is an initial segment of $R_1$. By Corollary~\refGM{CorTilesExhaustRay} it follows that $R_1$ is a principal ray. This shows that (3) is satisfied with trivial $\mu$.

\subparagraph{Case B: $l \ge 2$.} Choose a subpath $\nu_i' \subset \tau_i'$, with endpoints not necessarily at vertices, such that $g^{k_i}_\#(\nu_i') = \gamma_i'$. Let $\tau''_i$ be the subpath obtained from $\tau'_i$ by removing the initial segment $\tau'_{i,1}$ and the terminal segment $\tau'_{i,l}$ of the splitting $(*)$, so either $\tau''_i = \tau'_{i,2} \cdot \ldots \cdot \tau'_{i,l-1}$ or, when $l=2$, $\tau''_i$ is the trivial path at the common vertex along which $\tau'_{i,1}$ and $\tau'_{i,2}$ are concatenated. After passing to a subsequence, we may assume that $\tau''_i \subset \nu_i'$; if no such subsequence existed then we could reduce~$l$ by removing either $\tau'_{i,1}$ or $\tau'_{i,l}$ from $\tau'_i$. For the same reason, we may assume that $\gamma'$ has a finite subpath that contains $g^{k_i}_\#(\tau''_i)$ for all $i$. After passing to a subsequence, we may assume that $\mu = g^{k_i}_\#(\tau''_i)$ is independent of $i$. Since the sequence of height $s$ terms in $\tau''_i$ is independent of $i$, it follows that $\mu$ is either trivial or has a splitting into terms each of which is either $\alpha$, or $\bar\alpha$, or a path in $G'_{u-1}$. Since the endpoints of $\alpha$ are distinct, no two adjacent terms in this splitting can both be $\alpha$ or $\bar\alpha$, and so each subdivision point of the splitting is in $G'_{u-1}$. Since $v$ is an interior point of $H'_u$, for any occurence of $\alpha$ or $\bar\alpha$ as a term of $\mu$ the endpoint $v$ must be an endpoint of $\eta$. It follows that $\mu$ can be written in one of the forms given in item~\pref{ItemIntermediatePath}, after possibly inverting~$\gamma'$.

Write $\gamma'$ as $\overline R_1 \mu R_2$. If $\tau'_{1,1}$ is an edge $E$ in $H'_u$ then $E = \tau'_{i,1}$ for all $i$, and the ray $R_1$ is the increasing union of $g^{k_1}_\#(\bar E) \subset g^{k_2}_\#(\bar E) \subset \cdots$, so $R_1$ is a principal ray for $\Lambda^-_\phi$. Otherwise $\tau'_{i,1}$ is a path in $G'_{u-1}$ for all $i$ and $R_1$ is a ray in $G'_{u-1}$. Using $\tau'_{i,l}$ similarly in place of $\tau'_{i,1}$, $R_2$ is either a principal ray for $\Lambda^-_\phi$ or a ray in $G'_{u-1}$. At least one of $R_1$ and $R_2$ is a principal ray. If they are both principal rays then item~(2) holds, otherwise (3)~holds. 
\end{proof}

\subsection{Nonattracted lines of \eg\ height: the geometric case.} 
\label{SectionNFHGeometric}

We continue to analyze the special case of Theorem~\ref{ThmRevisedWAT} (Theorem~G) concerned with lines of maximal height and a top \eg-stratum. We state and prove Lemma~\ref{geometricFullHeightCase}, which covers the case of a geometric top stratum. Our proof applies also to Proposition~\ref{PropWeakGeomRelFullIrr}, which in \PartFour\ will be incorporated into the conclusions of Theorem~J.

The reader may wish to review the notations established in the beginning of Sections~\ref{SectionTheoremGStatement} and~\ref{SectionNonattrFullHeight}. We assume the strata $H_r$, $H'_u$ are geometric---equivalently the lamination pairs $\Lambda^\pm_\phi = \Lambda^\pm_\psi$ are geometric (Proposition~\refGM{PropGeomEquiv} and Definition~\refGM{DefGeometricLamination})---we let $\rho_r$, $\rho'_u$ be the closed indivisible Nielsen paths in $G_r$, $G'_u$ of heights $r,u$, respectively. By applying Proposition~\refGM{PropGeomEquiv}, up to reorienting these Nielsen paths we have $[\rho_r]=[\rho'_u]$.

 
\begin{lemma}[Height $r$ lines in the geometric case]
\label{geometricFullHeightCase} Assuming that $H_r$, $H'_u$ are geometric, and with notation as above, if $\gamma \in \B$ has height~$r$ in $G$ and is not weakly attracted to $\Lambdapp$ then its realization $\gamma_{G'}$ in $G'$ has at least one of the following forms:
\begin{enumerate}
\item \label{ItemGFHCIterate}
$\gamma_{G'}$ or $\bar\gamma_{G'}$ is the bi-infinite iterate of $\rho'_u$.
\item \label{ItemGFHCLeaf}
$\gamma_{G'}$ is a generic leaf of $\Lambda^-_\phi$.
\item \label{ItemGFHCTwoRays}
$\gamma_{G'}$ decomposes as $\overline R_1 \mu R_2$ where $R_1$ and $R_2$ are principal rays for $\Lambda^-_\phi$ and $\mu$ is either a trivial path, a finite iterate of $\rho'_u$ or its inverse, or a nontrivial path of height $<u$.
\item \label{ItemGFHCOneRay}
$\gamma_{G'}$ or $\bar\gamma_{G'}$ decomposes as $\overline R_1 R_2$ where $R_1$ is a principal ray for $\Lambda^-_\phi$ and the ray $R_2$ either has height $<u$ or is the singly infinite iterate or $\rho'_u$ or its inverse.
\end{enumerate}
\end{lemma}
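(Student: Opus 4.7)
The plan is to adapt the proof of Lemma~\ref{nonGeometricFullHeightCase} to the geometric setting, accommodating the closed indivisible Nielsen path $\rho'_u$. The overall skeleton is the same: (a) show $\gamma$ contains infinitely many $H_r$-edges; (b) pull finite subpaths of $\gamma'$ back by $g'$ to bounded-complexity paths $\tau'_i$; (c) apply complete splitting, pass to subsequences to stabilize the pattern of height-$u$ terms, and classify. The new ingredient throughout is that $\rho'_u$ (being a closed Nielsen path with endpoints interior to $H'_u$ by Fact~\trefGM{FactEGNielsenCrossings}{ItemEGNielsenPointInterior}) is $g'_\#$-fixed and can occur as a splitting term, giving rise to conclusion~(1) and enlarging what can appear as the middle $\mu$ and as the ends $R_1, R_2$ in conclusions~(3) and~(4).

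First I would show $\gamma$ has infinitely many $H_r$-edges by the same argument as in Lemma~\ref{nonGeometricFullHeightCase}, using that even though $\rho'_u$ is now closed, its interior-to-$H'_u$ endpoint property still forces any $\rho'_u$-run in a complete splitting of $f_\#^K(\gamma_0)$ to be flanked by $H_r$-edge terms. I would then isolate the bi-infinite iterate case: if $\gamma'$ or its inverse equals $(\rho'_u)^{\pm \infty}$ then $\gamma'$ is $g'_\#$-fixed and trivially non-attracted, yielding conclusion~(1). From here on, assume $\gamma'$ is not this bi-infinite iterate. The core step is the geometric-case analog of the path pullback from \BookOne\ Proposition~6.0.8: for each finite subpath $\gamma'_i$ exhausting $\gamma'$, produce a line or circuit $\tau'_i$ in $G'$ and $k_i\ge 0$ with $\gamma'_i \subset g^{k_i}_\#(\tau'_i)$, where the appropriate bounded complexity measure is the number of height-$u$ terms in a complete splitting of $\tau'_i$ that are $H'_u$-edges (as opposed to $(\rho'_u)^{\pm 1}$-terms). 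Having secured this bound, I would apply Lemma~\refGM{LemmaEGUnifPathSplitting} to obtain complete splittings of $g^d_\#(\tau'_i)$ whose terms are $H'_u$-edges, $(\rho'_u)^{\pm 1}$, or paths in $G'_{u-1}$; pass to a subsequence so that the ordered sequence of height-$u$ terms and the total number of terms $l$ are constant; and minimize $l$.

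For the classification, case $l=1$ forces $\tau'_i$ to be a single $H'_u$-edge $E$ (it cannot be $(\rho'_u)^{\pm 1}$, whose $g_\#^{k_i}$-image is bounded and cannot exhaust an infinite line); Fact~\refGM{FactLeafAsLimit} together with the principal-ray argument of the nongeometric $l=1$ case yields conclusion~(2), or conclusion~(4) with $\mu$ trivial and $R_2$ of height $<u$. For $l \ge 2$, the central piece $\tau''_i$ obtained by deleting the first and last splitting terms cannot contain $H'_u$-edges (whose $g_\#^{k_i}$-images would grow unboundedly, contradicting the constancy of $\mu := g_\#^{k_i}(\tau''_i)$ along the subsequence), so its splitting terms are only $(\rho'_u)^{\pm 1}$ and paths in $G'_{u-1}$; since the endpoints of $\rho'_u$ lie in the interior of $H'_u$ and hence not in $G'_{u-1}$, these two term types cannot abut at a splitting vertex, forcing $\tau''_i$ to be trivial, a concatenation of $\rho'_u$-terms (tightening to $(\rho'_u)^k$), or a single $G'_{u-1}$-path --- exactly the three alternatives for $\mu$ in conclusion~(3). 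The end rays $R_1, R_2$ come from the stabilized behavior of $g_\#^{k_i}$ applied to the leftmost and rightmost portions of $\tau'_i$: an $H'_u$-edge stabilizer yields a principal ray, a $G'_{u-1}$-path stabilizer yields a height-$<u$ ray, and an infinite trailing sequence of $\rho'_u$-terms yields a singly infinite iterate of $(\rho'_u)^{\pm 1}$; the combinations give conclusion~(3) (both ends principal) or conclusion~(4) (one end principal, the other of height $<u$ or a singly infinite $\rho'_u$-iterate).

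The main obstacle is securing the bounded-complexity pullback in step~(b). The nongeometric bound on total $H'_u$-edges of $\tau'_i$ fails here, because $(\rho'_u)^k$-terms contribute unboundedly to $H'_u$-edges, so the substitute bound must count only the genuinely non-$\rho'_u$ height-$u$ terms, and one must further ensure that when $\gamma'$ is not the bi-infinite iterate such a bound actually exists. I would expect to establish this either by a direct adaptation of the bounded cancellation argument of \BookOne, carving out $\rho'_u$-iterate blocks explicitly, or more invariantly via the geometric model of Remark~\ref{RemarkGeometricK}, where non-$\rho'_u$ crossings of $H'_u$ by $\gamma'$ correspond to essential intersections with a fixed transversal on the surface $S$ and are bounded a priori.
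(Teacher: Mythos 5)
Your proposal correctly identifies the crux of the matter in its final paragraph, but then leaves it unresolved, and that unresolved step is precisely where the paper departs from the nongeometric template entirely. The pullback-with-bounded-complexity step (your step~(b)) is cited in the paper's proof of Lemma~\ref{nonGeometricFullHeightCase} as coming specifically from ``the part of the proof of the \emph{nongeometric} case of Proposition~6.0.8 of \BookOne''; in the geometric case that source is unavailable, and no direct combinatorial substitute is given here or in \BookOne. Your suggestion to ``count only the genuinely non-$\rho'_u$ height-$u$ terms'' is the right intuition but is not established: nothing in the proposal produces the uniform bound $M'$ on non-$\rho'_u$ crossings of $H'_u$ for a non-attracted line $\gamma'$, nor the transfer of that bound through iterated pullbacks and bounded cancellation, which is the technical heart of that step. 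Without that bound the subsequence-and-stabilize argument of your step~(c) cannot begin.

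The paper instead abandons the pullback machinery and argues geometrically. After restricting to the top stratum, it invokes the geometric model for $H'_u$: the surface $S$ with pseudo-Anosov class $\theta$, its stable/unstable laminations identified with $\Lambda^-_\phi, \Lambda^+_\phi$ via Proposition~\refGM{PropGeomLams}, and the Bass--Serre tree $T$ of the peripheral splitting. It then defines an ``over--under decomposition'' of a lift $\ti\gamma$ in $\wt G'$ indexed by a path $\ti\gamma_T$ in $T$, shows that realization in $T$ is $\Aut_\psi(F_n)$-equivariant, and proves, via Nielsen--Thurston weak attraction on $S$ (Proposition~\refGM{PropNTWA}), that no proper geodesic overpath of a non-attracted line can cross the stable lamination $\Lambda^s$. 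That non-crossing property forces each overpath to be a leaf of $\Lambda^s$ or to lie in a stable principal region (in particular no overpath is a compact arc), whence $\ti\gamma_T$ has at most two $S$-vertices, necessarily at its ends. The four conclusions of the lemma are then read off directly from the three shapes of $\ti\gamma_T$ (degenerate $S$-point, one $S$-endpoint, two $S$-endpoints), together with principal-lift arguments to identify the rays as principal rays and the underpath as $\mu$. Your second suggested repair --- invoking the geometric model and transversal intersections --- is gesturing at this surface-geometric route, but as written it is a heuristic, not an argument; the actual proof requires the entire over--under apparatus, the equivariance claim, and the Nielsen--Thurston input, none of which appear in your proposal.

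Two smaller remarks. First, your $l=1$ subcase as written can only produce conclusion~(2) or conclusion~(4) with $\mu$ trivial; but in the geometric setting conclusion~(4) also allows $R_2$ to be a singly infinite iterate of $\rho'_u$, and it is unclear how your case split on the leading/trailing splitting terms of $\tau'_i$ would produce an infinite trailing $\rho'_u$-ray rather than a single bounded $\rho'_u$-block, since any fixed $\tau'_i$ has only finitely many $\rho'_u$-terms. Second, your dismissal of the case $\tau'_i = (\rho'_u)^{\pm 1}$ in $l=1$ is correct, but the same kind of argument is needed more delicately elsewhere: a line carried inside a component of $G'_{u-1}$ can have a lift whose $T$-realization is a single $L$-vertex, a degenerate case you do not treat and which is handled implicitly in the paper by the reduction to height-$r$ lines (every line under discussion contains an $H'_u$-edge).
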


Until further notice in this section, we adopt the notation and hypotheses of Lemma~\ref{geometricFullHeightCase}. For purposes of the proof, by replacing $\phi$ and $\psi=\phi^\inv$ with their restrictions to $\Out(\pi_1 G) = \Out(\pi_1 G')$ (Fact~\refGM{FactMalnormalRestriction}), we may replace $f \from G \to G$ with its restriction to $G_r$, and replace $f' \from G' \to G'$ with its restriction to $G'_u$. Hence we have reduced to the assumption that $H^{\vphantom{\prime}}_r,H'_u$ are the top strata. Under that assumption, a geometric model for $H'_u$ (Definition~\refGM{DefGeomModel}) is the same thing as a weak geometric model (Definitions~\refGM{DefWeakGeomModel}). We fix such a model, and we recall its static data; later we review its dynamic data. We adopt shorthand notation $Q = G'_{u-1}$, which has no contractible components (Fact~\refGM{FactGeometricCharacterization}).

\medskip\noindent
\textbf{Geometric model: Static data.} The static data consists of the following. First we have the finite subgraph $Q \subset G'$. We also have a compact surface $S$ with $m+1$ boundary components $\bdy S = \bdy_0 S \union \cdots \union \bdy_m S$, $m \ge 0$. The \emph{upper boundary} of $S$ is $\bdy_0 S$. The \emph{lower boundary} is $\bdy S - \bdy_0 S = \union_{i=1}^m \bdy_i S$ and is denoted $\bdy_\ell S$. We have a map $\alpha \from \bdy_\ell S \to Q$ such that for each $i=1,\ldots,m$ its restriction $\alpha_i \from \bdy_i S \to Q$ is a homotopically nontrivial closed edge path. We have a quotient 2-complex $Y$ obtained by gluing $S$ and $Q$ using the attaching map $\alpha \from \bdy_\ell S \to Q$. Let $j \from Q \disjunion S \to Y$ denote the quotient map. We also have an embedding $G' \inject Y$ extending the embedding $G'_{u-1} = Q \inject Y$. Finally, we have a deformation retraction $d \from Y \to G'$ which takes $\bdy_0 S$, regarded as a closed curve based at the unique point $p'_u = G' \intersect \bdy_0 S$, to the closed indivisible height~$u$ Nielsen path $\rho'_u$. Note that $Y$ may be regarded as a ``marked 2-complex'' for $F_n$, by composing the marking of $G'$ with the homotopy equivalence $G' \inject Y$, and so up to inner automorphism we have an identification $\pi_1(Y) \approx F_n$. Altogether the static data will be denoted $Q \disjunion S \xrightarrow{j} Y \xrightarrow{d} G'$.

\medskip\noindent
\textbf{Strategy of the proof of Lemma~\ref{geometricFullHeightCase}.} For each abstract line $\gamma$ realized in $G'$ with full height~$u$, we shall define a canonical decomposition of~$\gamma$ as an alternating concatenation of ``overpaths'' and ``underpaths''. Very roughly speaking an overpath of $\gamma$ is a subpath that begins and ends with edges of $H'_u$, that pulls back to the surface~$S$, and that is maximal with respect to these properties. An overpath of $\gamma$ can be a finite subpath, a subray, or the whole line~$\gamma$. Distinct overpaths have disjoint interiors, and the underpaths of $\gamma$ are the maximal subpaths disjoint from the interiors of the overpaths. When an overpath of $\gamma$ is pulled back to $S$ and straightened with respect to a hyperbolic structure on $S$, the result is called a ``geodesic overpath'', and this can be a finite geodesic path or geodesic ray, in either case intersecting $\bdy S$ precisely in its finite endpoints, or a bi-infinite geodesic line. From the hypothesis that $\gamma$ is not weakly attracted to $\Lambda^+_\phi$ under iteration of $\phi$, one shows that none of the geodesic overpaths of $\gamma$ are weakly attracted to unstable geodesic lamination of the pseudo-Anosov homeomorphism of the geometric model. By applying Nielsen--Thurston theory (Proposition~\refGM{PropNTWA}) one shows that these geodesic overpaths all have a certain form, the key feature being that they do not cross the stable geodesic lamination, from which the desired form of $\gamma$ given in Lemma~\ref{geometricFullHeightCase} is deduced.

Formalizing this strategy requires a lot of descriptive work, leading up to the statement of Lemma~\ref{LemmaOverpathsAttraction} which gives the form of the ``geodesic overpaths'' alluded to above. After Lemma~\ref{LemmaOverpathsAttraction} is stated and proved, we will apply it complete the proof of Lemma~\ref{geometricFullHeightCase}. 

In order to describe overpath--underpath decompositions we use the peripheral Bass-Serre tree $F_n \act T$ associated to a geometric model as a bookkeeping device. We turn next to a review of these topics from \PartOne.

\medskip\noindent
\textbf{Vertex spaces and edge spaces.} The peripheral Bass-Serre tree will be described in terms of the vertex space---edge space decomposition of the universal cover of the geometric model $Y$. Our description roughly follows Definition~\refGM{DefPeripheralSplitting}, with similar justifications using the ``graph of spaces'' approach to Bass-Serre theory in \cite{ScottWall}.

In addition to the notation $Q = G'_{u-1}$, we adopt the shorthand notation $B = \bdy_\ell S$.

Consider the following pushout diagram (with the square on the right added for convenience):
$$\xymatrix{
\breve S \disjunion \breve Q \ar@{=}[r] \ar[dr]_{\breve q} & \breve Y \ar[r]^{\breve j}  
& \wt Y \ar[d]_{q} \ar[r]^{\tilde d}_{\supset} & \wt G' \ar[d] \\
& S \disjunion Q \ar[r]_{j} & Y \ar[r]^{d} & G'
}$$
The map $q$ is the universal covering map, $\breve S \disjunion \breve Q = \breve Y$ is the subspace of the Cartesian product $(S \disjunion Q) \cross \wt Y$ consisting of all pairs $(x,\ti y)$ such that $j(x)=q(\ti y)$, and the maps $\breve q$ and $\breve j$ are restrictions of the two projection maps of the Cartesian product. The sets $\breve S$ and $\breve Q$ defining the disjoint union $\breve S \disjunion \breve Q$ are respectively characterized by requiring $\breve q(x) \in S$ and $\breve q(x) \in Q$. The group $F_n$ acts on $\wt Y$ by deck transformations and on $S \disjunion Q$ trivially, inducing a diagonal action on $\breve S \disjunion \breve Q=\breve Y$, such that $\breve j$ is $F_n$-equivariant and $\breve q$ is a covering map with deck transformation group $F_n$. We also define $\breve B \subset \breve S \subset \breve Y$ to be the total lift of $B=\bdy_\ell S$ via the map $\breve q$. The components of $\breve S$, $\breve Q$, and $\breve B$ are indexed as follows, together with their respective images in $\wt Y$ and stabilizer groups under the action $F_n = \pi_1(Y) \act \wt Y$:
\begin{align*}
\breve S = \union_s \breve S_s, & \qquad \wh S_s = \breve j(\breve S_s) \subset \wt Y, \qquad \Gamma_s = \Stab(\breve S_s) = \Stab(\wh S_s) \subgroup F_n \\
\breve Q = \union_q \breve Q_q,  & \qquad \wh Q_q = \breve j(\breve Q_q) \subset \wt Y, \qquad \Gamma_q = \Stab(\breve Q_q) = \Stab(\wh Q_q) \subgroup F_n \\
\breve B = \union_b \breve B_b, & \qquad \wh B_b = \breve j(\breve B_b) \subset \wt Y, \qquad \Gamma_b = \Stab(\breve B_b) = \Stab(\wh B_b) \subgroup F_n
\end{align*}
The sets $\wh S_s$ are called the \emph{$S$-vertex spaces} of $\wt Y$, the sets $\wh Q_q$ are the \emph{$Q$-vertex spaces}, and the sets $\wh B_b$ are the \emph{edge spaces}. 

By restricting $\breve q$ we get universal covering maps $\breve S_s \to S$, and an isomorphism of the deck group $\Gamma_s \approx \pi_1(S)$ (well-defined up to inner automorphism). Similarly we have universal covering maps of each $\breve Q_q$ over some component $Q_q$ of $Q$ with deck group $\Gamma_q \approx \pi_1(Q_q)$; and of each $\breve B_b$ over some component $B_b$ of $B$ with infinite cyclic deck group $\Gamma_b \approx \pi_1(B_b)$. Since $\breve S_s$ is connected and is cocompact under the action of $\Gamma_s$, the same is true of $\wh S_s$; similar statements hold for the actions of $\Gamma_q$ and $\Gamma_b$. 

The domain and range restrictions of $\breve j$ are also denoted with subscripts. The restriction $\breve j_q \from \breve Q_q \to \wh Q_q$ is a homeomorphism. The union $\wh Q = \union_q \wh Q_q$ is a disjoint union, it is the component decomposition of $\wh Q$, and $\wh Q$ is equal to the total lift $\wt G'_{u-1}$ of $Q=G'_{u-1}$ under the universal covering map $q \from \wt Y \to Y$. On the other hand the restrictions $\breve j_s \from \breve S_s \to \wh S_s$ and $\breve j_b \from \breve B_b \to \wh B_b$ may fail to be be injective or even locally injective, and the unions $\union_s \wh S_s$ and $\union_b \wh B_b$ need not be disjoint unions. This failure stems from the failure of local injectivity of the attaching map $\alpha \from B \to Q$. Nontheless map $\alpha$ factors on each component of $B$ as a finite sequence of Stallings folds followed by a local injection, which lifts to equivariant factorizations of each $\breve j_s$ and each $\breve j_b$, each term of which is a homotopy equivalence, and so each $\breve j_s$ and $\breve j_b$ is a homotopy equivalence. In particular each $\wh B_b$ and each $\wh S_s$ is contractible. 

For each $s$ the full lift of the lower boundary $\bdy_\ell S = B$ to $\breve S_s$ is denoted $\bdy_\ell \breve S_s$, and we have a component decomposition $\bdy_\ell \breve S_s = \union^s \breve B_b$ where the symbol $\union^s$ means that the union is taken over all $b$ such that $\breve B_b \subset \bdy_\ell \breve S_s$. We denote $\bdy_\ell \wh S_s = \breve j_s(\bdy_\ell \breve S_s) \subset \wh S_s$. We also denote $\bdy_0 \breve S_s = \bdy \breve S_s - \bdy_\ell \breve S_s$ which is the total lift to $\breve S_s$ of the upper boundary $\bdy_0 S$, and we denote $\bdy_0 \wh S_s = \breve j_s(\bdy_0 \breve S_s)$. Using the fact that the map $j \from S \to Y$ embeds $S - \bdy_\ell S$ as an open connected subset of $Y$, similarly $\breve j_s$ embeds $\breve S_s  - \bdy_\ell \breve S_s$ as an open connected subset of $\wt Y$, we have $\breve j_s(\breve S_s  - \bdy_\ell \breve S_s) \subset \wh S_s$, and we have $\breve j_s(\breve S_s  - \bdy_\ell \breve S_s)  = \wh S_s - \bdy_\ell \wh S_s$. Furthermore, for any point pair $x \ne y \in \breve S_s$ such that $\breve j_s(x)=\breve j_s(y)$ there exists a component $\breve B_b$ of $\bdy_\ell S_s$ such that $x,y \in \breve B_b$. We therefore have a component decomposition $\bdy_\ell \wh S_s = \union^s \wh B_b = \union^s \breve j_s(\breve B_b)$. 

The embedding $G' \subset Y$ and deformation retraction $d \from Y \to G'$ lift to an $F_n$-equivariant embedding $\wt G' \subset \wt Y$ and deformation retraction $\ti d \from \wt Y \to \wt G'$, commuting with universal covering maps $q \from \wt Y \to Y$ and $\wt G' \to G'$, as shown in the right square of the above diagram. Denoting
$$\wt G'_s = \wt G' \intersect \wh S_s = \ti d(\wh S_s)
$$
we may restrict $\ti d$ to obtain a $\Gamma_s$-equivariant deformation retraction
$$\ti d_s \from \wh S_s \to \wt G'_s
$$
Note that $\wt G'_s$ is connected since $\wh S_s$ is connected, and $\Gamma_s$ acts cocompactly on the tree $\wt G'_s$ since it acts cocompactly on $\wh S_s$. It follows that we may naturally identify 
$$\bdy\Gamma_s = \bdy\wh S_s = \bdy\wt G'_s \subset \bdy F_n
$$
from which it follows in turn that each line in $\wt G'$ with ideal endpoints in $\bdy \Gamma_s$ is contained in the subgraph $\wt G'_s$. Denote 
$$\wt H'_u = \wt G' \setminus \wh Q = \bigl(\text{the full lift of $H'_u = G' \setminus Q$}\bigr)
$$
Let~$\wt H'_{u,s} \subset \wt H'_u$ be the subgraph of all edges of $\wt H'_u \intersect \wt G'_s$ (the latter intersection may contain some isolated vertices which we avoid by defining $\wt H'_{u,s}$ in this manner). Note that the components of the subgraph $\wt G'_s \setminus \wt H'_{u,s}$ are precisely the components $\wh B_b$ of $\bdy_\ell \wh S_s$, one for each component $\breve B_b$ of~$\bdy_\ell \breve S_s$. 

\medskip\noindent
\textbf{The Bass-Serre tree $F_n \act T$.} The Bass-Serre tree $T$ is a bipartite tree with vertices and edges as follows. First, $T$ has one $S$-vertex denoted $V_s$ for each $S$-vertex space $\wh S_s$. Also, $T$ has one $Q$-vertex denoted $V_q$ for each $Q$-vertex space $\wh Q_q$. Finally, $T$ has one edge denoted $E_b$ for each edge space $\wh B_b$, and the endpoints of $E_b$ are the unique $S$-vertex $V_s$ and the unique $Q$-vertex $V_q$ having the properties $\wh B_b \subset \wh S_s$ and $\wh B_b \subset \wh Q_q$. The action $F_n \act \wt Y$ induces the action $F_n \act T$. 

Note that $T$ can be characterized algebraically. The conjugacy class $[\pi_1 S]$ equals the set $\{\Gamma_s\}$ of $S$-vertex stabilizers, and the latter corresponds bijectively to $\{V_s\}$ since $\pi_1 S$ is its own normalizer in $F_n$ (Lemma~\trefGM{LemmaLImmersed}{ItemSeparationOfSAndL}). Also, the union of the conjugacy classes constituting the subgroup system $[\pi_1 Q]$ equals the set $\{\Gamma_q\}$ which corresponds bijectively to $\{V_q\}$ since $[\pi_1 Q]$ is a  malnormal subgroup system (Lemma~\trefGM{LemmaLImmersed}{ItemComplementMalnormal}). The tree $T$ thus has one $S$-vertex $V_s$ for each $\Gamma_s$, one $Q$-vertex $V_q$ for each $\Gamma_q$, with an edge $E_b$ connecting $V_s$ to $V_q$ if and only if $\Gamma_s \intersect \Gamma_q$ is a nontrivial subgroup of~$F_n$, that subgroup being the infinite cyclic subgroup $\Gamma_b$. 

\textbf{Remark.} Our notation here may be compared with the notation of Definition~\refGM{DefPeripheralSplitting} by setting $L = Q \union \bdy_0 S$. In effect, in forming $T$ we have stripped away the valence~1 vertices and incident edges of the Bass-Serre tree of Definition~\refGM{DefPeripheralSplitting} that are associated to the components of the top boundaries $\bdy_0 \breve S_s$ (see Remark~\refGM{RemarkNotMinimalReFree}). Other valence~$1$ vertices may remain in $T$, namely those associated to ``free lower boundary circles'' of $S$ (Section~\refGM{SectionFreeBoundaryInvariant}).

\medskip\noindent
\textbf{Lines realized in $T$, and over--under decompositions.} Given a line $\ti\gamma \subset \wt G'$, we define its realization $\ti\gamma_T \subset T$, and in parallel we define the over--under decomposition of $\ti\gamma$ in~$\wt G'$. 

In the degenerate case that $\ti\gamma \subset \wh Q$, we have $\ti\gamma \subset \wt Q_q$ for some $q$, in which case the line $\ti\gamma_T$ degenerates to the $Q$-vertex $V_q$, and the entire path $\ti\gamma$ consists of a single underpath $\ti\gamma_q = \ti\gamma$.

Henceforth we may assume $\ti\gamma \not\subset \wh Q$, equivalently $\ti\gamma$ contains an edge of $\wt H'_{u,s}$ for some $s$. 

Next we define the $S$-vertices in $\ti\gamma_T$ and their associated overpaths in $\ti\gamma$. We put the $S$-vertex $V_s \in T$ in the line $\ti\gamma_T$ if and only if $\ti\gamma \intersect \wt H'_{u,s}$ contains an edge, equivalently $\ti\gamma \intersect \interior(\wh S_s) \ne \emptyset$. If $V_s \in \ti\gamma_T$ then the associated overpath denoted $\ti\gamma_s \subset \ti\gamma$ is defined to be the longest subpath of $\ti\gamma$ having the property that each ideal endpoint of $\ti\gamma_s$ is in $\bdy\Gamma_s$ and the edge of $\ti\gamma_s$ incident to each finite endpoint of $\ti\gamma_s$ is in $\wt H'_{u,s}$. Note that $\ti\gamma_s \subset \wt G'_s$. Note also that distinct overpaths have disjoint interiors, because for any overpaths  $\ti\gamma_s, \ti\gamma_{s'} \subset \ti\gamma$ with $s \ne s'$ their intersection $\ti\gamma_s \intersect \ti\gamma_{s'}$ is clearly a path in $\wt G'_s \intersect \wt G'_{s'} \subset \wh Q$, \emph{and} this path is either empty or a common endpoint. If this were not true then: if $\ti\gamma_s = \ti\gamma_{s'}$ then this path has an edge in $\wt H'_u$, contradicting that $\wh Q$ contains no edges of $\wt H'_u$; whereas if $\ti\gamma_s \ne \ti\gamma_{s'}$ then the intersection $\ti\gamma_s \intersect \ti\gamma_{s'}$ has a finite endpoint $x$ with incident edge $E$ such that $x$ is also a finite endpoint of one of $\ti\gamma_s$ or $\ti\gamma_{s'}$ with incident edge $E$, and hence $E \subset \wt H'_u$, leading to the same contradiction. 

Next we define the underpaths of $\ti\gamma$ and their associated $Q$-vertices in $\ti\gamma_T$. The underpaths are the components of $\ti\gamma - \union_s \interior(\ti\gamma_s)$, a disjoint union of possibly degenerate subintervals of $\ti\gamma$, each contained in $\wh Q$. We put the $Q$-vertex $V_q \in T$ in the line $\ti\gamma_T$ if and only if one of the underpaths, denoted $\ti\gamma_q$, is contained in the $Q$-vertex space $\wh Q_q$. 

Finally we define the edges of $\ti\gamma_T$. We put the edge $E_b \subset T$ in $\ti\gamma_T$ if and only if its endpoints $V_s,V_q$ are in $\ti\gamma_T$ and the intersection $\ti\gamma_s \intersect \ti\gamma_q$ is nonempty, in which case that intersection is a point that we denote $p_b = \ti\gamma_s \intersect \ti \gamma_q \in \wh B_b$. 

This completes the definition of $\ti\gamma_T$, although we must still check that it is indeed a path in the tree~$T$, i.e.\ a locally injective edge path. Choosing an orientation of $\ti\gamma$, by construction we have decomposed $\ti\gamma$ into an alternating concatenation of overpaths and underpaths, what we call the \emph{over--under decomposition} of $\ti\gamma$. Associated to this decomposition we have an expression of $\ti\gamma_T$ as a concatenation of edges of $T$. We must check that this concatenation has no backtracking. Supposing that in $\ti\gamma_T$ the $S$-vertex $V_s$ is preceded by an edge $E_b$ and followed by an edge $E_{b'}$, it follows that the overpath $\ti\gamma_s$ is a finite path with endpoints $p_b \in \breve B_b$ and $p_{b'} \in \breve B_{b'}$; the desired inequality $E_b \ne E_{b'}$ follows from the inequality $\breve B_b \ne \breve B_{b'}$ which is true because, otherwise, it would follow that $\ti\gamma_s \subset \breve B_b = \breve B_{b'}$ contradicting that $\ti\gamma_s$ contains in edge of~$\wt H'_u$. And supposing that in $\ti\gamma_T$ the $Q$-vertex $V_q$ is preceded by an edge $E_b$ with opposite $S$-vertex $V_s$ and followed by an edge $E_{b'}$ with opposite $S$-vertex $V_{s'}$, by construction the over--under decomposition has three successive terms $\ti\gamma_s \ti\gamma_q \ti\gamma_{s'}$, and so by construction $\ti\gamma_s$ and $\ti\gamma_{s'}$ have disjoint interiors; but $\ti\gamma_s$ contains every $\wt H'_s$ edge in $\ti\gamma$ including at least one such edge, and $\ti\gamma_{s'}$ contains every $\wt H'_{s'}$ edge in $\ti\gamma$ including at least one such edge, and it follows that $V_s \ne V_{s'}$ and so $E_b \ne E_{b'}$. 

\medskip\noindent
\textbf{Endpoint behavior of overpaths.} For each full height line $\ti\gamma$ in $\wt G'$, we analyze the endpoint structure of each overpath $\ti\gamma_s \subset \ti\gamma$. First, $\ti\gamma_s$ is either a finite nondegenerate path with two finite endpoints, a ray with one finite endpoint and one ideal endpoint, or a line with two ideal endpoints. All finite endpoints of $\ti\gamma_s$ are in $\bdy_\ell \wh S_s$, and all ideal endpoints are in $\bdy\Gamma_s$. These endpoints satisfy the following: 

\medskip\noindent
\textbf{Properness of endpoints:} \quad
\begin{itemize}
\item[]\textbf{Finite--finite:} If $\ti\gamma_s$ is finite then its two endpoints are in distinct components of~$\bdy_\ell \wh S_s$.
\item[]\textbf{Finite--infinite:} If $\ti\gamma_s$ is a ray and if $\wh B_b$ is the component of $\bdy_\ell \wh S_s$ containing its finite endpoint then its ideal endpoint is not in $\bdy \Gamma_b$. 
\item[]\textbf{Infinite--infinite:} If $\ti\gamma_s$ is a line then for any component $\wh B_b$ of $\bdy_\ell \wh S_s$, the two ideal endpoints of $\ti\gamma_s$ are not both in $\bdy\Gamma_b$. 
\end{itemize}
To see why these hold, a finite path in $\wt G'$ having both endpoints in some $\wh B_b$ is entirely contained in $\wh B_b \subset \wh Q$. Similarly, any ray having finite endpoint in $\wh B_b$ and ideal endpoint in $\bdy \Gamma_b = \bdy \wh B_b$ is contained in $\wh B_b$, as is any line having both infinite endpoints in $\bdy \Gamma_b$. But no overpath is entirely contained in $\wh Q$.

\medskip\noindent
\textbf{Geodesic overpaths.} Henceforth in the proof of Lemma~\ref{geometricFullHeightCase} we fix a hyperbolic structure on $S$ with totally geodesic boundary. This lifts to a complete hyperbolic metric on each $\breve S_s$ with totally geodesic boundary. Denote
$$dj \from \breve Q \union \breve S \xrightarrow{\breve j} \wt Y \xrightarrow{\ti d} \wt G'
$$
By restricting $dj$ we obtain the following composition of quasi-isometries with uniform constants independent of $s$, and the associated composition of continuous extensions to Gromov compactifications:
$$dj_s \from 
\begin{cases}
\hphantom{\union \bdy \Gamma_s} \breve S_s \,\,\,\, \xrightarrow{\breve j_s} &\wh S_s \,\, \xrightarrow{\ti d_s} \,\, \wt G'_s \\
\breve S_s \union \bdy \Gamma_s \, \xrightarrow{\breve j_s} &\wh S_s \union \bdy \Gamma_s \,\, \xrightarrow{\ti d_s} \,\, \wt G'_s \union \bdy\Gamma_s
\end{cases}
$$
This allows us to identify $\bdy\Gamma_s$ with the space of asymptotic equivalence classes of geodesic rays in $\breve S_s$ and with the space of asymptotic equivalence classes of geodesic rays in $\wt G'_s$. 

Recall from Definition~\refGM{DefProperGeodesic} the concept of a proper geodesic in $\breve S_s$, namely a geodesic which is not contained in $\bdy\breve S_s$ and whose two endpoints (finite and/or ideal) are in $\bdy\breve S_s \union \bdy_\infinity \wh S_s$. Recall also that two proper geodesics $\ti\gamma_1,\ti\gamma_2 \subset \breve S_s$ are \emph{properly equivalent} if they have the same ideal endpoints and if the set of components of $\bdy\breve S_s$ containing a finite endpoint of $\ti\gamma_i$ is independent of $i=1,2$.

Consider a line $\ti\gamma$ in $\wt G'$ with realization $\ti\gamma_T$ in~$T$. For each $S$-vertex $V_s \in \ti\gamma_T$ with corresponding overpath $\ti\gamma_s \subset \ti\gamma$ we associate a \emph{geodesic overpath} $\breve\gamma_s \subset \breve S_s$, by choosing $\breve \gamma_s$ to be a geodesic whose endpoints in $\bdy \breve S_s \union \bdy\Gamma_s$ map to the endpoints of $\ti\gamma_s$ under the map $dj_s$. It follows that $\ti\gamma_s$ is obtained from $dj_s(\breve \gamma_s)$ by straightening. 
\begin{description}
\item[Properness of geodesic overpaths:] For each line $\ti\gamma \subset \wt G'$ with projection $\ti\gamma$ in $G$, exactly one of the following holds:
\begin{description}
\item[The line $\ti\gamma$ is a top boundary line:] There exists $s$ such that $\ti\gamma=\ti\gamma_s$ and $\breve \gamma_s$ is a component of $\bdy_0 \breve S_s$; equivalently, $\gamma$ is the line that winds bi-infinitely around $\rho$.
\item[Each geodesic overpath is proper:] \quad For each overpath $\ti\gamma_s$ of $\ti\gamma$, its associated geodesic overpath $\breve \gamma_s \subset \breve S_s$ is a proper geodesic. Furthermore, the finite endpoints of $\breve \gamma_s$ are in $\bdy_\ell \breve S_s$, and the choice of $\breve\gamma_s$ is unique up to proper equivalence.
\end{description}
\end{description}
To see why this holds, the statement on finite endpoints holds by construction of geodesic overpaths, and the statement on uniqueness holds because for each $s$ the map $\breve j_s$ induces a bijection between the components of $\bdy_\ell\breve S_s$ and the components of $\breve j_s(\bdy_\ell\breve S_s)$. For the rest, we need only rule out the possibility that $\breve \gamma_s$ is a component of $\bdy_\ell \breve S_s$, but then $\ti\gamma_s \subset \wh Q$, contradicting that $\ti\gamma_s$ contains an edge of $\wt H'_{u,s}$. 

\medskip\noindent
\textbf{Geometric model: Dynamic data.} Given the static data $Q \union S \xrightarrow{j} Y \xrightarrow{d} G'$ of a geometric model for $f'$ as specified earlier, the dynamic data of the geometric model consists of a pseudo-Anosov homeomorphism $\Theta \from S \to S$ such that the following dynamic relation holds:
\begin{description}
\item[$\Theta$ semiconjugates to $f'$:]
The maps $dj \circ \Theta$, $f' \circ dj \from S \to G'$ are homotopic.
\end{description}

Lemma~\ref{LemmaOverpathsAttraction} below will serve a second purpose, applying to the proof of Theorem~J via Proposition~\ref{PropWeakGeomRelFullIrr}. For this purpose we consider also dynamic data that is ``parasitically'' built upon the given static data, and is dynamically related not to the topological representative $f'$ of $\psi$ but instead to a topological representative of some other outer automorphism. Consider $\omega \in \Out(F_n)$ which preserves the free factor system~$\F$. Choose any topological representative $f_\omega \from (G',Q) \to (G',Q)$. Consider $\Theta \from S \to S$ a pseudo-Anosov homeomorphism satisfying the following:
\begin{description}
\item[$\Theta$ semiconjugates to $f_\omega$:] The maps $dj \circ \Theta$, $f_\omega \circ dj \from S \to G'$ are homotopic.
\end{description}
Note that this property depends only on $\omega$ and $\Theta$, and is independent of the choice of $f_\omega$. Also, this property implies that $\omega$ preserves the conjugacy class of the subgroup $j_*(\pi_1 S)$. 

Having chosen $\Theta$ which semiconjugates to $f_\omega$, let $\Lambda^\un, \Lambda^\st \subset S$ denote the unstable/stable geodesic lamination pair for $\Theta$ with respect to some fixed hyperbolic structure on $S$ with totally geodesic boundary. Proposition~\refGM{PropGeomLams} applies, with the conclusion that there exists a dual lamination pair $\Lambda^\pm_\omega \in \L^\pm(\omega)$ such that $\Lambda^\un,\Lambda^\st$ are taken to $\Lambda^+_\omega,\Lambda^-_\omega$, respectively, by the map $dj_\# \from \B(\pi_1 S) \to \B(G') \approx \B(F_n)$ (Proposition~\refGM{PropGeomLams} is currently written only for the case $f_\omega=f'$, but the proof clearly extends to the current situation.)

For each~$s$, the total lifts to $\breve S_s$ of $\Lambda^\un,\Lambda^\st$ will be denoted $\breve\Lambda^\un_s, \breve\Lambda^\st_s \subset \breve S_s$.

\begin{lemma}\label{LemmaOverpathsAttraction}
Consider the various objects specified above: static data $Q \union S \xrightarrow{j} Y \xrightarrow{d} G'$ of a geometric model for $f'$; an outer automorphism $\omega \in \Out(F_n)$ such that $\omega(\F)=\F$; a topological representative $f_\omega \from (G',Q) \to (G',Q)$; and a pseudo-Anosov homeomorphism $\Theta \from S \to S$ that semiconjugates to $f_\omega$, with unstable/stable lamination pair $\Lambda^\un,\Lambda^\st$ and corresponding dual lamination pair $\Lambda^+_\omega,\Lambda^-_\omega$. For each line $\ti\gamma \in \wt \B$ with image $\gamma \in \B$, if $\gamma$ is not weakly attracted to $\Lambda^+_\omega$ under iteration of $\omega$ then one of the following holds (up to reversal of orientation of~$\ti\gamma$):
\begin{description}
\item[(i) Degenerate $Q$-point:] There exists $q$ such that $\ti\gamma_T = V_q$, the underpath $\ti\gamma_q=\ti\gamma_s$ is a line in $\wh Q_q$, and $\gamma$ is in $Q$ and so is carried by~$\F$.
\item[(ii) Degenerate $S$-point:] There exists $s$ such that $\ti\gamma_T = V_s$, the overpath $\ti\gamma_s = \ti\gamma$ is a bi-infinite line in $\wt G'_s$, and the corresponding geodesic overpath $\breve\gamma_s \subset \breve S_s$ is either a leaf of $\breve\Lambda^\st_s$, or is contained in the interior of some principal region of $\breve \Lambda^\st_s$, or is equal to a component of $\bdy_0 \breve S_s$.
\item[(iii) One edge:] \quad
$\ti\gamma_T$ has the form $\xymatrix{V_s \ar@{-}[r]^{E_b} & V_q}$, and $\ti\gamma = \ti\gamma_s \ti\gamma_q$, and the corresponding proper geodesic ray $\breve\gamma_s \subset \breve S_s$ is contained in some crown principal region of~$\breve\Lambda^\st_s$. 
\item[(iv) Two edges, two $S$-endpoints:] \quad
$\ti\gamma_T$ has the form $\xymatrix{V_{s} \ar@{-}[r]^{E_{b}} & V_q \ar@{-}[r]^{E_{b'}} & V_{s'}}$, and $\ti\gamma = \ti\gamma_s \ti\gamma_q \ti\gamma_{s'}$, and the corresponding proper geodesic rays $\breve\gamma_s \subset \breve S_s$, $\breve\gamma_{s'} \subset \breve S_{s'}$ are contained in crown principal regions of $\breve\Lambda^\st_s$, $\breve\Lambda^\st_{s'}$ respectively. 
\end{description}
\end{lemma}

\begin{proof} Observe that for any overpath $\hat\gamma_s \subset \hat\gamma$ with corresponding geodesic overpath $\breve\gamma_s$, the geodesic $\breve\gamma_s$ does not cross $\breve\Lambda^\st_s$ transversely if and only if it has one of the forms that occur in one of conclusions (ii, iii, iv) of the lemma, namely: a leaf of $\breve\Lambda^\st_s$ as in (ii); a proper geodesic line as in (ii) or a proper geodesic ray as in (iii) or (iv), contained in the interior of a principal region of $\breve \Lambda^\st_s$; or a component of $\bdy_0 \breve S_s$ as in (ii). Although there is one other possibility that may occur for a general geodesic line in $\breve S_s$ that does not cross~$\breve\Lambda^\st_s$ transversely, namely a component of $\bdy_\ell \breve S_s$, such a line cannot be a geodesic overpath of anything because its straightened image under $dj_s$ is a line in $\wh Q \subset \wt G'$, and such a line is one big underpath with no overpaths. Note also that \emph{no} finite proper geodesic path in $\breve S_s$ is contained in a principal region of $\breve\Lambda^\st_s$ and \emph{every} finite proper geodesic path crosses $\breve\Lambda^\st_s$ transversely. 

To prove the lemma we argue by contradiction: assuming that none of conclusions (i,~ii,~iii,~iv) holds, we prove that $\gamma$ is weakly attracted to $\Lambda^+_\omega$ under iteration of~$\omega$. It follows from the assumption that $\gamma$ is not carried by~$\F$, for if it were carried then the first conclusion (i)~``Degenerate $Q$-point'' would hold. The over--under decomposition of $\ti\gamma$ therefore has at least one overpath $\ti\gamma_s$. Since none of conclusions (ii,~iii,~iv) hold, by the observation in the previous paragraph it follows that there exists some overpath $\ti\gamma_s$ of $\ti\gamma$ whose corresponding geodesic overpath $\breve\gamma_s$ crosses $\breve\Lambda^\st_s$ transversely. In particular $\breve\gamma_s$ is a proper geodesic in $\breve S_s$ (the only possibility for a nonproper geodesic overpath, that $\breve\gamma_s$ is a component of $\bdy_0 \wh S$, does not cross $\breve\Lambda^\st_s$ transversely).

From the assumption that $\Theta \from S \to S$ semiconjugates (via the map $dj$) to $f_\omega \from G' \to G'$ which topologically represents $\omega$, it follows that $\omega$ preserves the conjugacy class of the subgroup $(d \composed j)_*(\pi_1 S)$ in~$F_n$, which equals the conjugacy class of $\Gamma_s$. We may therefore choose $\Omega \in \Aut(F_n)$ representing $\omega$ such that $\Omega(\Gamma_s)=\Gamma_s$. Let $\ti f_\omega \from \wt G' \to \wt G'$ be the unique lift of $f_\omega$ that satisfies twisted equivariance with respect to the automorphism~$\Omega$, meaning that $\ti f_\omega(\gamma \cdot x) = \Omega(\gamma) \ti f_\omega(x)$ for all $\gamma \in F_n$, $x \in \wt G'$; equivalently, $\ti f_\omega$ and $\Omega$ induce the same homeomorphism of $\bdy \wt G' = \bdy F_n$, and in particular $\ti f_\omega$ preserves~$\bdy\Gamma_s$. It also follows that there is a unique lift $\breve\Theta \from \breve S_s \to \breve S_s$ of $\Theta$ whose action on $\bdy\Gamma_s$ equals the action of~$\ti f_\omega$, and therefore $\breve\Theta$ satisfies twisted equivariance with respect to the restricted automorphism $\Omega \restrict \Gamma_s$, that is, $\breve\Theta(\gamma \cdot x) = \Omega(\gamma) \breve\Theta(x)$ for all $x \in \breve S_s$, $\gamma \in \Gamma_s$. There is a homotopy between $dj \composed \Theta$ and $f_\omega \composed dj$, since $\Theta$ semiconjugates to $f_\omega$, and it  lifts to a $\Gamma_s$-equivariant homotopy between the maps $dj_s \composed \breve\Theta,\ti f_\omega \composed dj_s \from \breve S_s \to \wt G'_s$.

Consider the proper geodesic path $\gamma_s$ in $S$ that is obtained by projecting $\breve\gamma_s$, and so this path crosses $\Lambda^\st$ transversely. Applying Proposition~\refGM{PropNTWA} --- a version of Nielsen-Thurston Theory --- it follows that $\gamma_s$ is geodesically weakly attracted to $\Lambda^\un$ by iteration of $\Theta$. Unwinding the meaning of this statement in our current setting yields the following. A sequence $\delta^i_s$ ($i \ge 0$) of proper geodesics in $\breve S_s$ is said to be a \emph{proper geodesic iteration} of $\breve \gamma_s$ if this sequence is properly equivalent (respectively) to the proper geodesics obtained by straightening the sequence $\breve\Theta^i(\breve\gamma_s)$. In other words, $\delta^i_s$ has the same ideal endpoints in $\breve\Gamma_s$ as $\breve\Theta^i(\breve\gamma_s)$, and the finite endpoints of $\delta^i_s$ are in the same components of $\bdy_\ell \breve S_s$ as the finite endpoints of $\breve\Theta^i(\breve\gamma_s)$. 

\begin{description}
\item[Nielsen-Thurston Theory conclusion:] For any $\epsilon>0$ and $M>0$ there exists $K$ such that for any proper geodesic iteration $\delta^i_s$ of $\breve\gamma_s$, and for any $i \ge K$, there exists a subpath of $\delta{}^i_s$ and a subpath of a leaf of $\wt\Lambda^\un_s$, each of length $\ge M$, and having Hausdorff distance~$\le \epsilon$ from each other.
\end{description}
We also need:
\begin{description}
\item[Iteration Claim:] There exists a proper geodesic iteration $\delta^i_s$ of $\breve\gamma_s$ such that $\delta^i_s$ is a proper geodesic overpath of $(\ti f^i_\omega)_\#(\ti\gamma)$ for each $i \ge 0$.
\end{description}
Before proving this claim, we use it to finish the proof of Lemma~\ref{LemmaOverpathsAttraction}. The map $dj_s \from \breve S_s \to \wt G'_s$ is a quasi-isometry. It follows that if $\alpha,\beta$ are geodesic paths in $\breve S_s$, and if $\alpha$, $\beta$ have long subpaths that are Hausdorff close to each other, then their straightened images $(dj_s)_\#(\alpha)$, $(dj_s)_\#(\beta)$ in $\wt G'$ have long subpaths that are Hausdorff close to each other; furthermore, since $\wt G'$ is a tree, those straightened images have long subpaths that coincide. To be precise, for each $L>0$ there exists $\epsilon > 0$ and $M > 0$ such that if $\alpha,\beta$ have subpaths of length $\ge M$ having Hausdorff distance $\le \epsilon$ from each other, then $(dj_s)_\#(\alpha)$, $(dj_s)_\#(\beta)$ have coinciding subpaths of length $\ge L$. By combining the Iteration Claim and the Nielsen--Thurston Theory conclusion, for any $L$ we may choose $i$ sufficiently large so that $\alpha = \delta^i_s$ is a proper geodesic overpath corresponding to an overpath $(dj_s)_\#(\alpha)$ of the geodesic $(\ti f^i_\omega)_\#(\ti\gamma)$, and $\beta$ is a leaf of $\wt\Lambda^\un_s$ whose image $(dj_s)_\#(\beta)$ is a leaf of $\wt\Lambda^+$, and the overpath $(dj_s)_\#(\alpha)$ and leaf $(dj_s)_\#(\alpha)$ have coinciding subpaths of length $\ge L$. This proves that the sequence $(f^i_\omega)_\#(\gamma)$ converges weakly to a generic leaf of $\Lambda^+_\omega$; that is, $\gamma$ is weakly attracted to $\Lambda^+_\omega$ under iteration of $\omega$.

\medskip

We turn to the proof of the Iteration Claim. There is a natural subgroup $\Aut(F_n;T) \subgroup \Aut(F_n)$ that acts on the tree $T$, namely those automorphisms of $F_n$ that permute the subgroups $\Gamma_s$ and the subgroups $\Gamma_q$; this follows from the algebraic description of $T$ given earlier. 
\begin{description}
\item[Naturality Claim:] For each line $\ti\gamma \in \wt \B$ and each $\A \in \Aut(F_n;T)$, we have $\A(\ti\gamma)_T = \A(\ti\gamma_T)$.
\end{description}
Before proving this claim, we apply it to finish the proof of the Iteration Claim. By hypothesis we have $\omega(\F)=\F$ and therefore $\Omega$ permutes the subgroups $\Gamma_q$. We also have that $\Theta$ semiconjugates to $f_\omega$, which implies that $\omega$ fixes the conjugacy class of $j_*(\pi_1 S)$, which implies that $\Omega$ permutes the subgroups $\Gamma_s$. This shows that $\Omega^i \in \Aut(F_n;T)$ for all integers~$i$, and so the Naturality Claim applies to each $\Omega^i$. Let $\delta^0_s = \breve \gamma_s = \breve\Theta^0(\breve\gamma_s)$, and so $V_s \in \ti\gamma_T$. Since $\Omega(\Gamma_s)=\Gamma_s$ (under the action of $\Aut(F_n)$ on subgroups), and since $T$ is determined by its vertex and edge stabilizers, it follows that $\Omega(V_s)=V_s$ (under the action of $\Aut(F_n;T)$ on~$T$). Applying the Naturality Claim by induction it follows (for all $i \ge 0$) that $V_s \in \Omega^i(\ti\gamma)_T$, which implies that $(\ti f^i_\omega)_\#(\ti\gamma)$ has an overpath associated to $V_s$; let $\delta^i_s$ denote the corresponding geodesic overpath in $\breve S_s$. We must still show that $\delta^i_s$ is a proper geodesic iteration of $\breve\gamma_s = \delta^0_s$. Suppose that $\breve\gamma_s$ has a finite endpoint on the component $\breve B_b$ of $\bdy_\ell\breve S_s$, and so $E_b \subset \ti\gamma_T$. It follows that $\Omega^i(E_b) \subset \Omega^i(\ti\gamma_T) = \Omega^i(\ti\gamma)_T$, from which it follows in turn that $\delta^i_s$ has a finite endpoint on $\breve\Theta^i(\breve B_b)$, which also contains a finite endpoint of $\breve\Theta^i(\breve\gamma_s)$. Thus $\delta^i_s$ and $\breve\Theta^i(\breve \gamma_s)$ have finite endpoints in the same components of $\bdy_\ell \breve S_s$. Suppose that $\breve\gamma_s$ has an infinite endpoint $x \in \bdy\Gamma_s$, and let $y$ be its opposite endpoint, and so one of two cases holds: 
\begin{description}
\item[Case (a):] For some component $\breve B_b$ of $\bdy_\ell\breve S_s$ we have $y \in \breve B_b$.
\item[Case (b):] $y \in \bdy\Gamma_s$.
\end{description}
In Case~(a), we already know that $\delta^i_s$ has a finite endpoint $y_i \in \breve\Theta^i(B_b)$, and so the overpath of $(\ti f^i_\omega)_\#(\ti\gamma)$ associated to $V_s$ has finite endpoint $j(y_i) \in j(\breve\Theta^i(B_b))$. We also know that $\Omega^i(x)$ is an ideal endpoint of $(\ti f^i_\omega)_\#(\ti\gamma)$, and that $\Omega^i(x) \in \bdy\Gamma_s$. From the definition of overpaths it follows that the subray of $(\ti f^i_\omega)_\#(\ti\gamma)$ with finite endpoint $j(y_i)$ and infinite endpoint $\Omega^i(x)$ is the overpath of $(\ti f^i_\omega)_\#(\ti\gamma)$ associated to $V_s$, and therefore the corresponding geodesic overpath $\delta^i_s$ is a ray with infinite endpoint $\Omega^i(x)$, completing Case (a). In Case~(b) the line $\ti\gamma$ has ideal endpoints $x,y \in \bdy \Gamma_s$, and $\ti\gamma_T = \{V_s\}$, and the over--under decomposition of $\ti\gamma$ is just a single overpath line whose corresponding geodesic overpath is the line $\delta^0_s$ with ideal endpoints $x,y$. It follows that $(f^i_\omega)_\#(\ti\gamma)$ has ideal endpoints $\Omega^i(x),\Omega^i(y) \in \bdy\Gamma_s$ and $\Omega^i(\ti\gamma)_T = \Omega^i(\ti\gamma_T) = \{V_s\}$, and that the over--under decomposition of $\Omega^i(\ti\gamma_T)$ is a single overpath line whose corresponding geodesic overpath is the line $\delta^i_s$ with ideal endpoints $\Omega^i(x),\Omega^i(y)$, which equal the ideal endpoints of $\breve\Theta^i(\delta^0_s)$. This proves the Iteration Claim, subject to the Naturality Claim.

\medskip

To complete the proof of Lemma~\ref{LemmaOverpathsAttraction} it remains to prove the Naturality Claim. Denote the action of $\A$ on $T$ as $\A(V_s)=V_{s'}$, $\A(V_q)=V_{q'}$, $\A(E_b)=E_{b'}$. It follows that $\A(\bdy\Gamma_s)=\bdy\Gamma_{s'}$, $\A(\bdy\Gamma_q)=\bdy\Gamma_{q'}$, and $\A(\bdy\Gamma_b)=\Gamma_{b'}$. We consider several cases separately. First, $\ti\gamma_T$ degenerates to the $Q$-vertex $V_q$ if and only if $\bdy\ti\gamma \subset \bdy\Gamma_q$ if and only if $\A(\bdy\ti\gamma) \subset \bdy\Gamma_{q'}$ if and only if $\A(\ti\gamma_T)$ degenerates to $V_{q'}=\A(V_q)$. Next, $\ti\gamma_T$ degenerates to the $S$-vertex $V_s$ if and only if $\bdy\ti\gamma \subset \bdy\Gamma_s$ and for all components $\breve B_b$ of $\bdy_\ell \breve S_s$ we have $\bdy\ti\gamma \not\subset \bdy\Gamma_b$ if and only if $\A(\bdy\ti\gamma) \subset \bdy\Gamma_{s'}$ and for all components $\breve B_{b'}$ of $\bdy_\ell \breve S_{s'}$ we have $\A(\bdy\ti\gamma)  \not\subset \bdy\Gamma_{b'}$ if and only if $\A(\ti\gamma_T)$ degenerates to $V_{s'}=\A(V_s)$. 

For the nondegenerate cases it suffices to prove for each edge $E_b \subset T$ that $E_b \subset \ti\gamma_T$ if and only if $E_{b'}=\A(E_b) \subset \A(\ti\gamma_T)$, and we do this by a separation argument taking place in the Gromov compactification $\wt Y \union \bdy F_n$. To set up the argument we need further notation. Let $N \subset S$ be a regular neighborhood of $\bdy_\ell S$ and let $S^+ = \closure(S-N)$. We have $\bdy S^+ = \bdy_0 S^+ \union \bdy_\ell S^+$ where $\bdy_0 S^+ = \bdy_0 S$ and $\bdy_\ell S^+ = \bdy N - \bdy_\ell S$. The inclusions $\bdy_\ell S \rightarrow N \leftarrow \bdy_\ell S^+$ induce bijections of components. Let $\breve N \subset \breve S$ be the total lift of $N$ to the covering space $\breve S$, so $\breve N$ is an $F_n$-equivariant regular neighborhood of $\breve B = \bdy_\ell\breve S$. The inclusion $\breve B \subset \breve N$ induces a component bijection denoted $\breve B_b \leftrightarrow \breve N_b$. Let $\breve B^+_b = \bdy \breve N_b - \breve B_b$, and let $\breve B^+ = \union_b \breve B^+_b$, so the inclusion $\breve B^+ \subset \breve N$ induces the component bijection $\breve B^+_b \leftrightarrow \breve N_b$. The map $\breve j$ embeds $\breve B^+$ and $\breve S^+$ in $\wt Y$ with images $\wh B^+$ and $\wh S^+$, respectively, and with components denoted $\wh B^+_b = \breve j(\breve B_b)$, and $\wh S^+_s = \breve j(\breve S^+_s)$. Also let $\wh N_b = \breve j(\breve N_b)$ and let $\wh N = \union_b \wh N_b$.

Since $\A$ preserves the subgroup systems $[\pi_1 Q]$ and $[\pi_1 S]$, and since the outer automorphism of $\pi_1 S$ obtained by restricting $\A$ preserves the lower boundary subgroup system $[\bdy_\ell(\pi_1 S)]$, it follows that $\A$ is represented by a homotopy equivalence $\alpha \from Y \to Y$ restricting to maps $S^+ \mapsto S^+$, \, $Q \mapsto Q$, \, $B^+ \mapsto B^+$ and $j(N) \mapsto j(N) \union Q$. From this it follows that there is an $\A$-twisted equivariant lift $\ti \alpha \from \wt Y \to \wt Y$ restricting to $\wh S^+_b \mapsto \wh S^+_{b'}$, \, $\wh Q^+_q \mapsto \wh Q^+_{q'}$, \, $\wh B^+_b \mapsto \wh B^+_{b'}$ and $\wh N_b \mapsto \wh N_{b'} \union \wh Q_q$, where $V_q \in T$ is the $Q$-vertex incident to $E_b \subset T$.

Consider an edge $E_b \subset T$, with incident vertices $V_q$ and $V_s$. In $\wt Y$, the subset $\wt Y - \wh B^+_b$ has two components $\wh Y_{bq}$ and $\wh Y_{bs}$, where $\wh Y_{bq} \supset \wh Q_q$ and $\wh Y_{bs} \supset \wh S^+_s - \wh B^+_b$. In the Gromov compactification $\wt Y \union \bdy F_n$, the closure of the line $\wh B^+_b$ is compact arc $\wh B^+_b \union \bdy \Gamma_b$, and its complement $(\wt Y \union F_n) - (\wh B^+_b \union \bdy \Gamma_b)$ has two components $\wh Y_{bq} \union \bdy \wh Y_{bq}$ and $\wh Y_{bs} \union \bdy \wh Y_{bs}$ where $\bdy \wt Y_{bq}$ is the set of accumulation points of $\wt Y_{bq}$ in $\bdy F_n - \bdy \Gamma_b$, and similarly for $\bdy \wt Y_{bs}$. 

From the description of $\ti\alpha$ above (and the definition of realization of lines in $T$) it follows that $E_b \subset \ti\gamma_T$ if and only if $\bdy\ti\gamma$ has one point in $\bdy\wh Y_{bq}$ and one point in $\bdy\wh Y_{bs}$ if and only if $\A(\bdy\ti\gamma)$ has one point in $\bdy\wh Y_{b'q'}$ and one point in $\bdy\wh Y_{b's'}$ if and only if $E_{b'} \subset \A(\ti\gamma_T)$, completing the proof.
\end{proof}

\paragraph{Applying of Lemma~\ref{LemmaOverpathsAttraction}: Proof of Lemma~\ref{geometricFullHeightCase}.} We continue with the notations that were reviewed and established in the paragraphs surrounding the statement of Lemma~\ref{geometricFullHeightCase}.

As in the hypothesis of Lemma~2.19, let $\gamma \in \B$ be a line which is not weakly attracted to $\Lambda^+_\phi$, and whose realization in $G$ has height $r$ and so is not contained in $G_{r-1}$. It follows that $\gamma$ is not supported by $\F$ and so its realization in $G'$ (also denoted $\gamma)$ is not contained in $G'_{u-1}$. Choose a lift $\ti\gamma \subset \wt G'$ with realization $\ti\gamma_T$ in~$T$. We apply Lemma~\ref{LemmaOverpathsAttraction} with $\omega=\psi$, noting that $\Lambda^\un = \Lambda^+_\psi = \Lambda^-_\phi$. It follows that the line $\ti\gamma_T$ and the over--under decomposition of $\ti\gamma$ as related to $\Lambda^\un$ must match one of the forms in the conclusion of Lemma~\ref{LemmaOverpathsAttraction}. We go through the four conclusions one at a time, ruling out the first and using the rest to show that $\gamma$, in relation to $\Lambda^-_\phi$, matches one of the forms in the conclusion of Lemma~\ref{geometricFullHeightCase}. In some parts of the proof we assert that certain rays in $G'$ are principal rays of height~$u$, and these assertions are justified by application of Fact~\refGM{FactSingularRay}.

\medskip\textbf{First Case: Degenerate $Q$-vertex.} This case is ruled out since $\gamma$ is not in $Q=G'_{u-1}$. 

\smallskip\textbf{Second Case: Degenerate $S$-vertex.} In this case we have $\ti\gamma_T = V_s$ and $\ti\gamma = \ti\gamma_s \subset \wt G'_s$ with corresponding geodesic overpath $\breve\gamma_s \subset \breve S_s$ being either a leaf of $\breve\Lambda^\un_s$, or a geodesic line contained in the interior of a principle region of $\breve\Lambda^\un_s$, or a component of $\bdy_0 \breve S_s$. 

If $\breve\gamma_s$ is a leaf of $\wt\Lambda^\un_s$ then, by Proposition~\refGM{PropGeomLams}, $\gamma$ is a generic leaf of $\Lambda^-_\phi$, which matches conclusion~\pref{ItemGFHCLeaf} of Lemma~\ref{geometricFullHeightCase}.

If $\breve\gamma_s$ is a component of $\bdy_0 \breve S_s$ then $\ti\gamma \subset \wt G'_s$ projects to $\gamma$ in $G'$ that winds bi-infinitely around the circuit $\rho'_u$ or its inverse $\bar\rho'_u$, which matches conclusion~\pref{ItemGFHCIterate} of Lemma~\ref{geometricFullHeightCase}.

Suppose now that $\breve\gamma_s$ is a geodesic line contained in the interior of an upstairs principal region $\breve P$ of $\wt\Lambda^\un_s \subset \breve S_s$, and let $P \subset S$ be the downstairs principle region of $\Lambda^\un$ obtained by projecting~$\breve P$. Let $\bdy_\infinity \breve P \subset \bdy\Gamma_s \subset \bdy F_n$ denote the ideal points of $\breve P$. If $P$ is an ideal polygon then $\bdy_\infinity \breve P$ is a finite cyclically ordered set. Otherwise $P$ is a crown, there is a unique component $L$ of $\bdy \breve S_s$ which is also a component of $\bdy \breve P$, and $\bdy_\infinity \breve P$ is the union of the two points $\bdy_\infinity L$ with a countably infinite, linearly ordered, discrete subset of cusps, accumulating in opposite directions on the two points of $\bdy_\infinity L$. Since $\psi$ is rotationless, we may choose a representative $\Psi \in \Aut(F_n)$ representing $\psi$ so that $\Psi(\bdy\Gamma_s)=\bdy\Gamma_s$ and so that $\Psi$ fixes each point of $\bdy_\infinity \breve P$. Corresponding to $\Psi$ there is a lift $\ti f'_\Psi \from \wt G' \to \wt G'$ of $f'$ (see Section~\refGM{SectionLiftFacts}), and $\ti f'_\Psi$ preserves~$\wt G'_s$. There is also a unique corresponding lift $\breve\Theta \from \breve S_s \to \breve S_s$ of $\Theta$, using the correspondence under which $\ti d \composed \wt \Theta$ and $\ti f'_\Psi \composed \ti d \from \breve S_s \to \wt G'_s$ are $\Gamma_s$-equivariantly homotopic. This map $\breve\Theta$ preserves the principal region $\breve P$ and fixes each point of $\bdy_\infinity \breve P$, and each cusp of $\bdy_\infinity \breve P$ is an attracting point for the action of $\breve\Theta$ on $\bdy\Gamma_s$ (by Proposition~\refGM{PropNielsenThurstonTheory}).   

Consider the pair of ideal endpoints $\bdy_\infinity \ti\gamma = \bdy_\infinity \breve\gamma_s = \{\xi_1,\xi_2\} \subset \bdy_\infinity\breve P \subset \bdy F_n$. If $P$ is a crown and $\{\xi_1,\xi_2\} = \bdy_\infinity L$ as above, then $\breve\gamma_s=L$, contradicting that $\breve\gamma_s$ is contained in the interior of~$\breve P$. It follows that at least one of $\xi_1,\xi_2$ is a cusp of $\breve P$, say $\xi_1$. We orient $\ti\gamma$ and $\breve\gamma_s$ so that $\xi_1$ is each of their initial ideal endpoints. 

If $\xi_i$ is a cusp of $\breve P$ ($i=1,2$) then, since $\xi_i$ is an attracting point for the action of $\breve\Theta^\inv$ on $\bdy\Gamma_s$, it follows that: $\xi_i$~is~an attracting point for the action of $\Psi$ on all of $\bdy F_n$ (Fact~\refGM{LemmaFixPhiFacts}); $\xi_i$~is represented by a principal ray $\wt R_i = [\ti v_i,\xi_i) \subset \wt G'$ generated by an oriented edge $\wt E_i \subset \wt G'$ whose initial direction and initial vertex $\ti v_i$ are fixed by $\ti f'_\Psi$ (Fact~\refGM{FactSingularRay}). Also we have $\wt E_i \subset \wt H'_s$, for otherwise $\xi_i \not\in \bdy\Gamma_s$. 

Knowing that $\xi_1$ is a cusp of $\breve P$, we consider two cases depending on whether $\xi_2$ is also a cusp of $\breve P$.

Suppose that $\xi_2$ is a cusp of $\breve P$. Choose corresponding principal rays $\wt R_1,\wt R_2$ as above (the choice need not be unique, see Lemma~\refGM{LemmaPrincipalRayUniqueness}). Note that $\ti\mu = [\ti v_1,\ti v_2] \subset \wt G'$ is either a trivial path or a Nielsen path of $\ti f'_\Psi$. Let $\mu$ be the path in $G'$ to which $\ti\mu$ projects. The path~$\ti\mu$, if not trivial, decomposes uniquely into fixed edges and indivisible Nielsen paths of $\ti f'_\Psi$. Choose $\wt R_1,\wt R_2$ so as to minimize the number of terms of this decomposition of~$\ti\mu$. We claim that the interior of $\ti\mu$ is disjoint from the interiors of $\wt R_1$ and $\wt R_2$, implying that $\ti\gamma = \wt R_1^\inv \, \ti\mu \, \wt R_2$ (which we show matches conclusion~\pref{ItemGFHCTwoRays} of Lemma~\ref{LemmaOverpathsAttraction}, after verifying the form of~$\mu$). If the claim fails, if say $\interior(\ti\mu) \intersect \interior(\wt R_1) \ne \emptyset$, then the first term of the decomposition of $\ti\mu$ contains an edge of $\wt H'_u$. By Fact~\refGM{FactEGNPUniqueness} that term is a lift of $\rho'_u$ or $\bar\rho'_u$ that we denote $\alpha\bar\beta$, and so $\ti\mu = \alpha\bar\beta \ti\mu'$. Applying Lemma~\refGM{LemmaPrincipalRayUniqueness} it follows that $(\wt R_1 - \alpha) \union \beta$ is also a principle ray representing $\xi_1$, whose base point is connected to the base point of $\wt R_2$ by the path $\ti\mu'$, contradicting minimality. It remains to verify that the form of $\mu$ matches conclusion~\pref{ItemGFHCTwoRays} of Lemma~\ref{LemmaOverpathsAttraction}. If $\ti\mu$ is trivial we are done. Otherwise $\ti\mu$ is a Nielsen path for $\ti f'_\Psi$ and it projects to a Nielsen path $\mu$ for $f'_\Psi$. If $\mu$ has height~$u$ then it is an iterate of $\rho'_u$ or $\bar\rho'_u$ (by Fact~\refGM{FactEGNielsenCrossings}) and we are done. Otherwise $\mu$ has height~$\le u-1$ and we are also done.

Suppose that $\xi_2$ is not a cusp of $\breve P$, so $P$ is a crown and $\xi_2 \in \bdy_\infinity L$. We consider separately the cases $L \subset \bdy_\ell \breve S_s$ and $L \subset \bdy_0 \breve S_s$. If $L \subset \bdy_\ell \breve S_s$ then $L = \breve B_b$ for some edge $E_b \subset T$ incident to $V_s$; let $V_q$ be the opposite $Q$-vertex of $E_b$. It follows that the straightened image of $\ti d(L)$ in $\wt G'_s$ is the unique line in $\wh B_b = \wh Q_q \intersect \wt G'_s = \wh Q_q \intersect \wh\Sigma_s$, and that $\xi_2 \in \bdy\Gamma_b$ is an ideal endpoint of that line. We may therefore write $\ti\gamma$ as a back-to-back concatenation of rays $\ti\gamma = \wt R_1^\inv \wt R^{\vphantom{\inv}}_2$ where $R_2$ is the maximal subpath of $\ti\gamma$ contained in $\wh Q_q \intersect \wt G'_s$, and $\wt R_1 = \ti\gamma \setminus \wt R_2$ is the minimal subpath of $\ti\gamma$ containing every edge of $\wt H'_u \intersect \ti\gamma$. Since $\ti f'_\Psi$ is a principal lift fixing $\xi_1,\xi_2$ it follows that $\ti f'_\Psi$ fixes the common base point of $\wt R_1$ and $\wt R_2$ and fixes the initial direction of $\wt R_1$, and therefore $\wt R_1$ is a principal ray representing $\xi_1$, matching conclusion~\pref{ItemGFHCOneRay} of Lemma~\ref{LemmaOverpathsAttraction}. If $L \subset \bdy_0 \breve S_s$ then the same analysis works except that the straightened image of $\ti d(L)$ is a lift of the bi-infinite iterate of $\rho'_u$ or $\bar\rho'_u$, and the subray $\wt R_2 \subset \ti\gamma$ is the maximal subray of $\ti\gamma$ that is a lift of a singly infinite iterate of $\rho'_u$ or $\bar\rho'_u$. It still holds that $\ti f'_\Psi$ fixes the common base point of $\wt R_1$ and $\wt R_2$ and the initial direction of~$\wt R_1$, and that $\wt R_1$ is a principal ray representing $\xi_1$, also matching conclusion~\pref{ItemGFHCOneRay}.

\smallskip\textbf{Third Case: One edge.} Up to reversal of orientation we have an over--under decomposition $\ti\gamma = \ti\gamma_s \ti\gamma_q$, and $\ti\gamma_T = E_b$ with $Q$-endpoint $V_q$ and $S$-endpoints $V_s$. Note that the proper geodesic overpath $\breve\gamma_s$ corresponding to $\ti\gamma_s$ is contained in a principal region $\breve P$ of $\breve\Lambda^\un_s$ covering a crown principal region~$P \subset S$. We may choose the principle automorphism $\Psi \in \Aut(F_n)$ representing $\psi$, and the lift $\breve\Theta \from \breve S_s \to \breve S_s$ of $\Theta$, as in the case ``Degenerate $S$-point'', fixing each point of $\bdy_\infinity \breve P$, preserving $B_b$, and preserving $\bdy\Gamma_s$, $\bdy\Gamma_q$, and $\bdy\Gamma_b$. Let $\ti f'_\Psi$ be the principle lift of $f'$ that corresponds to $\Psi$. The rays $\wt R_1$ and $\wt R_2 = \ti\gamma_l \subset \wh Q_q$ have a common finite endpoint $x \in \wh Q_q \intersect \wt G'_s$ fixed by $\ti f'_\Psi$, and the initial direction of $\ti\gamma_s$ is in $\wt H'_{u,s}$ and is fixed by $\ti f'_\Psi$. It follows that $\wt R_1$ is a principal ray representing $\xi_1$, matching conclusion~\pref{ItemGFHCOneRay} of Lemma~\ref{LemmaOverpathsAttraction}.

\smallskip\textbf{Fourth Case: Two edges and two $S$-endpoints.} We have over--under decomposition $\ti\gamma = \ti\gamma_{s-} \union \ti\gamma_b \union \ti\gamma_{s+}$. By a very similar analysis as in the previous case, carried out on each of the rays $\wt R_1 = \ti\gamma_{s-}$ and $\wt R_2 = \ti\gamma_{s_+}$, one proves that these are both principal rays, and setting $\mu = \ti\gamma_q$ (which is either trivial or in $\wh Q_q$), we have matched conclusion~\pref{ItemGFHCTwoRays} of~Lemma~\ref{LemmaOverpathsAttraction}, completing the proof of the lemma
\qed

\paragraph{A further application of Lemma~\ref{LemmaOverpathsAttraction}.} Proposition~\ref{PropWeakGeomRelFullIrr} to follow will, in \PartFour, be incorporated as one of the conclusions of Theorem J, the relative, general version of Theorem I which was stated in the Introduction. 

As an example of Proposition~\ref{PropWeakGeomRelFullIrr}, consider a compact surface $S$ with nonempty boundary, and a subset $\bdy_\ell S \subset \bdy S$ consisting of all but one component of $\bdy S$. From elementary topology one knows that $S$ deformation retracts to an embedded finite graph containing $\bdy_\ell S$, and so the inclusion $\bdy_\ell S \subset S$ determines a free factor system $[\bdy_\ell S]$ in the free group $\pi_1 S$ having one rank~$1$ component for each component of $\bdy_\ell S$. For any $\phi \in \MCG(S) \subgroup \Out(\pi_1 S)$, if $\phi$ is a pseudo-Anosov element in $\MCG(S)$ then, regarded in $\Out(\pi_1 S)$, $\phi$ is fully irreducible relative to $[\bdy_\ell S]$. In the very special case that $\bdy S$ has just one component, one obtains the ``well known''  theorem saying that if $\phi$ is pseudo-Anosov then $\phi$ is fully irreducible in the absolute sense.


\begin{proposition}\label{PropWeakGeomRelFullIrr} Let $\F$ be a free factor system, let $Q \disjunion S \xrightarrow{j} Y \xrightarrow{d} G$ be the static data of a geometric model for some \ct\ whose top stratum is \eg-geometric, and suppose that $Q \subset G$ represents $\F$. Given $\omega \in \Out(F_n)$, if $\omega(\F)=\F$, and if there exists a topological representative $f_\omega \from G \to G$ of $\omega$ and a pseudo-Anosov homeomorphism $\Theta \from S \to S$ which semiconjugates to $f_\omega$, then $\omega$ is fully irreducible rel~$\F$. 
\end{proposition}


\begin{proof} We may assume that $f_\omega(Q)=Q$. Let $\Lambda^\un,\Lambda^\st$ be the unstable/stable lamination pair of $\Theta$. As was done just prior to the statement of Lemma~\ref{LemmaOverpathsAttraction}, we may apply the proof of Proposition~\refGM{PropGeomLams} to our present situation, with the conclusion that the induced map $dj_\# \from \B(\pi_1 S) \to \B(G)=\B(F_n)$ takes the laminations $\Lambda^\un,\Lambda^\st \subset \B(\pi_1 S)$ to a lamination pair $\Lambda^+_\omega$, $\Lambda^-_\omega$ for~$\omega$. Applying Proposition~\refGM{PropGeomLams}~\prefGM{ItemLamSurfSupport}, $\F_\supp(\Lambda^\pm_\omega) = \F_\supp[\pi_1 S]$. It follows that $\F_\supp(\F,\Lambda^\pm_\omega) = \F_\supp([\pi_1 Q],[\pi_1 S]) = \{[F_n]\}$, the last equation following from Lemma~\trefGM{LemmaScaffoldFFS}{ItemRelFFS}. The lamination pair $\Lambda^\pm_\omega$ therefore fills rel~$\F$. Note also by Lemma~\trefGM{LemmaScaffoldFFS}{ItemRelFFS} that every conjugacy class carried by the rank~$1$ subgroup system $[\bdy_0 S]$ fills rel~$\F$.

We next prove that for every nontrivial $\tau \in F_n$, if its conjugacy class $[\tau]$ is not weakly attracted to~$\Lambda^+_\omega$ under the action of $\omega$ on $\B$ then $[\tau]$ is carried by~$\F \union \{[\bdy_0 S]\}$. Let $\gamma$ be the line in $G'$ which wraps bi-infinitely around the circuit in $G'$ representing~$[\tau]$. Let $\ti\gamma \subset \wt G'$ be the lift of $\gamma$ which is the axis of $\tau$ in~$\wt G'$. Since the over--under decomposition of $\ti\gamma$ is evidently $\tau$-invariant, three cases can occur: (1) $\ti\gamma$ is a bi-infinite concatenation alternating between overpaths and underpaths; (2) all of $\ti\gamma$ is an underpath; (3) all of $\ti\gamma$ is an overpath. But Lemma~\ref{LemmaOverpathsAttraction} rules out case~(1), since $[\tau]$ is not weakly attracted to $\Lambda^+_\omega$. In case~(2) $[\tau]$ is carried by $\F$. In case~(3) it follows that $\tau \in \Gamma_s$ for some $s$ and that the corresponding geodesic overpath $\breve\gamma_s$ is the axis of the action of $\tau$ on $\breve S_s$. Lemma~\ref{LemmaOverpathsAttraction} implies that one of three possibilities holds: (3a) $\breve\gamma_s$ is a leaf of $\Lambda^\un$; (3b) $\breve\gamma_s$ is contained in the interior of a principle region of $\Lambda^\un$; (3c) $\breve\gamma_s$ is a component of $\bdy_0\breve S_s$. Cases (3a) and (3b) contradict that $\breve\gamma_s$ is the axis of $\tau$, and Case (3c) implies that $[\tau]$ is carried by $\{[\bdy_0 S]\}$.

The conclusion of the proof is a general argument. Assuming by contradiction that $\phi$ is not fully irreducible rel~$\F$, after passing to a rotationless power there is a $\phi$-invariant free factor system~$\F'$ with proper inclusions $\F \sqsubset \F' \sqsubset \{[F_n]\}$, and there is a \ct\ representative $f \from G \to G$ of $\omega$ with properly included core filtration elements $G_s \subset G_{s'} \subset G_t=G$ representing $\F \sqsubset \F' \sqsubset \{[F_n]\}$ respectively. Since $\Lambda^\pm_\phi$ fills rel~$\F$, the stratum of $G$ corresponding to $\Lambda^+_\omega$ is the top stratum~$H_t$ and is an \eg-stratum. By Definition~\ref{defn:Z} and Corollary~\ref{CorPMna}~\pref{ItemAnaDependence}, the nonattracting subgroup system $\A_\na(\Lambda^+_\omega)$ has the form $[G_{t-1}]$ if $\Lambda^+_\omega$ is nongeometric or $[G_{t-1}] \union \{[\<\rho\>]\}$ if $\Lambda^+_\omega$ is geometric (the second case holds in our situation, but we do not make use of that). In either case there exists a circuit that is not carried by the core graph $[G_s]$ but is carried by the core graph $[G_{t-1}]$. It follows that there exists a conjugacy class $[\tau]$ that is not carried by $\F$ but is carried by $\F'$, and is therefore not weakly attracted to~$\Lambda^+_\omega$. Applying the previous paragraph, $\tau$ is carried by $[\bdy_0 S]$, implying that $[\tau]$ fills rel~$\F$ and is therefore not carried by~$\F'$, a contradiction.
\end{proof}

\subsection{General nonattracted lines and the Proof of Theorem G} 
\label{SectionNALinesGeneral}

We are given rotationless $\phi, \, \psi=\phi^\inv \in \Out(F_n)$, a lamination pair $\Lambda^\pm_\phi \in \L^\pm(\phi)$, and a \ct\ $\fG$ representing $\phi$ with \eg\ stratum $H_r$ corresponding to $\Lambdapp$ (note that we are abandoning the notational conventions of Section~\ref{SectionTheoremGStatement}). 

From Definition~\ref{defn:Z} we have the path set $\<Z,\hat\rho_r\> \subset \wh\B$. From Lemma~\ref{LemmaZPClosed} this path set is a groupoid and each line $\gamma \in \<Z,\hat\rho_r\>$ is carried by $\A_{\na}(\Lambdapmp)$. Recall also Lemma~\ref{LemmaThreeNASets} which says that $\gamma\in\B_\na(\Lambdapp)$ as long as it satisfies at least one of the following conditions.
\begin{enumerate}  
\item\label{ItemGammaCarriedNA}
 $\gamma$ is carried by $\A_{\na}(\Lambdapp)$. 
\item $\gamma \in \B_\sing(\psi)$.
\item $\gamma \in \B_\gen(\psi)$.
\end{enumerate} 
Since each line carried by $\A_\na(\Lambdapp)$ is in $\B_\ext(\Lambdapmp;\psi)$ it follows that for each line $\gamma \in \<Z,\hat\rho_r\>$ we have $\gamma \in \B_\ext(\Lambda)$; we use this repeatedly in this section. 

To simplify the notation of the proof we define the set of \emph{good lines} in $\B$ to be 
$$\B_\good(\Lambdapmp;\psi) = \B_\ext(\Lambdapmp;\psi) \union \B_\sing(\psi) \union \B_\gen(\psi)
$$
and we repeatedly use Proposition~\ref{PropStillClosed} which with this notation says that $\B_\good(\Lambdapmp;\psi)$ is closed under concatenation.

The conclusion of Theorem~G says that $\B_\na(\Lambdapp) = \B_\good(\Lambdapmp;\psi)$. One direction of inclusion, namely $\B_\na(\Lambdapp) \supset \B_\good(\Lambdapmp;\psi)$, follows from Lemma~\ref {LemmaThreeNASets} and the fact that each line in $\B_\ext(\Lambdapmp;\psi)$ is a concatenation of lines in $\B_\sing(\psi)$ and lines carried by $\A_\na(\Lambdapmp)$. 

\smallskip

We turn now to proof of the opposite inclusion $\B_\na(\Lambdapp) \subset \B_\good(\Lambdapmp;\psi)$. Given $\gamma \in \B_\na(\Lambdapp)$, if the height of $\gamma$ is less than $r$ then $\gamma \in \<Z,\hat \rho_r\>$ and we are done. Henceforth we proceed by induction on height. Define an \emph{inductive concatenation} of $\gamma$ to be an expression of $\gamma$ as a concatenation of finitely many lines in $\B_\good(\Lambdapmp;\psi)$ and at most one line $\nu$ of height lower than $\gamma$. If we can show that $\gamma$ has an inductive concatenation, we prove that $\gamma$ is good as follows. In some cases $\nu$ does not occur in the concatenation and so $\gamma$ is good by Proposition~\ref{PropStillClosed}. Otherwise, using invertibility of concatenation, it follows that $\nu$ is expressed as a concatenation of good lines plus the line $\gamma$, all of which are known to be in $\B_\na(\Lambdapp)$. Applying Lemma~\ref{LemmaConcatenation} we therefore have $\nu \in \B_\na(\Lambdapp)$. Applying induction on height it follows that $\nu$ is good, and so again $\gamma$ is good by Proposition~\ref{PropStillClosed}.

The induction step breaks into two major cases, depending on whether or not the stratum of the same height as $\gamma$ is \noneg\ or \eg. For the case of an \noneg\ stratum we will use the following:

\begin{lemma}\label{LemmaInverseIsPrincipal} Suppose that $\phi, \psi=\phi^{-1} \in \Out(F_n)$ are rotationless, that $\fG$ is a \ct\ representing $\phi$, that $E_s$ is the unique edge in an \noneg\ stratum $H_s$, and that both endpoints of $E_s$ are contained in $G_{s-1}$. Let $\wt E_s$ be a lift of $E_s$, let $\ti f: \wt G \to \wt G$ be the  lift of $f$ that fixes the initial endpoint of $\wt E_s$ and let $\Phi$ be the  automorphism corresponding to $\ti f$.   Then $\Psi = \Phi^{-1}$  is principal. Moreover there is a line $\sigma \in \B_\sing(\psi)$ that has height $s$, that crosses  $E_s$ exactly once and that lifts to  a line with endpoints in $\Fix_N(\Psi)$. 
\end{lemma}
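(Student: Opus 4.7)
The plan is to use the \ct\ (NEG) axioms to locate a Nielsen loop at $w$, observe that this gives a nontrivial element of $\Fix(\Phi)=\Fix(\Psi)$, deduce that $\Phi$ is principal for $\phi$ because $v$ is a principal vertex of $f$, invoke the symmetry between principal lifts of $\phi$ and its rotationless inverse $\psi$ to conclude $\Psi\in P(\psi)$, and then concatenate two $\psi$-principal rays through $E_s$ to build the desired line $\sigma$.

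First I would extract the local form of $f$ near $E_s$: since $H_s=\{E_s\}$ is an \neg\ stratum with both endpoints $v=\mathrm{init}(E_s)$, $w=\mathrm{term}(E_s)$ lying in $G_{s-1}$, the \ct\ axioms (in particular (Linear Edges)) give $f(v)=v$, $f(w)=w$, and $f(E_s)=E_s\cdot u_s$ where $u_s$ is a (possibly trivial) closed Nielsen path based at $w$, contained in $G_{s-1}$, with $f_\#(u_s)=u_s$; moreover both $v$ and $w$ are principal vertices of $f$. Letting $\alpha=[E_s\, u_s\, \bar E_s]\in F_n=\pi_1(G,v)$, a direct computation yields $\Phi(\alpha)=(E_s u_s)(u_s)(\bar u_s\bar E_s)=\alpha$, so $\alpha\in\Fix(\Phi)=\Fix(\Psi)$, whence $\alpha^{+\infty},\alpha^{-\infty}\in\bdy\Fix(\Psi)\subset\Fix_N(\Psi)$.

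Next, because $v$ is a principal vertex of $f$ and $\ti f$ fixes $\wt v$, the automorphism $\Phi$ is a principal automorphism of $\phi$ as defined in Section~\refGM{SectionPrincipalRotationless}; the standard duality between principal lifts of rotationless $\phi$ and those of its rotationless inverse $\psi$ established in that section then forces $\Psi=\Phi^{-1}\in P(\psi)$.

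To produce $\sigma$, since both $v$ and $w$ are principal vertices of any \ct\ representing $\psi$, the principal lift $\Psi$ determines $\psi$-principal rays $R_v, R_w$ emanating from $v,w$ respectively and realized in $G_{s-1}$, with lifts $\wt R_v$ based at $\wt v$ and $\wt R_w$ based at $\wt w$ whose ideal endpoints lie in $\Fix_+(\wh\Psi)\subset\Fix_N(\Psi)$. Setting $\sigma=\bar R_v\cdot E_s\cdot R_w$ gives a line of height $s$ that crosses $E_s$ exactly once, with lift $\bar{\wt R}_v\cdot\wt E_s\cdot\wt R_w$ having endpoints in $\Fix_N(\Psi)$, so $\sigma\in\B_\sing(\psi)$. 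The hardest step will be this last one, where one must identify the $\psi$-principal rays at $v$ and $w$, verify that they lie in $G_{s-1}$ (so that $\sigma$ crosses $E_s$ exactly once), and carefully track the correspondence between principal vertices of the \ct\ for $\phi$ and that for $\psi$ in order to conclude that the lifts of these rays are compatible with our specific $\Psi=\Phi^{-1}$.
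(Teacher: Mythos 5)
The first gap is where you produce a nontrivial element of $\Fix(\Phi)$. The hypothesis is only that $H_s$ is an NEG stratum, and (Linear Edges) applies only to linear edges: for a general non-fixed, non-linear NEG edge the \ct\ axioms give $f(E_s)=E_s\cdot u_s$ with $u_s$ a nontrivial closed path in $G_{s-1}$ that is \emph{not} a Nielsen path, so $\Phi(\alpha)=[E_s\,u_s\,f_\#(u_s)\,\bar u_s\,\bar E_s]\ne\alpha$ and your fixed element $\alpha$ disappears. Even in the fixed or linear case, the two boundary points you obtain, $\alpha^{\pm\infty}$, are the endpoints of an axis, and by the definition of principal automorphism a pair of axis endpoints never certifies principality, so this step cannot by itself show $\Psi\in P(\psi)$.

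The second, more serious gap is the appeal to a ``duality'' $\Phi\in P(\phi)\Rightarrow\Phi^{-1}\in P(\psi)$, together with the unproved claim that a lift fixing a lift of a principal vertex is automatically a principal lift. The set $\Fix_N$ is not inversion-invariant --- attractors for $\wh\Phi$ are repellers for $\wh\Psi$ --- and no such duality is recorded in Section~\refGM{SectionPrincipalRotationless} or used elsewhere in this paper; indeed the whole point of this lemma is to prove principality of this particular inverse, so invoking such a statement is circular in spirit. Relatedly, your construction of $\sigma$ assumes $\Psi$ has ``principal rays emanating from $v,w$ and realized in $G_{s-1}$''; principal rays are defined in a \ct\ for $\psi$, not in $G$, and you give no reason why points of $\Fix_N(\wh\Psi)$ can be found in the boundaries of the two components of the preimage of $G_{s-1}$ adjacent to $\wt E_s$, which is exactly what is needed to make $\sigma$ cross $E_s$ exactly once. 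The paper supplies both missing ingredients at once: the components $\wt C_1,\wt C_2$ of the full preimage of $G_{s-1}$ containing the endpoints of $\wt E_s$ are noncontractible and $\ti f$-invariant, hence determine $\Psi$-invariant free factors $B_1,B_2$ with $\bdy B_j=\bdy\wt C_j$; Fact~\refGM{FactPeriodicNonempty} applied to $\Psi\restrict B_j$ yields points $P_j\in\Fix_N(\wh\Psi^m)\cap\bdy\wt C_j$; the line from $P_1$ to $P_2$ crosses $E_s$ exactly once, hence is not birecurrent, hence is neither an axis nor a generic leaf, so $\Psi^m$ is principal, and rotationlessness of $\psi$ then gives $\Psi\in P(\psi)$ and $\sigma\in\B_\sing(\psi)$. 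You would need to replace your principality and ray arguments by something of this kind.
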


\begin{proof} By Fact~\refGM{FactContrComp} and Definition~\trefGM{DefCT}{ItemZeroStrata}, no component of $G_{s-1}$ is contractible. Letting $\wt C_1, \wt C_2 \subset \wt G$ be the components of the full pre-image of $G_{s-1}$ that contain the initial and terminal endpoints of $\wt E_s$ respectively,  there are nontrivial free factors $B_1,B_2$ that satisfy $\bdy B_j = \bdy \wt C_j$. Each of $\wt C_1, \wt C_2$ is preserved by $\ti f$ and so each of $B_1,B_2$ is $\Psi$-invariant. By Fact~\refGM{FactPeriodicNonempty} applied to $\Psi \restrict B_j$, there exists $m > 0$ and points $P_j \in \Fix_N(\wh\Psi^m) \cap \partial \wt C_j$ for $j=1,2$. Since the line  $\ti \sigma$ connecting $P_1$ to $P_2$ is not birecurrent it does not project to either an axis or a  generic leaf of some element of $\L(\phi^{-1})$.   Thus  $\Psi^m \in P(\psi)$.  Since $\psi$ is rotationless,   $\Psi \in P(\psi)$ and $\sigma \in \B_\sing(\psi)$.
\end{proof}

Fix now $s \ge r$ and assume as an induction hypothesis that all lines in $\B_\na(\Lambdapp)$ of height $<s$ are in $\B_\good(\Lambdapmp;\psi)$. Fix a line $\gamma \in \B_\na(\Lambdapp)$ of height $\le s$. Let $\ti \gamma$ be a lift of $\gamma$ and let $P$ and $Q$ be its initial and terminal endpoints respectively. 

\paragraph{Case 1:  $H_s$ is \noneg.} Let $E_s$ be the unique edge in $H_s$. If $E_s$ is closed and $\gamma$ is a bi-infinite iterate of $E_s$ then $E_s \subset Z$ and $\gamma \in \<Z,\hat\rho_r\>$ so $\gamma \in \B_\ext(\Lambdapmp;\psi)$. We may therefore assume that both endpoints of $E_s$ belong to $G_{s-1}$.

Orient $E_s$ so that its initial direction is fixed. Recall (Lemma 4.1.4 of \BookOne) that for each occurrence of $E_s$ or $\overline E_s$ in the representation of $\gamma$ as an edge path, the line $\gamma$ splits at the initial vertex of $E_s$, and we refer to this as the \emph{highest edge splitting vertex} determined by the occurrence of $E_s$. We also use this terminology for lifts of $E_s$ in the universal cover. By Fact~\refGM{FactPrincipalVertices}, highest edge splitting vertices are principal.

\paragraph{Case 1A:  Both ends of $\gamma$ have height $s$.}  In this case $\gamma$ has a splitting in which each term is finite.  Since $\gamma$ is not weakly attracted to $\Lambda^+$, neither is any of the terms in the splitting.  Lemma~\ref{defining Z}~\pref{ItemZPFinitePaths} implies that each term  is contained in $ \<Z,\hat \rho_r\>$ and so $\gamma$ is contained in $ \<Z,\hat \rho_r\>$ and we are done. 

\paragraph{Case 1B:  Exactly one end of $\gamma$ has height $s$.} We assume without loss that the initial end of $\gamma$ has height $s$. Pick a lift $\ti\gamma$, let $\wt E_s$ be the last lift of $E_s$ crossed by $\ti\gamma$, let $\ti x \in \ti\gamma$ be the highest edge splitting vertex determined by $\wt E_s$, and let $\ti \gamma = \wt R_-^\inv  \cdot \wt R_+$\ be the splitting at~$\ti x$. The ray $\wt R_-$ has height $s$ and crosses lifts of $E_s$ infinitely often, and as in case 1A the projected ray $R_-$ is contained in $\<Z,\hat\rho_r\>$. It follows that there exists a  subgroup $A \in \A_\na(\Lambdapp)$ such that $P \in \bdy A$. Let $\ti f$ be the lift of $f$ that fixes $\ti x$ and  let $\Phi$ be the corresponding element of $P(\phi)$. Lemma~\ref{LemmaInverseIsPrincipal} implies that $\Psi =\Phi^{-1} \in P(\psi)$. 

We claim that $A$ is $\Phi$-invariant. By Lemma~\ref{LemmaZPClosed}~\pref{item:UniqueLift} it suffices to show that $\wh\Phi(\partial A) \cap \partial A \ne \emptyset$.  This is obvious if $P \in \Fix(\wh\Phi)$ so assume otherwise. The ray $f_\#( R_-)$ is contained in $ \<Z,\hat \rho_r\>$ by Lemma~\ref{defining Z}~\pref{ItemZPPathsInv} so  $P$ and $  \wh\Phi(P)$ bound a line that projects into $ \<Z,\hat \rho_r\>$ and so is carried by $\A_{\na}(\phi)$.  Another application of Lemma~\ref{LemmaZPClosed}~\pref{item:UniqueLift} implies that $\wh\Phi(P) \in \partial A$ as desired.  

By Fact~\refGM{FactPeriodicNonempty} applied to $\Psi \restrict A$, the set $\Fix_N(\wh\Psi^m) \cap \partial A$ is nonempty for some $m > 0$.  Since $\psi$ is rotationless and $\Psi \in P(\psi)$, we may take $m=1$, from which it follows that $\Psi$ is $A$-related. By Lemma~\ref{LemmaInverseIsPrincipal}, there exist $P',Q' \in \Fix_N(\Psi)$ so that the line $\ti \sigma = \overline{P' Q'}$ crosses $\wt E_s$ in the same direction as $\ti \gamma$ and crosses no other edge of height $\ge s$, and $\ti\sigma$ projects to $\sigma \in \B_\sing(\psi)$. Let $\ti\sigma = \wt R'_-{}^\inv \cdot \wt R'_+$ be the highest edge splitting determined by $\ti x$. Assuming that $P \ne P'$, the line $\ti\mu = \overline{P P'}$ has endpoints in $\bdy A \union \Fix_N(\Psi)$ and so projects to $\mu \in \B_\ext(\Lambda^+_\phi)$. If $\gamma$ crosses $\wt E_s$ in the backwards direction then $\wt E_s$ is the last edge of both $\wt R_-$ and $\wt R'_-$ and each of $\wt R_+$ and $\wt R'_+$ have height $\le s-1$; otherwise each of $\wt R_+$ and $\wt R'_+$ is a concatenation of $\wt E_s$ followed by a ray of height $\le s-1$. In either case, assuming that $Q \ne Q'$, it follows that the line $\ti\nu = \overline{Q'Q}$ has height $\le s-1$. We therefore have an inductive concatenation $\gamma = \mu \diamond \sigma \diamond \nu$, with $\mu$ omitted when $P=P'$ and $\nu$ omitted when $Q=Q'$, and Case 1B is completed.

\bigskip
 
\paragraph{Case 1C: Neither end of $\gamma$ has height $s$.} We induct on the number $m$ of height $s$ edges in $\gamma$. The base case, where $m=0$, follows from induction on $s$. Let $\ti \gamma = \wt R_-^\inv \cdot \wt R_+$\ be the splitting determined by the last highest edge splitting vertex $\ti x$ in~$\ti \gamma$, let $\ti f$ be the lift of $f$ that fixes $\ti x$, and let $\Phi \in P(\phi)$ correspond to $\ti f$. As in Case~1B, from Lemma~\ref{LemmaInverseIsPrincipal} it follows that $\Psi =\Phi^{-1} \in P(\psi)$ and that there exist $P',Q' \in \Fix_N(\Psi)$ so that the line $\ti \sigma$ connecting $P'$ to $Q'$ crosses the last height $s$ edge of $\ti\gamma$ in the same direction as $\ti \gamma$ and crosses no other edge of height $\ge s$. Let $\ti \sigma = \wt R'_-{}^\inv \cdot \wt R'_+$ be the highest edge splitting determined by $\ti x$. The line $\mu_1 = \overline{P P'}$ is obtained by tightening $\wt R_-^\inv \wt R'_-$, and the line $\mu_2 = \overline{Q' Q}$ is obtained by tightening  and $\wt R'_+{}^\inv \wt R_+$. These lines have height $\le s$, cross fewer than $m$ edges of height $s$, and are not weakly attracted to~$\Lambdapp$ by Lemma \ref{LemmaConcatenation}, because the rays $R_- ,R_+, R'_-$ and $R_+'$ are not weakly attracted to~$\Lambdapp$. By induction on $m$ we have $\mu_1,\mu_2 \in \B_\good(\Lambdapmp;\psi)$. Since $\sigma \in \B_\sing(\psi)$, it follows that $\gamma = \mu_1 \diamond \sigma \diamond \mu_2 \in \B_\good(\Lambdapmp;\psi)$, completing Case 1C.

\paragraph{Case 2: $H_s$ is \eg.} Let $\Lambda^+_s \in \L(\phi)$ be the lamination associated to $H_s$ with dual lamination denoted $\Lambda^-_s \in \L(\psi)$. Applying Theorem~\refGM{TheoremCTExistence} with $\C$ being $[G_r] \sqsubset [G_s]$, let $f' \from G' \to G'$ be a \ct\ representing $\psi$ with \eg\ stratum $H'_{r'}$ associated to $\Lambda^-_\phi$ and \eg\ stratum $H'_{s'}$ associated to $\Lambda_s^-$ so that $[G_r] = [G'_{r'}]$ and $[G_s] = [G'_{s'}]$. Let $\gamma'$ be the realization of $\gamma$ in~$G'$, a line of height~$s'$. Using the $F_n$-equivariant identification $\bdy\wt G_s \approx \bdy \wt G'_{s'}$, there is a lift $\ti\gamma'$ of $\gamma'$ with endpoints $P,Q$.

\paragraph{Case 2A: $\gamma$ is not weakly attracted to $\Lambda^+_s$.} This is the case where we apply Lemmas~\ref{nonGeometricFullHeightCase} and \ref{geometricFullHeightCase}. 
In the situation where $\gamma'$ is a singular line of $\psi$ or a generic leaf of $\Lambda^-_s$, or in the geometric situation where $\gamma'$ is a bi-infinite iterate of the height $s'$ closed indivisible Nielsen path $\rho'_{s'}$, we have $\gamma' \in \B_\good(\Lambdapmp;\psi)$ and we are done. The situation where $\gamma'$ is a singular line of $\psi$ includes all cases of Lemmas~\ref{nonGeometricFullHeightCase} and \ref{geometricFullHeightCase} where $\gamma' = R_-^\inv \mu R_+$, each of $R_-,R_+$ is either a height $s'$ principal ray or a singly infinite iterate of a height $s'$ closed indivisible Nielsen path, and $\mu$ is either trivial or a height $s'$ Nielsen path. We may therefore assume that none of these situations occurs. In all remaining situations, we divide into two subcases depending on whether one or two ends of $\gamma'$ have height $s'$.

Consider first the subcase where only one end of $\gamma'$, say the initial end, has height $s'$. By applying Lemma~\ref{nonGeometricFullHeightCase}~\pref{ItemNGFHCOneRay} or Lemma~\ref{geometricFullHeightCase}~\pref{ItemGFHCOneRay} we obtain a decomposition $\gamma' =  R_-^\inv \mu R_+$ where $R_-$ is a height $s'$ principal ray, $\mu$ is either a trivial path or a height $s'$ Nielsen path, and the ray $R_+$ has height $<s'$. Lifting the decomposition of $\gamma'$ we obtain a decomposition $\ti\gamma' =  \wt R_-^\inv \ti\mu \wt R_+$ where $\wt R_-$, $\wt R_+$ have endpoints $P,Q$. Let $x$ be the initial point of $R_+$, lifting to the initial point $\ti x$ of $\wt R_+$. The component $\Gamma$ of the full pre-image of $G'_{s'-1}$ that contains $\ti x$ is $\ti f'$-invariant and infinite and so there is a free factor $B$ such that $Q \in \partial B = \partial \Gamma$.  Since $\Psi$ is principal, Lemma~\refGM{FactPeriodicNonempty} implies the existence of $Q' \in \Fix_N(\wh\Psi) \cap \partial \Gamma$. The line $\ti \tau'$  connecting $P$ to $Q'$ projects to $\tau' \in \B_\sing(\psi)$; let $\tau$ be the realization of $\tau'$ in $G$. The line $\ti\nu' = \ti\tau'{}^\inv \diamond \ti\gamma'$ is contained in $\Gamma$ and so projects to a line $\nu' = \tau'{}^\inv \diamond \gamma'$ of height $<s'$ whose realization in $G$ is a line $\nu$ of height $< s$. We obtain an inductive concatenation $\gamma = \tau \diamond \nu$, completing the first subcase of Case 2A.

Consider next the subcase where both ends of $\gamma'$ have height $s'$. Applying Lemma~\ref{nonGeometricFullHeightCase}~\pref{ItemIntermediatePath} or Lemma~\ref{geometricFullHeightCase}~\pref{ItemGFHCTwoRays}, and keeping in mind the situations that we have assumed not to occur, there is a decomposition $\gamma' = R_1^\inv \mu R_2$ where $R_1$, $R_2$ are both height $s'$ principal rays, and $\mu$ has one of the forms $\beta$, $\alpha\beta$, $\beta\bar\alpha$, $\alpha\beta\bar\alpha$ where $\beta$ is a nontrivial path of height $<s'$ and $\alpha$ (if it occurs) is a height $s'$ nonclosed indivisible Nielsen path oriented to have initial vertex in the interior of $H'_{s'}$ and terminal vertex in $G'_{s'-1}$. Absorbing occurrences of $\alpha$ into the incident principal rays $R_1,R_2$, we obtain rays $R_-,R_+$ containing $R_1,R_2$ respectively, and a decomposition $\gamma' = R_-^\inv \beta R_+$ which lifts to a decomposition $\ti\gamma' = \wt R_-^\inv \ti\beta \wt R_+$ where $\wt R_-$ has endpoint $P$ and $\wt R_+$ has endpoint $Q$. Let $\ti x$ be the initial point of $\wt R_-$. There is a principal lift $\ti f' \from \wt G' \to \wt G'$ with associated $\Psi \in P(\psi)$ such that $\wt R_1$ is a principal ray for $\ti f'$ fixing the initial point $\ti y$ of $\wt R_1$. Since either $\ti x=\ti y$ or the segment $[\ti x,\ti y]$ is a lift of $\alpha$, it follows that $\ti f'$ fixes $\ti x$ and that $\ti f'_\#(\wt R_-) = \wt R_-$. As in the previous subcase there is a ray based at $\ti x$ with height $< s'$ and terminating at some $Q' \in \Fix_N(\wh\Psi)$. The line $\ti \tau'$ connecting $P$ to $Q'$ projects to $\tau' \in \B_\sing(\psi)$ which is good, and the line $\ti\sigma' = \ti \tau'{}^\inv \diamond \gamma' $ has only one end with height $s'$. By the previous subcase, the realization $\sigma$ of $\bar \tau\diamond \gamma$ in $G$ is good and hence $\gamma =  \tau \diamond\sigma$ is good.
 
\paragraph{Case 2B:  $\gamma$ is weakly attracted to $\Lambda^+_s$.} In this case $H_s \subset Z$, for otherwise $\gamma$ is weakly attracted to $\Lambda^+_\phi$ as well, contrary to hypothesis.

\subparagraph{Special case:} We first consider the special case that $\gamma$ decomposes at a fixed vertex $v$ into two rays $\gamma = \gamma_1 \gamma_2$ so that $\gamma_1$ has height $<s$ and $\gamma_2 \in \<Z,\hat\rho_r\>$. In $\wt G$ there is a corresponding decomposition $\ti\gamma = \ti\gamma_1 \ti\gamma_2$ at a vertex~$\ti v$, and there is a lift $\ti f$ fixing $\ti v$ with corresponding $\Phi \in \Aut(F_n)$ representing $\phi$. Let $\Psi = \Phi^\inv$. 

Recall the notation established in Definition~\ref{defn:Z} of the graph immersion $h \from K \to G$ used to define $\A_\na(\Lambdapp)$. Since the ray $\gamma_2$ is an element of the path set $\<Z,\hat\rho_r\>$, it follows from Definition~\ref{defn:Z} that $\gamma_2$ lifts via the immersion $h \from K \to G$ to a ray in the finite graph $K$. The image of this lifted ray must therefore be contained in a noncontractible component $K_0$ of~$K$. There is a lift of universal covers $\ti h \from \wt K_0 \to \wt G$ such that $\ti h(\wt K_0)$ contains $\ti\gamma_2$ and such that the stabilizer of $\ti h(\wt K_0)$ is a subgroup $A \in \A_\na(\Lambdapmp)$ whose conjugacy class is the one determined by the immersion $h \from K_0 \to G$. By construction we have $Q \in \bdy A$. If $\wh\Phi(Q) \ne Q$ then $Q$ and $\wh\Phi(Q)$ bound a line that projects into $\<Z,\hat \rho_r\>$ and so is carried by $ \A_{\na}(\Lambdapmp)$, and by applying Lemma~\ref{LemmaZPClosed}~\pref{item:UniqueLift} it follows that $\wh Q \in \bdy A$; this is also true if $\wh\Phi(Q) = Q$. In particular $\Phi$, and therefore also $\Psi$, preserves~$A$.  By Fact~\refGM{FactPeriodicNonempty} applied to $\Psi \restrict A$ there exists an integer $q \ge 1$ so that $\Fix_N(\wh\Psi^q) \cap \partial A \ne \emptyset$; we choose $q$ to be the minimal such integer and then we choose $Q' \in \Fix_N(\wh\Psi^q) \cap \partial A$. If $Q \ne Q'$ then the line $\beta$ connecting $Q$ to $Q'$ is carried by $A$ and so $\beta \in \B_\good(\Lambdapmp;\psi)$. 
      
The component $C$ of $G_{s-1}$ that contains the ray $\gamma_1$ is noncontractible, and letting $\wt C$ be the component of the full pre-image of $C$ that contains $\gamma_1$, the stabilizer of $\wt C$ is a nontrivial free factor $B$ such that $\partial B = \partial \wt C$. By construction we have $P \in \bdy B$. Also $\wt C$ is invariant under $\ti f$ and so $B$ is invariant under $\Psi$. By Fact~\refGM{FactPeriodicNonempty} applied to $\Psi \restrict B$ there exists an integer $p \ge 1$ so that $\Fix_N(\wh\Psi^p) \intersect \bdy B \ne \emptyset$; we choose $p$ to be the minimal such integer and then we choose $P' \in \Fix_N(\wh\Psi^p) \cap \partial B$. If $P \ne P'$ then the line $\nu$ connecting $P$ to $P'$ has height~$<s$. 

For some least integer $m > 0$ we have $P',Q' \in \Fix_N(\Psi^m)$. If $P' \ne Q'$, consider the line $\mu$ connecting $P'$ to $Q'$. By hypothesis $\psi$ is rotationless and so $\Psi$ is principal if and only if $\Psi^m$ is principal. It follows that if $\Psi$ is principal then $m=1$ and $\mu \in \B_\sing(\psi)$, whereas if $\Psi$ is not principal then $\Fix_N(\Psi^m) =\{P',Q'\}$ so $m = p = q = 1$ or $2$ and either $\mu \in \B_\gen(\psi)$ or $\mu$ is a periodic line corresponding to a conjugacy class that is invariant under $\phi^2$. In all cases, $\mu \in \B_\good(\Lambdapmp;\psi)$. 

We therefore have an inductive concatenation of the form $\gamma = \nu \diamond \mu \diamond \bar\beta$, where $\nu$ is omitted if $P=P'$, $\mu$ is omitted if $P'=Q'$, and $\bar\beta$ is omitted if $Q'=Q$, but at least one of them is not omitted because $P \ne Q$. This completes the proof in the special case.

\medskip

\subparagraph{General case.} First we reduce to the subcase that $\gamma$ has a subray of height $s$ in~$\<Z,\hat\rho_r\>$. To carry out this reduction, after replacing $\gamma$ with some $\phi^k_\#(\gamma)$ we may assume that $\gamma$ contains a long piece of $\Lambda^+_s$ and so has a splitting $\gamma = R_- \cdot E \cdot R_+$ where $E$ is an edge of $H_s$ whose initial vertex and initial direction are principal. Lifting this splitting we have $\ti \gamma = \wt R_- \cdot \wt  E \cdot \wt R_+$. Let $\ti f$ be the principal lift that fixes the initial vertex of $\wt E$ and let $\wt R'$ be the principal ray determined by the initial direction of $\wt E$. Neither the line $R_- R'$ nor the line obtained by tightening $\bar R_+ R'$ is weakly attracted to $\Lambda^+_r$, because $\wt R_-$ and $\wt R_+$ are not weakly attracted and the ray $R'$ is contained in $\<Z,\hat \rho_r\>$. Each of these lines contains a subray of~$R'$, and any subray of $R'$ contains a further subray of height $s$ in $\<Z,\hat\rho_r\>$, and so it suffices to show that each of these lines is contained in $\B_\good(\phi)$, which completes the reduction.

Let $t$ be the highest integer in $\{r,\ldots,s-1\}$ for which $H_t$ is not contained in $Z$. Using that $\gamma$ has a subray of height $s$ in $\<Z,\hat\rho_r\>$, after making it a terminal subray by possibly inverting $\gamma$, there is a decomposition $\gamma = \ldots \nu_2\mu_1\nu_1 \mu_0$ into an alternating concatenation where the $\mu_l$'s are the maximal subpaths of $\gamma$ of height $>t$ that are in $\<Z,\hat \rho_r\>$, and the $\nu_l$'s are the subpaths of $\gamma$ that are complementary to the $\mu_l$'s. Each subpath $\nu_l$ has fixed endpoints, is contained in $G_t$, and is not an element of $\<Z,\hat \rho_r\>$. Further, $\nu_l$ is finite unless the decomposition of $\gamma$ is finite and $\nu_l$ is the leftmost term of the decomposition. Since $H_t$ is not a zero stratum, each component of $G_t$ is non-contractible and hence $f$-invariant. We prove that the above decomposition of $\gamma$ is finite by assuming that it is not and arguing to a contradiction. 
       
We claim that for all $l$ and all $m \ge 1$ the following hold:
\begin{enumerate}
\item If $\nu_l$ is finite, not all of $f^m_\#(\nu_l)$ is cancelled when $f^m_\#(\mu_{l})f^m_\#(\nu_l)f^m_\#( \mu_{l-1})$ is tightened to $f^m_\#(\mu_{l}\nu_l \mu_{l-1})$. Moreover, as $m \to \infty$ the part of $f^m_\#(\nu_l)$ that is not cancelled contains subpaths of $\Lambda^+_\phi$ which cross arbitrarily many edges of $H_r$.
\item Not all of $f^m_\#(\mu_l)$ is cancelled when $f^m_\#(\nu_{l+1})f^m_\#(\mu_l)f^m_\#(\nu_l)$ is tightened to $f^m_\#(\nu_{l+1}\mu_l \nu_l)$.  
\end{enumerate}        
Assuming without loss of generality that $m$ is large, (1) follows from finiteness of the path~$\nu_l$ by applying Lemma~\ref{defining Z}~\pref{ItemZPFinitePaths} which implies that the path $f^m_\#(\nu_l)$ contains subpaths of~$\Lambda^+_\phi$ that cross arbitrarily many edges of $H_r$, whereas $f^m_\#(\mu_l)$ and $f^m_\#(\mu_{l-1})$ contain no such subpaths. Item (2) follows from the fact that each component of $G_t$ is $f$-invariant which implies that $f^m_\#(\nu_{l+1}\mu_l \nu_l) \not \subset G_t$.

Items (1) and (2) together imply that if $\nu_l$ is finite, the only cancellation that occurs to $f_\#^m(\nu_{l})$ when the concatenation $\ldots f_\#^m(\nu_2) f_\#^m(\mu_1)  f_\#^m(\nu_1)  f_\#^m( \mu_0)$ is tightened to $f^m_\#(\gamma)$ is that which occurs when the subpath $f_\#^m(\mu_l) f_\#^m(\nu_{l}) f_\#^m(\mu_{l-1})$ is tightened to $f^m_\#(\mu_{l}\nu_l \mu_{l-1})$. But then $f^m_\#(\gamma)$ contains subpaths of a generic leaf of $\Lambda^+_\phi$ that cross arbitrarily many edges of $H_r$, in contradiction to the assumption that $\gamma$ is not weakly attracted to $\Lambda^+_\phi$.   
  
Not only have we shown that the decomposition of $\gamma$ is finite, we have shown that no $\nu_l$ term of the decomposition can be finite, and so either $\gamma = \mu_0$ or $\gamma = \nu_1\mu_0$. If $\gamma=\mu_0$ then $\gamma \in \<Z,\hat \rho_r\>$ and we are done. If $\gamma = \nu_1\mu_0$ then $\gamma$ falls into the special case and we are also done. This completes the proof of Theorem~\ref{ThmRevisedWAT}.

\bibliographystyle{amsalpha} 
\bibliography{mosher} 
 
 \printindex
 
 \end{document}